\documentclass[a4paper]{amsart}
\usepackage{a4wide}
\usepackage{amsmath,amssymb,amsthm,mathtools}
\usepackage{xcolor}
\usepackage{needspace}
\definecolor{refblue}{RGB}{0,70,160}
\usepackage[colorlinks=true,linkcolor=refblue,citecolor=refblue,urlcolor=refblue]{hyperref}
\hypersetup{pdftitle={Completely symmetric subspaces of noncommutative Lp-spaces},
pdfauthor={Jinghao Huang},
pdfsubject={The Junge--Xu selection problem for exponents above two}}
\newtheorem{theorem}{Theorem}[section]
\newtheorem{lemma}[theorem]{Lemma}
\newtheorem{corollary}[theorem]{Corollary}

\theoremstyle{remark}

\title{Completely symmetric subspaces of noncommutative $L_p$-spaces}
\author[J. Huang]{Jinghao Huang}
\thanks{
This work was supported by the NNSF of China
(Nos. 12301160, 12471134 and 12671159).
}
\address{Institute for Advanced Study in Mathematics of HIT,
Harbin 150001, China}
\email{jinghao.huang@hit.edu.cn}
\subjclass[2020]{Primary 46L52; Secondary 46L07, 46B15}
\keywords{Noncommutative $L_p$-space, completely symmetric basic sequence,
completely complemented subspace, weakly null sequence, operator space}
\begin{document}
\begin{abstract}
Let $2<p<\infty$. We prove that every infinite-dimensional closed
subspace $F$ of a noncommutative $L_p$-space contains a subspace
completely isomorphic to $\ell_p$, $C_p$, $R_p$, or $C_p\cap R_p$
and completely complemented in $F$. This answers the question of
Junge and Xu on the existence
of completely symmetric basic sequences.
\end{abstract}
\maketitle

\section{Introduction}

We begin with the classical subspace theory of $L_p(0,1)$.
Let $2<p<\infty$. Kadec and Pe\l czy\'nski~\cite[Corollary~5]{KP}
proved that
every normalized weakly null sequence in $L_p(0,1)$ has a subsequence
equivalent to the unit vector basis of either $\ell_p$ or $\ell_2$.
Their  result also gives the following alternative for every
infinite-dimensional closed subspace $X\subset L_p(0,1)$
\cite[Corollary~2]{KP}:
\[
 X\cong\ell_2,
 \qquad\text{or}\qquad
 X\text{ contains a copy of }\ell_p\text{ complemented in }L_p(0,1).
\]

Noncommutative versions of this dichotomy
were obtained by Sukochev~\cite[Theorem~2.4]{Sukochev1996} for finite
von Neumann algebras, by Randrianantoanina~\cite[Theorem~3.1]{RandEmb}
in the semifinite setting, and by Raynaud and Xu~\cite[Theorem~0.2]{RX}
for arbitrary von Neumann algebras. For a general von Neumann algebra
$\mathcal M$ and $2<p<\infty$, every closed subspace $X\subset L_p(\mathcal M)$ is
either isomorphic to a Hilbert space and complemented in $L_p(\mathcal M)$,
or contains a subspace isomorphic to $\ell_p$ and complemented in
$L_p(\mathcal M)$.

Rosenthal~\cite{Rosenthal1970} studied the
subspaces generated by
independent mean-zero random variables. For $2<p<\infty$, he
classified the possible isomorphism types of their infinite-dimensional
closed spans as
\[
 \ell_p,\qquad \ell_2,\qquad \ell_p\oplus\ell_2,\qquad X_p,
\]
where $X_p$ is the space introduced in that paper. 

Johnson, Maurey, Schechtman and Tzafriri developed the
finite-dimensional theory of symmetric sequences in $L_p(0,1)$
\cite[Theorem~1.14]{JMST}; see also the quantitative formulation
\cite[Corollary~6.7(i)]{JX}.
In the range
$2<p<\infty$, their estimates have the following form. If
$(x_1,\ldots,x_N)$ is a finite symmetric basic sequence in
$L_p(0,1)$ with symmetry constant at most $C$, then there are numbers
$\alpha,\beta\ge0$ and a constant $K=K(p,C)$, independent of $N$,
such that
\[
 \begin{aligned}
 K^{-1}\max\left\{\alpha\Big(\sum_{k=1}^N|c_k|^p\Big)^{1/p},
                    \beta\Big(\sum_{k=1}^N|c_k|^2\Big)^{1/2}\right\}
 &\le \left\|\sum_{k=1}^N c_kx_k\right\|_p\\
 &\le K\max\left\{\alpha\Big(\sum_{k=1}^N|c_k|^p\Big)^{1/p},
                    \beta\Big(\sum_{k=1}^N|c_k|^2\Big)^{1/2}\right\}
 \end{aligned}
\]
for every $(c_1,\ldots,c_N)\in\mathbb C^N$.
In the noncommutative setting,
the corresponding operator-space estimates involve both column and row
square functions; see~\cite[Theorem~6.2]{JX}.

We now fix the operator-space notation used in the paper. Let
$S_p\coloneqq S_p(\ell_2)$ be the Schatten $p$-class, and let $(e_{ij})$ be
the usual matrix units on $\ell_2$. Define
\[
 C_p\coloneqq\overline{\operatorname{span}}\{e_{k1}:k\in\mathbb N\}\subset S_p,
 \qquad
 R_p\coloneqq\overline{\operatorname{span}}\{e_{1k}:k\in\mathbb N\}\subset S_p.
\]
For a finite family $a=(a_1,\ldots,a_N)\in \mathbb{M}_m^N$, the matrix norms
of their canonical bases are
\begin{equation}\label{eq:column-row-amplification}
\begin{aligned}
 \left\|\sum_{k=1}^N a_k\otimes e_{k1}\right\|_{S_p^m[C_p]}
 &=\left\|\left(\sum_{k=1}^N a_k^*a_k\right)^{1/2}\right\|_{S_p^m},\\
 \left\|\sum_{k=1}^N a_k\otimes e_{1k}\right\|_{S_p^m[R_p]}
 &=\left\|\left(\sum_{k=1}^N a_ka_k^*\right)^{1/2}\right\|_{S_p^m}.
\end{aligned}
\end{equation}
Define
\begin{equation}\label{eq:intersection-definition}
 C_p\cap R_p\coloneqq
 \overline{\operatorname{span}}\{(e_{k1},e_{1k}):k\in\mathbb N\}
 \subset C_p\oplus_\infty R_p.
\end{equation}
Writing $e_k\coloneqq(e_{k1},e_{1k})$ in this space, its matrix norms are
\begin{equation}\label{eq:intersection-matrix-norm}
 \left\|\sum_{k=1}^N a_k\otimes e_k\right\|_{\mathbb M_m[C_p\cap R_p]}
 =\max\left\{
 \left\|\sum_{k=1}^N a_k\otimes e_{k1}\right\|_{\mathbb M_m[C_p]},
 \left\|\sum_{k=1}^N a_k\otimes e_{1k}\right\|_{\mathbb M_m[R_p]}
 \right\}.
\end{equation}
We also write
\begin{equation}\label{eq:rho-definition}
 \rho_p(a)\coloneqq\max\left\{
 \left\|\left(\sum_{k=1}^N a_k^*a_k\right)^{1/2}\right\|_{S_p^m},
 \left\|\left(\sum_{k=1}^N a_ka_k^*\right)^{1/2}\right\|_{S_p^m}
 \right\}.
\end{equation}
For $\ell_p$, the corresponding matrix norm is
\[
 \left\|\sum_{k=1}^N a_k\otimes e_k\right\|_{S_p^m[\ell_p]}
 =\left(\sum_{k=1}^N\left\|a_k\right\|_{S_p^m}^p\right)^{1/p}.
\]
These are the standard matrix norm descriptions of the spaces above;
see~\cite[Section~6, immediately before Corollary~6.3]{JX}. 

Let $\mathcal M$ be a von Neumann algebra, let $2<p<\infty$, and let
$(x_n)\subset L_p(\mathcal M)$ be a basic sequence. Following the
notion introduced by Junge and Xu
\cite[Section~6, immediately preceding Corollary~6.4]{JX}, we call
$(x_n)$ completely symmetric if there is a constant $C\ge1$ such that
\[
 \left\|\sum_{k=1}^N\varepsilon_k a_{\pi(k)}\otimes x_k\right\|_p
 \le C\left\|\sum_{k=1}^N a_k\otimes x_k\right\|_p
\]
for every $m,N\in\mathbb N$, every
$(a_1,\ldots,a_N)\in \mathbb{M}_m^N$, every
$(\varepsilon_1,\ldots,\varepsilon_N)\in\{-1,1\}^N$, and every
permutation $\pi$ of $\{1,\ldots,N\}$. Both norms are taken in
$L_p(\mathbb{M}_m\overline\otimes\mathcal M)$, and $C$ is independent of
$m,N,(a_k),(\varepsilon_k)$, and $\pi$.

We now recall the Schatten amplification norms and the completely bounded
norm. For an operator space $E$, we use the standard vector-valued
Schatten space
$S_p^m[E]\coloneqq(\mathbb M_m[E],S_1^m\widehat\otimes E)_{1/p}$,
where $\widehat\otimes$ denotes the operator-space projective tensor
product and $(\cdot,\cdot)_{1/p}$ denotes complex interpolation.
If $E\subset L_p(\mathcal M)$ is closed and carries the induced
operator-space structure, then for $z\in\mathbb M_m\odot E$ we have
\cite[Section~6, before Lemma~6.1]{RX}
\[
 \left\|z\right\|_{S_p^m[E]}=\left\|z\right\|_{L_p(\mathbb{M}_m\overline\otimes\mathcal M)},
 \qquad
 S_p^m[E]=\overline{\mathbb{M}_m\odot E}^{\,L_p(\mathbb{M}_m\overline\otimes\mathcal M)}.
\]
Here $\mathbb{M}_m\odot E$ denotes the algebraic tensor product.
For arbitrary operator spaces $E,F$ and a linear map $T:E\to F$, the completely
bounded norm is given by Pisier's characterization
\cite[Lemma~1.7, formula~$(1.7)'$, p.~23]{PisierVector}:
\begin{equation}\label{eq:cb-amplification-criterion}
 \left\|T:E\to F\right\|_{\mathrm{cb}}
 =\sup_{m\in\mathbb N}
 \left\|\operatorname{id}_{\mathbb{M}_m}\otimes T:
       S_p^m[E]\longrightarrow S_p^m[F]\right\|.
\end{equation}
A projection $P:F\to F$ is said to be completely bounded when
$\left\|P\right\|_{\mathrm{cb}}<\infty$.

Junge and Xu~\cite[Corollary~6.8]{JX} proved the classification
of completely symmetric subspaces: if
$X\subset L_p(\mathcal M)$ is infinite-dimensional and has a
completely symmetric basis, then
\[
 X\ \text{is completely isomorphic to one of }
\ell_p,\quad C_p,\quad R_p,\quad C_p\cap R_p.
\]
For $2<p<\infty$, these four spaces are pairwise not completely isomorphic.
Indeed, $\ell_p$ is not Hilbertian, whereas the other three are Hilbertian
and are distinguished by their column and row fundamental functions
\cite[Proposition~6.1]{JXHilbertian}; the fact that $C_p$ and $R_p$ are not
completely isomorphic is also explicitly recalled in \cite[p.~20]{JNO}.
\par
This classification is valid for arbitrary von Neumann algebras. However, it does not by itself produce a
completely symmetric sequence in every infinite-dimensional subspace.

Under the additional assumption that $\mathcal M$ is hyperfinite,
Junge and Xu~\cite[Corollary~6.3]{JX} proved that every
semi-normalized weakly null sequence in $L_p(\mathcal M)$ has a
subsequence completely equivalent to the canonical basis of one of the
four spaces listed above. As a consequence, every infinite-dimensional
subspace contains a completely symmetric basic sequence~\cite[Corollary~6.4]{JX}. Immediately
after \cite[Corollary~6.4]{JX} they stated:
\begin{quote}
It is not known whether the hyperfiniteness assumption can be removed
for $2<p<\infty$.
\end{quote}
The present paper answers this question affirmatively. We prove the
following stronger complementation statement.

\begin{theorem}
\label{thm:general-high-p-relative-complement}
Let $\mathcal M$ be an arbitrary von Neumann algebra and
$2<p<\infty$. Every infinite-dimensional closed subspace
$F\subset L_p(\mathcal M)$ contains an infinite-dimensional
subspace $Z$ which is completely complemented in $F$ and completely
isomorphic to one of
\[
 \ell_p,\qquad C_p,\qquad R_p,\qquad C_p\cap R_p.
\]
\end{theorem}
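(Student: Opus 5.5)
We reduce to the $\sigma$-finite case, extract a well-behaved weakly null sequence in $F$, and analyse it through the noncommutative Rosenthal/Burkholder inequalities for $p>2$.

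\emph{Step 1: reduction and the $\ell_p$ case.} By Haagerup's reduction it suffices to treat $\sigma$-finite $\mathcal M$; if needed we embed $L_p(\mathcal M)$ completely isometrically and completely complementedly into $L_p(\mathcal R)$ for a $\sigma$-finite $\mathcal R$ admitting a trace-like decomposition. Since $L_p(\mathcal M)$ is reflexive, $F$ contains a normalized weakly null sequence $(x_n)$. By the Raynaud--Xu dichotomy \cite[Theorem~0.2]{RX}, either $F$ contains a subspace isomorphic to $\ell_p$ and complemented in $L_p(\mathcal M)$, or $F$ is Hilbertian and complemented.

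In the first case we refine the construction so that the copy of $\ell_p$ is spanned by a perturbation of disjointly supported (in the Kadec--Pe\l czy\'nski sense, $\|x_n\|_p$-concentrated on small projections) pieces. A block sequence $y_n=e_nx_ne_n$ with pairwise orthogonal projections $e_n$ spans a completely isometric copy of $\ell_p$, because
\[
\Bigl\|\sum a_k\otimes y_k\Bigr\|_{S_p^m[L_p]}^p=\sum\|a_k\otimes y_k\|_p^p .
\]
The map $x\mapsto \sum_k e_kxe_k$ is a complete contraction, and composing it with the isomorphism onto $\overline{\operatorname{span}}\{x_n\}$ gives a completely bounded projection, provided $\sum\|x_n-y_n\|_p$ is small. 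This perturbation requires a completely bounded version of the small-perturbation lemma, which is routine.

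\emph{Step 2: the Hilbertian, non-concentrated case.} Here no subsequence of $(x_n)$ concentrates, so after passing to a subsequence and using a noncommutative martingale-difference approximation we may assume $(x_n)$ is, up to small perturbation, a martingale difference sequence with respect to some filtration. (In the non-hyperfinite case we use the Junge--Xu/Randrianantoanina technique of approximating by differences relative to finite-dimensional subalgebras generated by the previous terms, or by free/tensor independence via ultraproducts.) The noncommutative Burkholder inequality for $p>2$ then gives
\[
\Bigl\|\sum a_k\otimes x_k\Bigr\|_p\simeq \max\Bigl\{\Bigl(\sum\|a_k\otimes x_k\|_p^p\Bigr)^{1/p},\ \Bigl\|\Bigl(\sum a_k^*a_k\otimes \mathcal E_{k-1}(x_k^*x_k)\Bigr)^{1/2}\Bigr\|_p,\ \text{row analogue}\Bigr\}.
\]
The diagonal term is dominated in the Hilbertian case. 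By a Ramsey/subsequence argument on the conditional square functions, which become nearly constant after passing to a subsequence by weak$^*$ compactness and a diagonal argument, we identify which of column or row survives. This yields a subsequence completely equivalent to the basis of $C_p$, $R_p$, or $C_p\cap R_p$, and hence completely symmetric. The Junge--Xu classification \cite[Corollary~6.8]{JX} then names $Z$.

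\emph{Step 3: complementation in $F$.} The projection is built from the dual sequence. Since $L_{p'}$ with $p'<2$ satisfies the dual Burkholder inequality, we take $x_n^*$-type dual functionals $\phi_n=\mathcal E$-normalized $|x_n|^{p-2}x_n^*$, or the martingale differences of them. The map $Px=\sum_n\langle\phi_n,x\rangle x_n$ is then completely bounded on all of $L_p(\mathcal M)$, using the dual column/row estimates in $L_{p'}$, and is the identity on $Z$. Restricting $P$ to $F$ gives the completely complemented copy, which is stronger than needed.

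\textbf{Main obstacle.} I expect Step 2 without hyperfiniteness to be hardest: producing martingale-difference (or independent) structure for a weakly null sequence in a general $L_p(\mathcal M)$. I would first try ultraproduct methods. Pass to $L_p(\mathcal M)^{\mathcal U}$ and use Junge's embedding of the resulting noncommutative $L_p$ into a hyperfinite-like setting through spreading models, so as to apply \cite[Corollary~6.3]{JX} to the spreading model. Then transfer the equivalence back to a subsequence using complete symmetry of the spreading model together with the uniform finite-dimensional estimates \cite[Theorem~6.2]{JX}, which are valid for arbitrary $\mathcal M$.
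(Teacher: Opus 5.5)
\textbf{Step~2 has a genuine gap, and it sits exactly where Junge and Xu needed hyperfiniteness.} Step~1 is fine: it is Raynaud--Xu's Corollary~6.3.

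\emph{Why the martingale reduction fails.} The martingale approximation you invoke uses the filtration that hyperfiniteness provides: finite-dimensional subalgebras with conditional expectations, exhausting $\mathcal M$. Your substitutes do not produce such a filtration.
\begin{enumerate}
\item The von Neumann algebra generated by $x_1,\dots,x_{n-1}$ is in general infinite-dimensional; already $x_1$ may generate all of $\mathcal M$.
\item For such algebras, weak nullness does not make $\|\mathcal E_{n-1}(x_n)\|_p$ small.
\item In the non-tracial case these conditional expectations need not exist.
\end{enumerate}
So the Burkholder inequality has nothing to act on.

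\emph{Why the ultraproduct fallback fails.} An (iterated) ultrapower of $L_p(\mathcal M)$ is an $L_p$-space over an algebra that is again non-hyperfinite. So the hyperfinite Junge--Xu selection theorem cannot be applied to a spreading model realized there. Their finite-dimensional estimates for completely symmetric sequences presuppose the complete symmetry you are trying to produce. And a spreading model controls only spread-out tuples of a fixed length, not all finite subsets of one subsequence uniformly in $m$.

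\emph{What the paper proves instead.} You are aiming at the full non-hyperfinite analogue of the Junge--Xu subsequence theorem, which the paper does not prove. For an arbitrary weakly null sequence it proves only the upper half, Theorem~\ref{thm:general-all-p-upper}: along one subsequence, $\|\sum_{k\le N}a_k\otimes x_{n_k}\|_p\le c_p\rho_p(a)$. The argument runs as follows.
\begin{enumerate}
\item Embed $d^{\theta\delta}x_n$ and the coefficients into analytic families between $L_2$ and $L_r$, with $r>p$ even, and subtract the weak limit.
\item Select indices so that the alternating traces with distinct indices are summable on both boundary lines.
\item Bound the $L_r$ line by Pisier's moment estimate (Lemma~\ref{lem:even-haagerup-upper}) and the $L_2$ line by near-orthogonality, then interpolate.
\end{enumerate}
The lower estimate does not come from square functions at all; it comes from a cb analysis map.

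\textbf{Step~3 is wrong as stated.} A cb projection defined on all of $L_p(\mathcal M)$ does not exist in general, so no choice of dual functionals $\phi_n$ can work. Here is a counterexample in $S_p$.
\begin{enumerate}
\item Take $F=\overline{\operatorname{span}}\{e_{n1}+e_{1n}:n\ge2\}$, which satisfies $F\cong_{\mathrm{cb}}C_p\cap R_p$.
\item Suppose $P:S_p\to Z\subset F$ is a cb projection with $\dim Z=\infty$.
\item Split $z\in Z$ into its column and row parts, $z=z_c+z_r$. The map $z\mapsto P(z_c)$, read off in the row coordinate of $F$, factors through a cb map $C_p\to R_p$. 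Likewise $z\mapsto P(z_r)$, read off in the column coordinate, factors through a cb map $R_p\to C_p$.
\item For $2<p<\infty$ such cb maps are necessarily compact, so both maps are compact. Yet they add up to $\mathrm{id}_Z$, which is impossible since $\dim Z=\infty$.
\end{enumerate}
This is why the theorem asserts complementation only in $F$, and in $L_p(\mathcal M)$ only for finite $\mathcal M$ (Corollary~\ref{cor:finite-models}).

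\emph{How the paper builds the projection.}
\begin{enumerate}
\item Using the Raynaud--Xu weight $h$, it renorms $F$ by $(\|xh\|_2^2+\|hx\|_2^2)^{1/2}$.
\item It takes orthonormal approximate eigenvectors $y_n$ of the associated operator $A$ at some $\lambda\in\sigma_{\mathrm{ess}}(A)$.
\item For $x\in F$ it writes $\langle y_n,x\rangle_H$ both as $\langle u_n,xh\rangle_2$ and as $\langle v_n,hx\rangle_2$. By the Junge--Nielsen--Oikhberg factorization these give cb maps into $C_p$ and into $R_p$ (Lemmas~\ref{lem:weighted-analysis} and~\ref{lem:spectral-analysis}). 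The two formulas agree only on $F$.
\item The synthesis map comes from Theorem~\ref{thm:general-all-p-upper} when $0<\lambda<1$, and from one-sided disjointness in an ultrapower when $\lambda\in\{0,1\}$.
\end{enumerate}
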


More explicitly, there is a completely bounded projection
\[
 P:F\longrightarrow F,\qquad P^2=P,\qquad P(F)=Z.
\]
Moreover, when $\mathcal M$ is finite, neither $C_p$ nor $R_p$
embeds completely isomorphically into $L_p(\mathcal M)$
\cite[Corollary~6.10]{JX}.
In this case, the projection can be chosen on the whole
space $L_p(\mathcal M)$, and there are only two possible types for the
infinite-dimensional subspace $Z$ selected to be completely complemented in $F$,
namely $\ell_p$ and $C_p\cap R_p$:
\begin{corollary}\label{cor:finite-models}
Let $\mathcal M$ be a finite von Neumann algebra and $2<p<\infty$.
Every infinite-dimensional closed subspace $F\subset L_p(\mathcal M)$
contains an infinite-dimensional subspace $Z$ which is completely
complemented in $L_p(\mathcal M)$ and completely isomorphic to
$\ell_p$ or $C_p\cap R_p$.
\end{corollary}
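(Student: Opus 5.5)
The plan is to deduce the corollary from Theorem~\ref{thm:general-high-p-relative-complement}. The type restriction is immediate. The real work is to upgrade "completely complemented in $F$" to "completely complemented in $L_p(\mathcal M)$", using finiteness of the trace. First apply Theorem~\ref{thm:general-high-p-relative-complement} to $F$ to get $Z\subset F$ completely isomorphic to one of the four model spaces. By \cite[Corollary~6.10]{JX}, neither $C_p$ nor $R_p$ embeds completely isomorphically into $L_p(\mathcal M)$ when $\mathcal M$ is finite, so $Z\simeq\ell_p$ or $Z\simeq C_p\cap R_p$.

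Next I reduce to a normalized trace. $Z$ is separable, so it lives in $L_p$ of a $\sigma$-finite piece of $\mathcal M$. Cutting by a central projection and rescaling, I may assume there is a faithful normal tracial state $\tau$. This piece is completely complemented in $L_p(\mathcal M)$ by a conditional expectation, so no generality is lost. Let $(z_n)$ be a basis of $Z$ completely equivalent to the canonical basis of the model space.

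\emph{Case $Z\simeq C_p\cap R_p$.} Here I would use the orthogonal projection $Q$ onto $Z$ in $L_2(\tau)$, as in Kadec--Pe\l czy\'nski. Since $Z$ is Hilbertian and $\|\cdot\|_2\le\|\cdot\|_p$, the $L_p$ and $L_2$ norms are equivalent on $Z$. Gram--Schmidt in $L_2$ then gives an $L_2$-orthonormal basis $(z_n)$ of $Z$ that is still completely equivalent to the basis of $C_p\cap R_p$. Write
\[
Q=J\circ V,\qquad V:x\mapsto(\tau(z_n^*x))_n,\qquad J:(\alpha_n)_n\mapsto\sum_n\alpha_nz_n .
\]
The map $J:C_p\cap R_p\to L_p$ is cb by hypothesis, so it suffices that $V:L_p\to C_p\cap R_p$ is cb. By duality this amounts to showing that $\Phi:(\alpha_n)\mapsto\sum_n\alpha_nz_n$ is cb from $C_{p'}+R_{p'}$ into $L_{p'}(\tau)$, i.e.\ bounded on $C_{p'}$ and on $R_{p'}$ separately. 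For the column part, take $a\in S_{p'}^m[C_{p'}]$. Factor $a_n=b_n\alpha$ with $\|\alpha\|_{S_r^m}\,\|(\sum_n b_n^*b_n)^{1/2}\|_{S_2^m}\lesssim\|(\sum_n a_n^*a_n)^{1/2}\|_{S_{p'}^m}$, where $1/p'=1/2+1/r$. Hölder then gives
\[
\Big\|\sum_n a_n\otimes z_n\Big\|_{p'}\le\Big\|\sum_n b_n\otimes z_n\Big\|_{2}\,\|\alpha\|_{S_r^m}.
\]
Orthonormality yields $\|\sum_n b_n\otimes z_n\|_2^2=\sum_n\operatorname{tr}(b_n^*b_n)$; this is exactly where finiteness of $\tau$ is used. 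The row part is symmetric, with the factorization on the left.

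\emph{Case $Z\simeq\ell_p$.} This is the step I expect to be the main obstacle. I would go back into the proof of the theorem, or redo the Sukochev/Raynaud--Xu perturbation argument. The aim is a block basis $(u_n)$ of $(z_n)$ and mutually orthogonal projections $e_n$ with $\sum_n\|u_n-e_nu_ne_n\|_p$ small. The diagonal compression $D:x\mapsto\sum_n e_nxe_n$, which is a conditional expectation onto $\bigoplus_n e_n\mathcal M e_n$, is completely contractive on $L_p$. In $\bigoplus_n L_p(e_n\mathcal M e_n)$, the span of the $e_nu_ne_n$ is completely isometric to $\ell_p$. It is completely complemented there by $\bigoplus_n P_n$, with $P_n(y)=\tau(v_n^*y)\,e_nu_ne_n$ and $v_n$ the norming functional of $e_nu_ne_n$ in $L_{p'}(e_n\mathcal M e_n)$. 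Composing, and using small perturbation (which preserves complete complementation), gives a cb projection of $L_p(\mathcal M)$ onto a subspace of $Z$ completely isomorphic to $\ell_p$. The delicate point is producing the near-disjoint block sequence inside $Z$ while keeping complete equivalence to the $\ell_p$ basis. Here the non-equiintegrability of $\{|z|^p: z\in Z,\ \|z\|_p\le 1\}$ (which is what prevents $Z$ from being Hilbertian) should supply the projections $e_n$, as in \cite{RX}.
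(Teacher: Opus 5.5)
\textbf{The gap: the $\ell_p$ case.} You never construct the near-disjoint sequence. Saying that non-equi-integrability ``should supply the projections $e_n$, as in \cite{RX}'' is a pointer, not an argument, and that construction is the whole content of the step; the compression, the rank-one projections and the perturbation are routine.

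Nothing needs to be rebuilt here. $Z\cong\ell_p$ is not Hilbertian, so \cite[Corollary~6.3]{RX} (recorded in the paper's preliminaries, valid for arbitrary $\mathcal M$) directly gives a subspace of $Z$ completely isomorphic to $\ell_p$ and completely complemented in $L_p(\mathcal M)$. This is exactly the paper's one-line treatment of this case.

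If you prefer your own route, it closes elementarily in the normalized tracial case:
\begin{itemize}
\item Since $Z$ is not Hilbertian, there are unit vectors $y_n\in Z$ with $\|y_n\|_2\to0$; otherwise $\|\cdot\|_p\approx\|\cdot\|_2$ on $Z$.
\item Put $\lambda_n\coloneqq\|y_n\|_2^{-1/(p-2)}$ and $w_n\coloneqq y_n\mathbf 1_{(\lambda_n,\infty)}(|y_n|)$. Then $\|y_n-w_n\|_p\le\|y_n\|_2^{1/p}$.
\item The join $g_n$ of the left and right supports of $w_n$ satisfies $\tau(g_n)\le2\lambda_n^{-2}\|y_n\|_2^2\to0$.
\item For a fast subsequence, set $G_k\coloneqq\bigvee_{j\ge k}g_{n_j}$ and $e_k\coloneqq G_k-G_{k+1}$. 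These are mutually orthogonal projections, and $\sum_k\|y_{n_k}-e_ky_{n_k}e_k\|_p$ can be made arbitrarily small, because $\|qw\|_p+\|wq\|_p\to0$ as $\tau(q)\to0$ for each fixed $w\in L_p$.
\end{itemize}
Your compression-and-perturbation argument then finishes.

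\textbf{The $C_p\cap R_p$ case.} This follows the paper's route: the $L_2(\tau)$-orthogonal projection onto $Z$, factored through an analysis map that is completely bounded into both $C_p$ and $R_p$. Your duality-and-H\"older factorization is a direct proof of Lemma~\ref{lem:weighted-analysis} with $h=\mathbf 1$, which the paper takes from \cite[Lemma~2.9]{JNO}; that part is fine. Two assertions need more support.
\begin{itemize}
\item The equivalence of $\|\cdot\|_2$ and $\|\cdot\|_p$ on $Z$ does not follow from ``Hilbertian and $\|\cdot\|_2\le\|\cdot\|_p$'' alone. It is the noncommutative Kadec--Pe\l czy\'nski theorem, which the paper invokes as \cite[Theorem~5.1]{RX} with $h_0=\mathbf 1$.
\item The claim that the Gram--Schmidt basis is still completely equivalent to the canonical basis of $C_p\cap R_p$ uses homogeneity of $C_p\cap R_p$: every bounded invertible operator on $\ell_2$ is a complete isomorphism of it. The paper sidesteps this by taking $W=T^{-1}P_2$ and $u_n=W^*e_n$, so only the given complete isomorphism $T$ is used.
\end{itemize}
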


 The key new ingredient in the proof of
Theorem~\ref{thm:general-high-p-relative-complement} is the matrix-valued upper selection estimate in
Theorem~\ref{thm:general-all-p-upper} below. 
This extends the  selection estimate in the  hyperfinite setting given in
Junge--Xu~\cite[Theorem~6.2 and Corollary~6.3]{JX}
to the setting of 
  an arbitrary von
Neumann algebra.
 
\begin{theorem}[Upper selection estimate]
\label{thm:general-all-p-upper}
For each $p\in(2,\infty)$ there is $c_p<\infty$ with the following
property: for any von Neumann algebra $\mathcal M$ and any bounded weakly
null sequence $(x_n)$ in $L_p(\mathcal M)$, there are integers
$1\le n_1<n_2<\cdots$ such that
\begin{equation}\label{eq:upper-selection-estimate}
 \left\|\sum_{k=1}^N a_k\otimes x_{n_k}\right\|_p
 \le c_p\left(\sup_{n\in\mathbb N}\left\|x_n\right\|_p\right)\rho_p(a)
\end{equation}
for all $m,N\in\mathbb N$ and $a=(a_1,\ldots,a_N)\in \mathbb{M}_m^N$.
The quantity $\rho_p(a)$ is defined in
\eqref{eq:rho-definition}.
The subsequence $(x_{n_k})$ depends only on $(x_n)$ and $p$,
whereas $c_p$ depends only on $p$.
\end{theorem}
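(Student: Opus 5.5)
Since $(x_n)$ is weakly null, I first pass to a subsequence that is a small perturbation of a ``block-type'' sequence. In the general (non-hyperfinite, possibly type III) setting, I would use Haagerup's reduction to pass to a semifinite algebra $\mathcal R\supset\mathcal M$ with a conditional expectation. Then $L_p(\mathcal M)$ is completely contractively complemented in $L_p(\mathcal R)$, and $\mathcal R$ is an increasing union of finite algebras $\mathcal M_j$ with expectations $\mathcal E_j$. Approximating each $x_n$ by elements of $L_p(\mathcal M_j)$ reduces the problem to the finite case up to a summable error; the error terms contribute at most $\sum_k\|a_k\|_p\,\delta_k\le\rho_p(a)\sum\delta_k$, since $\|a_k\|_{S_p^m}\le\rho_p(a)$.

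In a finite algebra $(\mathcal N,\tau)$, I would use the standard gliding-hump / Kadec--Pe\l czy\'nski splitting. Fix a finite projection $e$ and approximate $x_n$ by ``corner'' pieces. By weak nullness, $e x_n e\to0$ weakly while staying in $eL_pe\subset eL_2e$. So, after passing to a subsequence, one can arrange that the $x_{n_k}$ are, up to summable errors, noncommutative martingale differences with respect to a filtration $(\mathcal N_k)$ generated by finitely many spectral projections/approximants of previous terms. Concretely, choose $n_k$ so that $\|\mathcal E_{\mathcal N_{k-1}}(x_{n_k})\|_p<\delta_k$, which is possible because $\mathcal E_{\mathcal N_{k-1}}$ has finite-rank-like behaviour on the relevant corner: weakly null vectors go to zero in norm under a map into a finite-dimensional or reflexive $L_2$-controlled piece. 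Then $d_k=x_{n_k}-\mathcal E_{\mathcal N_{k-1}}x_{n_k}$ (suitably further conditioned so that $d_k\in L_p(\mathcal N_k)$) form a martingale difference sequence.

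The main estimate is the noncommutative Burkholder inequality of Junge--Xu applied in $L_p(\mathbb M_m\overline\otimes\mathcal N)$ to the differences $a_k\otimes d_k$:
\[
\Big\|\sum a_k\otimes d_k\Big\|_p\lesssim_p \Big(\sum\|a_k\otimes d_k\|_p^p\Big)^{1/p}+\Big\|\Big(\sum \mathcal E_{k-1}(a_k^*a_k\otimes d_k^*d_k)\Big)^{1/2}\Big\|_p+\text{row}.
\]
The diagonal term is at most $\sup\|x_n\|\,(\sum\|a_k\|_p^p)^{1/p}$, and $\ell_p$-norms are dominated by $\rho_p(a)$ for $p>2$. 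For the conditioned square functions, I would choose the subsequence so that $\mathcal E_{k-1}(d_k^*d_k)$ is close in $L_{p/2}$ to a fixed element $\xi$ (a weak$^*$ cluster point, obtained by a diagonal extraction in $L_{p/2}$). The column term then becomes $\|(\sum a_k^*a_k)\otimes \xi\|_{p/2}^{1/2}\le\|\xi\|_{p/2}^{1/2}\,\|(\sum a_k^*a_k)^{1/2}\|_{S_p^m}$ plus errors, and similarly for rows. This gives \eqref{eq:upper-selection-estimate} with $c_p$ equal to the Burkholder constant times an absolute factor.

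The obstacle I expect is making the conditioned square functions uniformly controlled. Conditional expectations onto $\mathcal N_{k-1}$ of $d_k^*d_k$ need not stabilize. I would first try to replace conditioning by the fixed expectation onto the initial finite-dimensional-like subalgebra and exploit that the tails are asymptotically ``free/independent-like'' relative to it. If that fails, I would use a plain upper estimate $\|\mathcal E_{k-1}(d_k^*d_k)\|_\infty$-free bound via the Junge--Xu Rosenthal-type inequality with $\mathcal E_{k-1}$ replaced by $\mathcal E_{\mathcal N_0}$, and choose the filtration so that the errors are summable.
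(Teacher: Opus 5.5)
There is a genuine gap: the reduction to approximate martingale differences does not work for a general $\mathcal M$, and this is exactly the point where Junge--Xu use hyperfiniteness in their Corollary~6.3.

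\textbf{The martingale step fails.} From weak nullness alone you get $\|\mathcal E_{\mathcal N_{k-1}}(x_n)\|_p\to0$ only when $\mathcal E_{\mathcal N_{k-1}}$ is compact, i.e.\ when $\mathcal N_{k-1}$ is finite-dimensional. Reflexivity or $L_2$-control does not help, since a weakly null sequence in an infinite-dimensional reflexive space need not be norm null. At the same time you need $x_{n_k}\in L_p(\mathcal N_k)$ up to summable errors. So the sequence must be captured asymptotically by an increasing chain of \emph{finite-dimensional} subalgebras, and nothing in your construction provides this:
\begin{itemize}
\item Finitely many non-commuting spectral projections can generate an infinite-dimensional algebra; two projections in generic position can generate $\mathbb M_2\otimes L_\infty[0,1]$.
\item Haagerup's reduction gives finite, not finite-dimensional, algebras $\mathcal R_j$. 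These cannot all be hyperfinite when $\mathcal M$ is not, and different $x_n$ are approximated in different $\mathcal R_j$.
\end{itemize}

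\textbf{A counterexample to the scheme.} In $L(\mathbb F_\infty)$ take a free circular family $x_n=c_n$; it is bounded and orthonormal, hence weakly null in $L_p$. Suppose $\|c_n-\mathcal E_{\mathcal B}(c_n)\|_2$ could be made arbitrarily small over finite-dimensional $*$-subalgebras $\mathcal B$. Then the maps $\mathcal E_{W^*(c_n)}\circ\mathcal E_{\mathcal B}$ would approximate the identity of $W^*(c_n)\cong L(\mathbb F_2)$ pointwise in $\|\cdot\|_2$ through finite-dimensional algebras, making it injective, which it is not. Hence there is $\varepsilon_0>0$, independent of $n$ by symmetry, such that
\[
\operatorname{dist}_{L_p}\bigl(x_{n_k},L_p(\mathcal N_k)\bigr)\ge\varepsilon_0
\]
for every finite-dimensional $\mathcal N_k$. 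The errors can therefore never be summable. At best your argument reproves the hyperfinite case, which is precisely the hypothesis the theorem removes.

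\textbf{The obstacle you flagged is not the real one.} Lemma~\ref{lem:positive-tensor-high-p} gives
\[
\Bigl\|\sum_k a_k^*a_k\otimes\mathcal E_{k-1}(d_k^*d_k)\Bigr\|_{p/2}\le\Bigl\|\sum_k a_k^*a_k\Bigr\|_{S_{p/2}^m}\max_k\|d_k\|_p^2,
\]
with no stabilization needed. A weak$^*$ cluster point would not give norm closeness anyway. For genuine martingale differences, even the Burkholder--Gundy square-function bound would suffice.

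\textbf{What the paper does instead.} It uses no filtration. It replaces the martingale structure by a property that does follow from weak nullness alone:
\begin{itemize}
\item It subtracts the weak limit $G$ of the analytic family $F_n(z)=d^{z\delta}u_nh_n^{1/2-z\delta}$, so the boundary values are weakly null.
\item It selects a subsequence so that the distinct-index mixed moments $\operatorname{Tr}(v_{i_1}^*v_{i_2}\cdots v_{i_{q-1}}^*v_{i_q})$ of these boundary values are summable, for $q=2$ and for an even integer $q=r>p$. This is possible because, with all but one index fixed, each moment has the form $\operatorname{Tr}(Cv_n)$ with $C$ fixed.
\item For even $r$, Lemma~\ref{lem:even-haagerup-upper} (Pisier's tensor estimate plus Khintchine) turns this into the $\rho_r$-bound in $L_r$, and near-orthogonality gives the $L_2$ bound.
\item Kosaki's weighted interpolation, with analytic Rademacher coefficients $b_k(z)$, then gives the estimate in $L_p$, using $L\le\kappa_p\rho_p(a)$.
\end{itemize}
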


\section{Preliminaries}\label{sec:preliminaries}

All spaces considered in this paper are complex, and we denote by
$\mathbb N$ the set $\{1,2,\ldots\}$.

We write $\mathbb{M}_m\coloneqq\mathbb{M}_m(\mathbb C)$,
the space of all $m\times m$ complex matrices, and
$S_p^m\coloneqq(\mathbb{M}_m,\left\|\cdot\right\|_{S_p})$, where
$\left\|a\right\|_{S_p}\coloneqq(\operatorname{tr}|a|^p)^{1/p}$ and
$\operatorname{tr}(a)\coloneqq\sum_{j=1}^m a_{jj}$ for $a\in \mathbb{M}_m$.

We recall Haagerup's construction of noncommutative $L_p$-spaces;
see \cite[Section~1.2]{HJX} and \cite[Sections~9.1--9.2]{HiaiModular}.
Let $\varphi$ be a faithful normal semifinite weight on $\mathcal M$.
Form the continuous core
\[
 \mathcal R\coloneqq\mathcal M\rtimes_{\sigma^\varphi}\mathbb R,
\]
where $(\sigma_t^\varphi)_{t\in\mathbb R}$ is the modular
automorphism group of $\varphi$. Its dual action
$(\widehat\sigma_t)_{t\in\mathbb R}$ and canonical faithful normal
semifinite trace $\tau_{\mathcal R}$ satisfy
\[
 \tau_{\mathcal R}\circ\widehat\sigma_t
 =e^{-t}\tau_{\mathcal R}\qquad(t\in\mathbb R).
\]
Let $L_0(\mathcal R,\tau_{\mathcal R})$ denote the algebra of
closed densely defined $\tau_{\mathcal R}$-measurable operators
affiliated with $\mathcal R$. For $1\le p<\infty$, put
\[
 L_p(\mathcal M)\coloneqq
 \big\{x\in L_0(\mathcal R,\tau_{\mathcal R}):
       \widehat\sigma_t(x)=e^{-t/p}x\quad(t\in\mathbb R)\big\}.
\]
For $\omega\in\mathcal M_*^+$, let $h_\omega$ be the
Radon--Nikod\'ym derivative of its dual weight with respect to
$\tau_{\mathcal R}$. The correspondence $\omega\mapsto h_\omega$
extends to a linear identification $\mathcal M_*\cong L_1(\mathcal M)$.
The canonical functional and the $L_p$-norm are given by
\[
 \operatorname{Tr}(h_\omega)\coloneqq\omega(\mathbf{1})\quad(\omega\in\mathcal M_*),
 \qquad
 \left\|x\right\|_p\coloneqq\big(\operatorname{Tr}(|x|^p)\big)^{1/p}
 \quad(x\in L_p(\mathcal M)).
\]
We set $L_\infty(\mathcal M)\coloneqq\mathcal M$ with the operator norm.
The spaces are independent of $\varphi$ up to canonical isometric
identifications; see \cite[Remark~1.2]{HJX}. 

Products of measurable operators mean their closures. H\"older's
inequality and trace cyclicity take the following forms
\cite[Section~1.2]{HJX}:
\[
 \begin{aligned}
 \left\|xy\right\|_r&\le\left\|x\right\|_p\left\|y\right\|_q,
 &&x\in L_p(\mathcal M),\ y\in L_q(\mathcal M),\quad
   \frac1r=\frac1p+\frac1q,\\
 \operatorname{Tr}(xy)&=\operatorname{Tr}(yx),
 &&x\in L_p(\mathcal M),\ y\in L_{p'}(\mathcal M),\quad
   \frac1p+\frac1{p'}=1,
 \end{aligned}
\]
where all exponents belong to $[1,\infty]$.
If $\mathcal M$ carries a faithful normal semifinite trace $\tau$,
then, for $1\le p<\infty$, $L_p(\mathcal M)$ is canonically
isometric to the tracial space \cite[Remark~1.2]{HJX}
\[
 L_p(\mathcal M,\tau)
 \coloneqq\{x\in L_0(\mathcal M,\tau):\tau(|x|^p)<\infty\},
 \qquad \left\|x\right\|_p=\tau(|x|^p)^{1/p}.
\]
Whenever a faithful normal finite trace $\tau$ on $\mathcal M$
is specified, we use this identification with $L_p(\mathcal M,\tau)$.
For $a_k\in \mathbb{M}_m$ and $x_k\in L_p(\mathcal M)$,
the notation $\left\|\sum_k a_k\otimes x_k\right\|_p$ refers to the norm in
$L_p(\mathbb{M}_m\overline\otimes\mathcal M)$.

Kosaki's interpolation theorem \cite[Theorem~9.1]{Kosaki}, together
with reiteration, gives the weighted compatible couples used in
Subsection~\ref{sec:analytic}.

For $2\le p<\infty$, $m,N\in\mathbb N$ and $a\in \mathbb{M}_m^N$, we
use the notation $\rho_p(a)$ defined in the Introduction.
The column and row square-function terms in $\rho_p(a)$ are,
respectively, the $S_p$ amplification norms of the canonical
bases of $C_p$ and $R_p$. For the standard
operator space intersection, coordinate contractions and the diagonal
embedding into $C_p\oplus_\infty R_p$ give
\begin{equation}\label{eq:intersection-amplification}
 \rho_p(a)\le
 \left\|\sum_{k=1}^N a_k\otimes e_k\right\|_{S_p^m[C_p\cap R_p]}
 \le2\rho_p(a).
\end{equation}
The amplification norm for the canonical basis of $\ell_p$ is
$(\sum_{k=1}^N\left\|a_k\right\|_{S_p^m}^p)^{1/p}$. In particular, all four canonical
bases are completely symmetric.

Let $(r_k)$ be independent Rademacher signs on a probability
space $(\Omega,\mathbb P)$. For each $2\le p<\infty$, fix a
constant $\kappa_p\ge1$, depending only on $p$, such that
\begin{equation}\label{eq:khintchine-general}
 \left(\mathbb E\left\|\sum_{k=1}^N r_kx_k\right\|_p^p\right)^{1/p}
 \le\kappa_p\max\left\{
 \left\|\left(\sum_{k=1}^N x_k^*x_k\right)^{1/2}\right\|_p,
 \left\|\left(\sum_{k=1}^N x_kx_k^*\right)^{1/2}\right\|_p
 \right\}
\end{equation}
for every von Neumann algebra $\mathcal N$, every $N\in\mathbb N$,
and every $x_1,\ldots,x_N\in L_p(\mathcal N)$.
Such a constant exists by the noncommutative Khintchine inequalities
\cite[Theorem~6.2 and Remark~3.2]{HJX}.
In particular, for $m,N\in\mathbb N$ and
$a=(a_1,\ldots,a_N)\in\mathbb{M}_m^N$, the two-sided estimate is
\begin{equation}\label{eq:khintchine}
 \rho_p(a)\le
 \left\|\sum_{k=1}^N r_k\otimes a_k\right\|_{L_p(\Omega;S_p^m)}
 \le\kappa_p\rho_p(a),\qquad 2\le p<\infty.
\end{equation}
We use the upper estimate in Subsection~\ref{sec:analytic}.

We shall use two results of Raynaud and Xu.
By \cite[Corollary~6.3]{RX}, a subspace of $L_p(\mathcal M)$,
$2<p<\infty$, either is Banach-space isomorphic to a Hilbert space
or contains a copy of $\ell_p$ which is completely complemented in
the ambient space. If a closed subspace $F$ contains no $\ell_p$,
the equivalence of (iii) and (v) in \cite[Theorem~5.1]{RX} gives $h\in L_r(\mathcal M)$,
$1/r=1/2-1/p$, such that
\[
 c\left\|x\right\|_p\le
 \big(\left\|xh\right\|_2^2+\left\|hx\right\|_2^2\big)^{1/2}
 \qquad(x\in F)
\]
for some $c>0$. Both results hold for arbitrary $\mathcal M$.

By the support reduction in \cite[Remark~3.2]{HJX}, the left and
right supports of each $x\in L_p(\mathcal M)$ are {$\sigma$-finite}.
For each sequence $(x_n)\subset L_p(\mathcal M)$, the countable
join $e$ of its left and right supports is {$\sigma$-finite}.
The von Neumann algebra $e\mathcal M e$, with unit $e$,
is called a corner of $\mathcal M$. Moreover, \cite[Fact~1.4]{RX} gives
\begin{equation}\label{eq:sigma-finite-support-reduction}
 x_n=e x_n e\in eL_p(\mathcal M)e=L_p(e\mathcal M e)
 \qquad(n\in\mathbb N).
\end{equation}
A faithful density can then be chosen in this corner.

\section{Proofs of the main results}\label{sec:proofs}

\subsection{Positive tensors and orthogonal sums}\label{sec:orthogonal}

We begin with two estimates used in the proof of
Theorem~\ref{thm:general-all-p-upper}. The first follows directly
from Pisier's factorization formula \cite[Theorem~1.5]{PisierVector}.

\begin{lemma}\label{lem:positive-tensor-high-p}
Let $\mathcal N$ be a von Neumann algebra and $1\le s<\infty$.
For $m,N\in\mathbb N$, $a_1,\ldots,a_N\in(\mathbb{M}_m)_+$ and
$b_1,\ldots,b_N\in L_s(\mathcal N)_+$,
\[
 \left\|\sum_{k=1}^N a_k\otimes b_k\right\|_s
 \le \left\|\sum_{k=1}^N a_k\right\|_{S_s^m}
       \max_{1\le k\le N}\left\|b_k\right\|_s.
\]
\end{lemma}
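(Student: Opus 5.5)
Write $T\coloneqq\sum_k a_k\otimes b_k$. The plan is to factor the positive operator $T$ as $T=\alpha\,D\,\alpha$, where $\alpha\coloneqq(\sum_k a_k)^{1/2}\otimes\mathbf 1$ and $D$ is block-diagonal with diagonal blocks $b_k$. H\"older's inequality then gives the bound directly. This is the content of Pisier's factorization formula \cite[Theorem~1.5]{PisierVector} in the case of positive tensors, but we make it explicit.

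\emph{Step 1 (reduction).} By scaling $b_k$, assume $\max_k\|b_k\|_s=1$. Put $A\coloneqq\sum_k a_k\in(\mathbb M_m)_+$, and replace $a_k$ by $a_k+\varepsilon\mathbf 1/N$. Then $A$ is invertible, and letting $\varepsilon\to0$ at the end is harmless, because both sides are continuous in $(a_k)$. Define $u_k\coloneqq a_k^{1/2}A^{-1/2}\in\mathbb M_m$. Then
\[
\sum_k u_k^*u_k=A^{-1/2}\Big(\sum_k a_k\Big)A^{-1/2}=\mathbf 1
\qquad\text{and}\qquad
a_k=A^{1/2}u_k^*u_kA^{1/2}.
\]
So the column $V\coloneqq(u_k\otimes\mathbf 1)_{k=1}^N$ is an isometry from $\mathbb C^m\otimes(\cdot)$ into $\mathbb C^{Nm}\otimes(\cdot)$. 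More precisely, $V$ is a contraction in $\mathbb M_{Nm,m}\otimes\mathcal N$.

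\emph{Step 2 (factorization).} Let $D\coloneqq\operatorname{diag}(\mathbf 1_m\otimes b_1,\ldots,\mathbf 1_m\otimes b_N)\in L_s(\mathbb M_{Nm}\overline\otimes\mathcal N)_+$. A direct computation gives
\[
T=(A^{1/2}\otimes\mathbf 1)\,V^*DV\,(A^{1/2}\otimes\mathbf 1).
\]
The key point is to use cyclicity/positivity to move the $A^{1/2}$ factors together. Write $T=Y^*Y$ with $Y\coloneqq D^{1/2}V(A^{1/2}\otimes\mathbf 1)$. Then $\|T\|_s=\|YY^*\|_s$, and
\[
YY^*=D^{1/2}V(A\otimes\mathbf 1)V^*D^{1/2}.
\]
This alone does not separate the factors, so we argue differently.

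\emph{Step 3 (H\"older in the right form).} To get the product of norms rather than a mixed term, we estimate by duality. Take $y\ge0$ with $\|y\|_{s'}\le1$. Then
\[
\operatorname{Tr}(Ty)=\sum_k\operatorname{Tr}\big((a_k\otimes\mathbf 1)^{1/2}(\mathbf 1\otimes b_k)(a_k\otimes\mathbf 1)^{1/2}y\big).
\]
We bound $\operatorname{Tr}((a_k\otimes b_k)y)\le\|b_k\|_s\,\|(\operatorname{tr}\text{-partial})\|$. Concretely, for positive $b$ with $\|b\|_s\le1$, the map $a\mapsto a\otimes b$ is positive. It is contractive from $S_1^m$ to $L_1$ after pairing and from $\mathbb M_m$ to $L_\infty$-weighted spaces, so interpolating should yield the claim.

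The cleanest route, which is what I would actually execute, is via interpolation. The map
\[
\Phi:(a_k)\mapsto\sum_k a_k\otimes b_k
\]
is positive and linear in $(a_k)$. Consider the bilinear form in $(A,\cdot)$ restricted to the positive cone. At $s=1$, $\operatorname{Tr}(\sum a_k\otimes b_k)=\sum\operatorname{tr}(a_k)\operatorname{Tr}(b_k)\le\operatorname{tr}(A)\max_k\|b_k\|_1$. At $s=\infty$, $\|\sum a_k\otimes b_k\|_\infty\le\|\sum a_k\|\max_k\|b_k\|_\infty$, because $a_k\otimes b_k\le\|b_k\|\,a_k\otimes\mathbf 1$. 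For intermediate $s$, I would use the Step 2 factorization with the analytic family
\[
z\mapsto(A^{sz/2}\otimes\mathbf 1)\,V^*D^{sz}V\,(A^{sz/2}\otimes\mathbf 1)
\]
on the strip, with the normalization $\operatorname{tr}A^s=1=\max\|b_k\|_s^s$, combined with the three lines lemma. The main obstacle is making this Stein interpolation rigorous in Haagerup $L_p$ for non-tracial $\mathcal N$. To sidestep it, I would reduce to the $\sigma$-finite corner via \eqref{eq:sigma-finite-support-reduction} and then invoke Pisier's formula \cite[Theorem~1.5]{PisierVector} directly for $S_s^m[L_s(\mathcal N)]$. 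That formula says $\|T\|=\inf\|\alpha\|_{S_{2s}}\|\hat T\|_{\mathbb M_m(L_s)}\|\beta\|_{S_{2s}}$ over factorizations $T=\alpha\hat T\beta$. Take $\alpha=\beta=A^{1/2}$ and $\hat T=\sum_k u_k^*u_k\otimes b_k$. Here $\|A^{1/2}\|_{S_{2s}}^2=\|A\|_{S_s}$, and
\[
\hat T\le\Big(\sum_k u_k^*u_k\Big)\otimes(\text{``}\max b_k\text{''})
\]
in the operator-space sense $\mathbb M_m(L_s)$, with norm $\le\max_k\|b_k\|_s$. This last bound is the column-contraction estimate $\|V^*DV\|_{\mathbb M_m(L_s)}\le\|D\|_{\mathbb M_{Nm}(L_s)}=\max_k\|b_k\|_s$, valid because $V$ is a scalar contraction and $D$ is diagonal.
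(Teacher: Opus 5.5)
Your final route is the paper's proof: Pisier's factorization with $\alpha=\beta=A^{1/2}$, and the middle factor $V^*DV=\sum_k A^{-1/2}a_kA^{-1/2}\otimes b_k$ bounded in $\mathbb M_m(L_s(\mathcal N))$ by $\max_k\|b_k\|_s$ via the matrix-norm axioms, differing only in your use of the column $(a_k^{1/2}A^{-1/2})_k$ and an $\varepsilon$-perturbation where the paper uses the row $(c_1^{1/2},\ldots,c_N^{1/2})$ and the inverse of $A^{1/2}$ on its support. The abandoned Steps 2--3, the Stein-interpolation sketch and the $\sigma$-finite reduction are unnecessary; what you should state explicitly is the identification $S_s^m[L_s(\mathcal N)]=L_s(\mathbb M_m\overline\otimes\mathcal N)$, which holds for arbitrary $\mathcal N$.
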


\begin{proof}
Put $E\coloneqq L_s(\mathcal N)$ with its canonical operator-space
structure, and denote its matrix levels by $\mathbb{M}_l(E)$,
$l\in\mathbb N$.
We use the isometric identification
\[
 S_s^m[E]=L_s(\mathbb{M}_m\overline\otimes\mathcal N),
\]
valid for arbitrary $\mathcal N$; see
\cite[Section~6, immediately before Lemma~6.1]{RX}.
Pisier's factorization formula \cite[Theorem~1.5, p.~21]{PisierVector}
gives
\[
 \left\|uXv\right\|_{S_s^m[E]}
 \le\left\|u\right\|_{S_{2s}^m}\left\|X\right\|_{\mathbb{M}_m(E)}\left\|v\right\|_{S_{2s}^m}
 \qquad(u,v\in \mathbb{M}_m,\ X\in \mathbb{M}_m(E)).
\]

Set $A\coloneqq\sum_{k=1}^N a_k$. If $A=0$, then all $a_k=0$ and
there is nothing to prove. Otherwise, let $e$ be the support
projection of $A$ and let $A^{-1/2}$ denote the inverse of
$A^{1/2}$ on $e\mathbb C^m$, extended by zero on
$(\mathbf{1}-e)\mathbb C^m$. Define
\[
 c_k\coloneqq A^{-1/2}a_kA^{-1/2},\qquad
 W\coloneqq\begin{pmatrix}c_1^{1/2}&\cdots&c_N^{1/2}\end{pmatrix}
 \in \mathbb{M}_{m,Nm}(\mathbb C).
\]
Write $\left\|W\right\|$ for the operator norm of
$W:\mathbb C^{Nm}\to\mathbb C^m$.
Then $\sum_kc_k=e$, $a_k=A^{1/2}c_kA^{1/2}$ and
$WW^*=e$. Since $e$ is a nonzero projection,
\[
 \left\|W\right\|^2=\left\|WW^*\right\|_{B(\mathbb C^m)}
 =\left\|e\right\|_{B(\mathbb C^m)}=1.
\]
The matrix norm axioms give
\[
 \begin{aligned}
 X&:=\sum_{k=1}^N c_k\otimes b_k
   =W\operatorname{diag}(\mathbf{1}_{\mathbb M_m}\otimes b_1,\ldots,\mathbf{1}_{\mathbb M_m}\otimes b_N)W^*,\\
 \left\|X\right\|_{\mathbb{M}_m(E)}
 &\le\left\|W\right\|^2
       \left\|\operatorname{diag}(\mathbf{1}_{\mathbb M_m}\otimes b_1,\ldots,\mathbf{1}_{\mathbb M_m}\otimes b_N)
         \right\|_{\mathbb{M}_{Nm}(E)}
 =\max_k\left\|b_k\right\|_s.
 \end{aligned}
\]
Applying the factorization estimate with $u=v=A^{1/2}$ yields
\[
 \begin{aligned}
 \left\|\sum_k a_k\otimes b_k\right\|_{S_s^m[E]}
 &=\left\|A^{1/2}XA^{1/2}\right\|_{S_s^m[E]}\\
 &\le\left\|A^{1/2}\right\|_{S_{2s}^m}^{\,2}\max_k\left\|b_k\right\|_s
 =\left\|A\right\|_{S_s^m}\max_k\left\|b_k\right\|_s.
 \end{aligned}
\]
\end{proof}

{If $\mathcal N$ admits a faithful normal tracial state,
\cite[Theorem~3.1]{PisierOrth} gives the bound without an error term
for families satisfying
$\operatorname{Tr}(u_{i_1}^*u_{i_2}\cdots u_{i_{p-1}}^*u_{i_p})=0$
whenever $i_1,\ldots,i_p$ are pairwise distinct, with $3\pi p/2$
in place of $K_p$ below.} The estimate
with the error term $2^{1/p}\Delta_p(u)^{1/p}$ below follows
for arbitrary $\mathcal N$ from
Pisier's Banach-space tensor estimate \cite[Theorem~5.1]{PisierOrth}
and the noncommutative Khintchine inequalities
\cite[Theorem~6.2]{HJX}. We give the application below, with
$K_p\coloneqq4p\kappa_p$, where $\kappa_p$ is fixed in
\eqref{eq:khintchine-general}.
\Needspace{4\baselineskip}
\begin{lemma}
\label{lem:even-haagerup-upper}
{
Let $\mathcal N$ be a von Neumann algebra, let $p\in2\mathbb N$,
$p\ge4$, and let
$u=(u_1,\ldots,u_N)\in L_p(\mathcal N)^N$, $N\in\mathbb N$.
Set $K_p\coloneqq4p\kappa_p$, with $\kappa_p$ as in
\eqref{eq:khintchine-general}.
Define
\[
 \Delta_p(u)\coloneqq
 \sum_{\substack{1\le i_1,\ldots,i_p\le N\\
                 i_r\ne i_s\ (r\ne s)}}
 \left|\operatorname{Tr}(u_{i_1}^*u_{i_2}\cdots
                         u_{i_{p-1}}^*u_{i_p})\right|.
\]
Then
\[
 \left\|\sum_{k=1}^N u_k\right\|_p
 \le K_p\left\|\left(\sum_{k=1}^N(u_k^*u_k+u_ku_k^*)\right)^{1/2}\right\|_p
       +2^{1/p}\Delta_p(u)^{1/p}.
\]
In particular, if
\begin{equation}\label{eq:even-moment-vanishing}
 \operatorname{Tr}(u_{i_1}^*u_{i_2}\cdots
                 u_{i_{p-1}}^*u_{i_p})=0
\end{equation}
whenever $i_1,\ldots,i_p\in\{1,\ldots,N\}$ are pairwise distinct,
then $\Delta_p(u)=0$ and
\[
 \left\|\sum_{k=1}^N u_k\right\|_p
 \le K_p\left\|\left(\sum_{k=1}^N(u_k^*u_k+u_ku_k^*)\right)^{1/2}
           \right\|_p.
\]
}
\end{lemma}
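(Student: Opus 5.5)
We will expand the $p$-th power directly and then compare with a Rademacher average. Since $p=2q$ is even, put $S\coloneqq\sum_k u_k$. Then
\[
 \left\|S\right\|_p^p=\operatorname{Tr}\big((S^*S)^{q}\big)
 =\sum_{1\le i_1,\ldots,i_p\le N}\operatorname{Tr}(u_{i_1}^*u_{i_2}u_{i_3}^*u_{i_4}\cdots u_{i_{p-1}}^*u_{i_p}),
\]
with the Haagerup trace and H\"older's inequality making every term finite. We split the multi-indices $(i_1,\ldots,i_p)$ into two classes. The first class consists of those with all entries pairwise distinct; their total contribution is bounded in absolute value by $\Delta_p(u)$ by definition. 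The second class, those with at least one repeated index, is what Pisier's argument in \cite[Theorem~5.1]{PisierOrth} controls. Rather than redo the combinatorics, we will use that theorem as stated: it bounds $\|S\|_p$ by a constant of order $p$ times a Rademacher average $\big(\mathbb E\|\sum_k r_k u_k\|_p^p\big)^{1/p}$ (equivalently by the square-function quantities), plus the $1/p$-th power of the distinct-index sum. The factor $2^{1/p}$ in front of $\Delta_p(u)^{1/p}$ will come from the elementary inequality $(\alpha+\beta)^{1/p}\le \alpha^{1/p}+\beta^{1/p}$ after a step of the form $\|S\|_p^p\le A^p+2\Delta_p(u)$, with $A$ the Khintchine-type term.

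If we must reconstruct the repeated-index bound ourselves, we will use a decoupling/partition argument. For each set partition $\sigma$ of $\{1,\ldots,p\}$ other than the partition into singletons, sum over index tuples whose equality pattern is exactly $\sigma$. By M\"obius inversion over the partition lattice, these sums can be rewritten through sums over tuples constrained only to be constant on blocks. We then compare the full moment $\sum_{\text{all}}$ with the Rademacher moment $\mathbb E\|\sum r_ku_k\|_p^p$. The Rademacher moment retains exactly the tuples in which every index appears an even number of times, and in particular it kills the distinct-index terms. Pisier's combinatorial estimate shows the difference between the two moments is dominated by $(Cp)^p$ times the Rademacher moment plus the distinct part. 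The constant $4p$ accounts for the growth of the number of partitions, controlled by $p^p$ up to $4^p$.

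Finally, by \eqref{eq:khintchine-general} applied in $\mathcal N$,
\[
 \Big(\mathbb E\Big\|\sum r_ku_k\Big\|_p^p\Big)^{1/p}
 \le\kappa_p\max\Big\{\Big\|\Big(\sum u_k^*u_k\Big)^{1/2}\Big\|_p,\Big\|\Big(\sum u_ku_k^*\Big)^{1/2}\Big\|_p\Big\}.
\]
Each of the two terms on the right is at most $\|(\sum(u_k^*u_k+u_ku_k^*))^{1/2}\|_p$, by operator monotonicity of $t\mapsto t^{1/2}$ and monotonicity of $\|\cdot\|_{p/2}$ on positives. This yields the constant $K_p=4p\kappa_p$. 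The ``in particular'' statement is immediate, since under \eqref{eq:even-moment-vanishing} every term in $\Delta_p(u)$ vanishes.

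The main obstacle is the repeated-index step: making sure Pisier's Banach-space estimate \cite[Theorem~5.1]{PisierOrth} transfers verbatim to a non-tracial $\mathcal N$. We will handle it by noting that his proof only uses cyclicity of the trace and H\"older's inequality for products of $L_p$ elements, both of which hold for $\operatorname{Tr}$ on Haagerup $L_p$-spaces by Section~\ref{sec:preliminaries}. We must also check that the constants he obtains are indeed bounded by $4p$ times the Rademacher average.
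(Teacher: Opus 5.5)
There is a genuine gap, and it sits exactly at the repeated-index step, which carries the content of the lemma.

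First, \cite[Theorem~5.1]{PisierOrth} is not a bound on $\|\sum_k u_k\|_p$. It is a purely Banach-space estimate for the projective tensor norm on $E^{\odot p}$, $E=L_p(\mathcal N)$. So there is nothing to ``transfer'' to the non-tracial case, but you need a bridge that your proposal never supplies. The paper uses the functional $\Lambda(x_1\otimes\cdots\otimes x_p)\coloneqq\operatorname{Tr}(x_1\cdots x_p)$, which satisfies $|\Lambda(z)|\le\|z\|_\pi$ by iterated H\"older. It applies $\Lambda$ to $U=F_1\otimes\cdots\otimes F_p$, where $F_r$ is alternately $f^*$ and $f$ with $f=\sum_ku_k$, and to the distinct-index part $V$ of $U$. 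This gives $\Lambda(U)=t^p$ with $t=\|f\|_p$ and $|\Lambda(V)|\le\Delta_p(u)$. Pisier's theorem then gives $\|U-V\|_\pi\le\sum_{j=0}^{p-2}\binom pj(p-j)!\,t^jR^{p-j}$, where $R$ is the Rademacher average. The essential point is that this bound involves $t$ itself.

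Second, your reconstruction claims that the difference between the full moment and the Rademacher moment is dominated by $(Cp)^p$ times the Rademacher moment plus the distinct part. This is false. Take $\mathcal N=\mathbb C$ and $u_k=N^{-1/2}$ for $1\le k\le N$. Then $R\le\kappa_p$ by \eqref{eq:khintchine-general} and $t^p=N^{p/2}$. Moreover,
$\Delta_p(u)=N(N-1)\cdots(N-p+1)N^{-p/2}=N^{p/2}-\binom p2N^{p/2-1}+O(N^{p/2-2})$.
So your inequality would force $\binom p2N^{p/2-1}=O(1)$, which fails as $N\to\infty$ because $p\ge4$. No estimate of the mixed terms by $R$, or by the square function alone, can work.

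What is missing is the absorption step. From $t^p\le\Delta_p(u)+\sum_{j\le p-2}\binom pj(p-j)!\,t^jR^{p-j}$ and $R\le\kappa_ps$, there are two cases:
\begin{itemize}
\item either $t\le4p\kappa_ps$;
\item or $t>4p\kappa_ps$, in which case the polynomial is at most $t^p\sum_{k\ge2}4^{-k}=t^p/12$, whence $t^p\le\frac{12}{11}\Delta_p(u)$.
\end{itemize}
This case split is where $K_p=4p\kappa_p$ and the factor $2^{1/p}$ come from, not from counting partitions. It is also the only justification for your target inequality $\|S\|_p^p\le A^p+2\Delta_p(u)$, which you state but never derive.
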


\begin{proof}
Put $E\coloneqq L_p(\mathcal N)$ and
\[
 f\coloneqq\sum_{k=1}^N u_k,\qquad t\coloneqq\left\|f\right\|_p,\qquad
 s\coloneqq\left\|\left(\sum_{k=1}^N(u_k^*u_k+u_ku_k^*)\right)^{1/2}\right\|_p.
\]
Let $(\varepsilon_k)_{k=1}^N$ be independent Rademacher signs and set
\[
 R\coloneqq\left(\mathbb E\left\|\sum_{k=1}^N\varepsilon_k u_k\right\|_p^p
    \right)^{1/p}.
\]
By \eqref{eq:khintchine-general},
{
\begin{equation}\label{eq:rademacher-bound}
 R\le\kappa_p\max\left\{
 \left\|\left(\sum_k u_k^*u_k\right)^{1/2}\right\|_p,
 \left\|\left(\sum_k u_ku_k^*\right)^{1/2}\right\|_p
 \right\}\le\kappa_p s.
\end{equation}
}

On the algebraic tensor product $E^{\odot p}$, use the projective norm
\[
 \left\|z\right\|_\pi\coloneqq
 \inf_{\substack{L\in\mathbb N,\ x_{\ell,r}\in E\\
 z=\sum_{\ell=1}^Lx_{\ell,1}\otimes\cdots\otimes x_{\ell,p}}}
 \sum_{\ell=1}^L\prod_{r=1}^p\left\|x_{\ell,r}\right\|_p.
\]
Define the linear functional $\Lambda:E^{\odot p}\to\mathbb C$ by
\[
 \Lambda(x_1\otimes\cdots\otimes x_p)
 \coloneqq\operatorname{Tr}(x_1\cdots x_p).
\]
For every representation
$z=\sum_{\ell=1}^Lx_{\ell,1}\otimes\cdots\otimes x_{\ell,p}$,
{set $P_{\ell,j}\coloneqq x_{\ell,1}\cdots x_{\ell,j}$ for
$1\le j\le p$. Starting with $P_{\ell,1}=x_{\ell,1}$,
apply the noncommutative H\"older inequality
\cite[Section~1.2]{HJX} with exponents $p/(j-1)$ and $p$
for each $2\le j\le p$. This gives
\[
 P_{\ell,j}\in L_{p/j}(\mathcal N),\qquad
 \left\|P_{\ell,j}\right\|_{p/j}
 \le\prod_{r=1}^j\left\|x_{\ell,r}\right\|_p
 \quad(1\le j\le p).
\]
In particular, $P_{\ell,p}\in L_1(\mathcal N)$ and
$|\operatorname{Tr}(P_{\ell,p})|\le\left\|P_{\ell,p}\right\|_1$.
The triangle inequality therefore yields}
\[
 |\Lambda(z)|
 ={\left|\sum_{\ell=1}^L
       \operatorname{Tr}(P_{\ell,p})\right|}
 {\le\sum_{\ell=1}^L|\operatorname{Tr}(P_{\ell,p})|
       \le\sum_{\ell=1}^L\prod_{r=1}^p\left\|x_{\ell,r}\right\|_p.}
\]
Taking the infimum over these representations yields
\begin{equation}\label{eq:trace-projective-bound}
 |\Lambda(z)|\le\left\|z\right\|_\pi
 \qquad(z\in E^{\odot p}).
\end{equation}

For $1\le r\le p$, define
\[
 d_k(r)\coloneqq
 \begin{cases}
 u_k^*,&r\text{ odd},\\
 u_k,&r\text{ even},
 \end{cases}
 \qquad 1\le k\le N,\qquad
 F_r\coloneqq\sum_{k=1}^N d_k(r).
\]
Then
\[
 \left\|F_r\right\|_p=t,\qquad
 \left(\mathbb E\left\|\sum_{k=1}^N\varepsilon_kd_k(r)\right\|_p^p
 \right)^{1/p}=R
 \qquad(1\le r\le p).
\]
Set
\[
 \mathcal I_{p,N}
 \coloneqq\{(i_1,\ldots,i_p)\in\{1,\ldots,N\}^p:
       i_r\ne i_s\text{ whenever }r\ne s\}
\]
and define
\[
 U\coloneqq F_1\otimes\cdots\otimes F_p,\qquad
 V\coloneqq\sum_{(i_1,\ldots,i_p)\in\mathcal I_{p,N}}
       d_{i_1}(1)\otimes\cdots\otimes d_{i_p}(p).
\]
These definitions give
\begin{align*}
 \Lambda(U)
 &=\operatorname{Tr}(F_1\cdots F_p)
   =\operatorname{Tr}((f^*f)^{p/2})=t^p,\\
 |\Lambda(V)|
 &=\left|\sum_{(i_1,\ldots,i_p)\in\mathcal I_{p,N}}
       \operatorname{Tr}(u_{i_1}^*u_{i_2}\cdots
                        u_{i_{p-1}}^*u_{i_p})\right|
 \le\sum_{(i_1,\ldots,i_p)\in\mathcal I_{p,N}}
       \left|\operatorname{Tr}(u_{i_1}^*u_{i_2}\cdots
                        u_{i_{p-1}}^*u_{i_p})\right|
   =\Delta_p(u).
\end{align*}
Consequently, \eqref{eq:trace-projective-bound} implies
\begin{equation}\label{eq:even-trace-tensor-comparison}
 \begin{aligned}
 t^p&=|\Lambda(U)|
 \le|\Lambda(U-V)|+|\Lambda(V)|\\
 &\le\left\|U-V\right\|_\pi+\Delta_p(u).
 \end{aligned}
\end{equation}

Apply \cite[Theorem~5.1]{PisierOrth} with
$E_1=\cdots=E_p=E$ and the families $(d_k(r))_{k=1}^N$.
Since
$\#\{A\subset\{1,\ldots,p\}:|A|=j\}=\binom pj$, it follows that 
\begin{equation}\label{eq:pisier-tensor-subsets}
 \begin{aligned}
 \left\|U-V\right\|_\pi
 &\le\sum_{\substack{A\subset\{1,\ldots,p\}\\|A|\le p-2}}
       (p-|A|)!\,t^{|A|}R^{p-|A|}\\
 &=\sum_{j=0}^{p-2}\binom pj(p-j)!\,t^jR^{p-j}
 \overset{\eqref{eq:rademacher-bound}}{\le}
    \sum_{j=0}^{p-2}\binom pj(p-j)!\,t^j(\kappa_p s)^{p-j}.
 \end{aligned}
\end{equation}
Combining \eqref{eq:even-trace-tensor-comparison} and
\eqref{eq:pisier-tensor-subsets}, we obtain
\begin{equation}\label{eq:robust-even-polynomial}
 t^p\le\Delta_p(u)+
 \sum_{j=0}^{p-2}\binom pj(p-j)!\,t^j(\kappa_p s)^{p-j}.
\end{equation}

If $t\le4p\kappa_p s$, then
$t\le4p\kappa_p s+2^{1/p}\Delta_p(u)^{1/p}$.
If $t>4p\kappa_p s$, then $t>0$ and
\[
\frac1{t^p}\sum_{j=0}^{p-2}
       \binom pj(p-j)!\,t^j(\kappa_p s)^{p-j}
 =\sum_{k=2}^{p}\frac{p!}{(p-k)!}
            \left(\frac{\kappa_p s}{t}\right)^k
 \le\sum_{k=2}^{p}
            \left(\frac{p\kappa_p s}{t}\right)^k
 \le\sum_{k=2}^{\infty}4^{-k}=\frac1{12}.
\]
Thus \eqref{eq:robust-even-polynomial} gives
\[
 \frac{11}{12}t^p\le\Delta_p(u),\qquad
 t\le\left(\frac{12}{11}\right)^{1/p}\Delta_p(u)^{1/p}
 \le2^{1/p}\Delta_p(u)^{1/p}.
\]
In both cases,
\begin{equation}\label{eq:robust-even-upper}
 t\le4p\kappa_p s+2^{1/p}\Delta_p(u)^{1/p}.
\end{equation}
{Equation~\eqref{eq:robust-even-upper} proves the
general estimate. Under \eqref{eq:even-moment-vanishing}, every
summand in $\Delta_p(u)$ is zero, so the inequality without the
error term follows.}
\end{proof}

\subsection{Proof of Theorem~\ref{thm:general-all-p-upper}}\label{sec:analytic}

Throughout this subsection, write
\[
 \mathbb S\coloneqq\{z\in\mathbb C:0\le\operatorname{Re}z\le1\},
 \qquad
 \mathbb S^\circ\coloneqq\{z\in\mathbb C:0<\operatorname{Re}z<1\}.
\]
For $1\le q<\infty$, the notation $y_k\rightharpoonup y$ in
$L_q(\mathcal M)$ means weak convergence:
\[
 \operatorname{Tr}(a y_k)\longrightarrow\operatorname{Tr}(a y)
 \quad(a\in L_{q'}(\mathcal M)),\qquad
 \frac1q+\frac1{q'}=1.
\]

\begin{lemma}
\label{lem:haagerup-analytic-boundary-compactness}
Let $\mathcal M$ be a {$\sigma$-finite} von Neumann algebra, let
$d\in L_1(\mathcal M)_+$ be faithful with $\operatorname{Tr}d=1$,
and let $2<p<r<\infty$. Put
\[
 \delta\coloneqq\frac12-\frac1r,\qquad
 \theta\coloneqq\frac{1/2-1/p}{\delta}\in(0,1),\qquad
 r_0\coloneqq2,\quad r_1\coloneqq r.
\]
Suppose $(x_n)$ is bounded and weakly null in $L_p(\mathcal M)$,
and write $x_n=u_n h_n^{1/p}$, where $h_n\coloneqq|x_n|^p$.
Define, for $z\in\mathbb S$ and $t\in\mathbb R$,
\[
 F_n(z)\coloneqq d^{z\delta}u_n h_n^{1/2-z\delta},\qquad
 V_{n,j}(t)\coloneqq d^{it\delta}u_n h_n^{1/r_j-it\delta}
 \quad(j=0,1).
\]
Each $F_n:\mathbb S\to L_2(\mathcal M)$ is bounded
and analytic on $\mathbb S^\circ$, with
$F_n(j+it)=d^{j\delta}V_{n,j}(t)$.
The functions $F_n$ are weakly continuous on $\mathbb S$, and
the $V_{n,j}$ are bounded and strongly measurable. There are
integers $1\le n_1<n_2<\cdots$, a bounded, weakly continuous
$G:\mathbb S\to L_2(\mathcal M)$ analytic on $\mathbb S^\circ$,
and bounded, strongly measurable
$g_j:\mathbb R\to L_{r_j}(\mathcal M)$ such that
\[
 \begin{aligned}
 G(\theta)&=0,&
 F_{n_k}(z)&\rightharpoonup G(z)
     &&\text{in }L_2(\mathcal M)\quad(z\in\mathbb S),\\
 G(j+it)&=d^{j\delta}g_j(t),&
 V_{n_k,j}(t)&\rightharpoonup g_j(t)
     &&\text{in }L_{r_j}(\mathcal M)
       \quad(j=0,1,\ t\in\mathbb R).
 \end{aligned}
\]
\end{lemma}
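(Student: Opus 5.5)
The proof has three parts: the elementary properties of $F_n$ and $V_{n,j}$, a diagonal subsequence extraction using reflexivity, and the identification of the limit $G$ with $G(\theta)=0$.

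\emph{Properties of $F_n$ and $V_{n,j}$.} Write $x_n=u_nh_n^{1/p}$ with $u_n$ a partial isometry and $\|h_n\|_1=\|x_n\|_p^p\le B^p$, where $B\coloneqq\sup_n\|x_n\|_p$.

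For $z=s+it\in\mathbb S$, the factor $d^{it\delta}$ is a unitary on the support of $d$, which is $\mathbf 1$. Hence, by H\"older's inequality,
\[
 \|F_n(z)\|_2\le\|d^{s\delta}\|_{1/(s\delta)}\,\|h_n^{1/2-s\delta}\|_{1/(1/2-s\delta)}
 =\|h_n\|_1^{1/2-s\delta}\le \max(1,B^{p/2}).
\]
Here $1/2-s\delta\ge 1/r>0$, so $F_n$ is uniformly bounded in $n$ and $z$.

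On the boundary, $F_n(j+it)=d^{j\delta}\,d^{it\delta}u_nh_n^{1/r_j-it\delta}=d^{j\delta}V_{n,j}(t)$, since $1/2-\delta=1/r$. Moreover $\|V_{n,j}(t)\|_{r_j}\le\|h_n\|_1^{1/r_j}$.

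Analyticity in $\mathbb S^\circ$ and weak continuity on $\mathbb S$ follow from the standard Haagerup-$L_p$ argument, as in Kosaki's interpolation~\cite{Kosaki}. One pairs $F_n(z)$ with $y$ in a dense set of "analytic" elements of $L_2$ and uses the fact that $z\mapsto d^{z\delta}$ and $z\mapsto h_n^{-z\delta}$ are analytic on the relevant supports. Bounded weakly continuous functions that are weakly analytic on a dense set are analytic. Strong measurability of $V_{n,j}$ follows from weak continuity together with Pettis' theorem, since the $L_q$ spaces involved are separable after the $\sigma$-finite corner reduction; alternatively one may argue with weak measurability on a separable subspace.

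\emph{Subsequence extraction.} All the spaces $L_2$, $L_r$ are reflexive, and the families are uniformly bounded.
\begin{enumerate}
\item Choose countable dense sets $Q\subset\mathbb S$ and $T\subset\mathbb R$, both containing $\theta$.
\item By a diagonal argument, extract $(n_k)$ such that $F_{n_k}(z)$ converges weakly for every $z\in Q$.
\item Extending weak convergence from $Q$ to all of $\mathbb S$ requires equicontinuity. In the interior this comes from Cauchy estimates on bounded analytic functions, applied to pairings $\operatorname{Tr}(yF_n(z))$, which form a normal family; a further diagonal argument via Montel's theorem gives locally uniform convergence of these scalar functions for $y$ in a countable dense set. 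The limit $G$ is then bounded, weakly analytic, hence analytic on $\mathbb S^\circ$.
\item Equicontinuity fails at the boundary, so there I would instead extract, for each fixed $t\in T$, weak limits of $V_{n_k,j}(t)$.
\end{enumerate}
The statement requires convergence for every $t\in\mathbb R$. I would handle this by using the explicit form $V_{n,j}(t)=d^{it\delta}\,(u_nh_n^{1/r_j})\,h_n^{-it\delta}$. The left factor is a fixed strongly continuous unitary group, so it is harmless. The right factor $h_n^{-it\delta}$ depends on $n$, and I would treat it by Fourier/Bochner analysis: pass to a subsequence such that the $\mathbb R$-valued spectral data converge, for instance via weak-$*$ limits of the joint distributions $t\mapsto\operatorname{Tr}(a\,u_nh_n^{1/r_j-it\delta}b)$, which are positive-definite-type Fourier transforms of bounded measures. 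Helly selection then gives pointwise convergence for all $t$.

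\emph{Identification of the limit and $G(\theta)=0$.} At $z=\theta$ we have $\theta\delta=1/2-1/p$, so $F_n(\theta)=d^{1/2-1/p}x_n$. Because $d^{1/2-1/p}\in L_{r'}$ with $1/r'=1/2-1/p$, left multiplication by it is bounded $L_p\to L_2$. Hence $F_n(\theta)\rightharpoonup0$ and $G(\theta)=0$.

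The boundary identity $G(j+it)=d^{j\delta}g_j(t)$ should follow from the Poisson representation of bounded analytic functions on the strip. Pairing with $y$, the scalar functions $\operatorname{Tr}(yF_{n_k}(z))$ are Poisson integrals of their boundary values. By dominated convergence, using the pointwise weak limits $g_j$, their limits are Poisson integrals of $\operatorname{Tr}(y\,d^{j\delta}g_j(t))$. This identifies $G$ as the Poisson extension of $d^{j\delta}g_j$, and weak continuity of $G$ up to the boundary then follows.

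\emph{Main obstacle.} I expect the hardest step to be obtaining pointwise boundary convergence for every $t\in\mathbb R$ simultaneously, with a measurable limit that is consistent with the interior limit. If the Helly/Bochner argument fails, I would fall back on weak-$*$ convergence of $V_{n_k,j}$ in $L_\infty(\mathbb R;L_{r_j})=L_1(\mathbb R;L_{r_j'})^*$. This yields an a.e.-defined $g_j$, which would then be redefined via the nontangential boundary values of the limit $G$.
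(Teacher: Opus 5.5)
Your interior argument (normal families) and your proof that $G(\theta)=0$ are fine. The genuine gap is on the boundary lines. The statement requires $V_{n_k,j}(t)\rightharpoonup g_j(t)$ for \emph{every} $t\in\mathbb R$, together with $G(j+it)=d^{j\delta}g_j(t)$ and weak continuity of $G$ on the closed strip. None of your proposed routes delivers this.

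\emph{Helly selection.} This fails as stated. Already for the measures $\delta_{\lambda_n}$ with $\lambda_n\to\infty$, no subsequence of the Fourier transforms $e^{i\lambda_n t}$ converges at every $t$: a pointwise limit would be unimodular, yet by dominated convergence it would integrate to zero over every interval.

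\emph{Splitting off $d^{it\delta}$.} In Haagerup's setting $d^{it\delta}$ and $h_n^{-it\delta}$ lie only in the core $\mathcal R$. Only the product $w_n(t)=d^{it\delta}u_nh_n^{-it\delta}$ belongs to $\mathcal M$. So $d^{it\delta}$ is not a harmless factor you can remove, and $\operatorname{Tr}(a\,u_nh_n^{1/r_j-it\delta}b)$ is not even defined.

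\emph{The weak-$*$ fallback.} Working in $L_\infty(\mathbb R;L_{r_j})$ gives $g_j$ only almost everywhere. Redefining $g_j$ through boundary values of $G$ does not give convergence at every $t$. Moreover, a Poisson extension of merely bounded measurable boundary data need not be weakly continuous up to the boundary.

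The missing idea is that your premise ``equicontinuity fails at the boundary'' is false for the weighted functions $B_n(z)\coloneqq d^{1/2}F_n(z)=d^{1/2+\delta z}u_nh_n^{1/2-\delta z}$, viewed in $L_1(\mathcal M)$. Since $w_n(t)\in\mathcal M$ satisfies $w_n(t)^*w_n(t)=s(h_n)$, H\"older gives $\|B_n(z)\|_1\le M$ on the wider strip $\Omega=\{|\operatorname{Re}z|<1/(2\delta)\}$, where $0<\operatorname{Re}(1/2+\delta z)<1$. Moreover $B_n$ is $L_1$-analytic there. Cauchy estimates on discs of a fixed radius around points of $\mathbb S$ then give $\|B_n(z)-B_n(w)\|_1\le C|z-w|$ on the \emph{closed} strip, uniformly in $n$. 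A diagonal extraction on a countable dense subset of $\mathbb S$ therefore yields $B_{n_k}(z)\rightharpoonup B(z)$ weakly in $L_1$ at every $z\in\mathbb S$, boundary included, with $B$ Lipschitz.

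To return to $L_2$ and $L_r$, use the injective contractions $J_jy\coloneqq d^{1/2+j\delta}y$ from $L_{r_j}$ to $L_1$. They satisfy $J_0F_n(z)=B_n(z)$ and $J_jV_{n,j}(t)=B_n(j+it)$. On bounded, hence weakly compact, subsets of $L_{r_j}$ the inverse $J_j^{-1}$ is weakly continuous. This gives $F_{n_k}(z)\rightharpoonup G(z)\coloneqq J_0^{-1}B(z)$ and $V_{n_k,j}(t)\rightharpoonup g_j(t)\coloneqq J_j^{-1}B(j+it)$ for all $z$ and all $t$. The identity $G(1+it)=d^{\delta}g_1(t)$ is just injectivity of $J_0$, and weak continuity follows from the Lipschitz bound. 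No Poisson representation is needed.

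Finally, $\sigma$-finiteness does not make $L_q(\mathcal M)$ separable. For Pettis' theorem, use separability of $\overline{\operatorname{span}}\{f(q):q\in\mathbb Q\}$ together with weak continuity.
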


\begin{proof}
Put
\[
 M\coloneqq\max\{1,\sup_n\operatorname{Tr}h_n\},\qquad
 a(z)\coloneqq\tfrac12+\delta z\quad(z\in\mathbb C).
\]
Set $\Omega\coloneqq\{z\in\mathbb C:
-1/(2\delta)<\operatorname{Re}z<1/(2\delta)\}$.
Since $0<\delta<1/2$, we have $\mathbb S\subset\Omega$ and
$0<\operatorname{Re}a(z)<1$ for $z\in\Omega$.
\par
\begin{samepage}
Put $k_n\coloneqq u_nh_nu_n^*\in L_1(\mathcal M)_+$.
The identity $k_n^{1-a(z)}u_n=u_nh_n^{1-a(z)}$ and
\cite[Lemma~9.20]{HiaiModular} give an
$L_1(\mathcal M)$-analytic\footnote{That is,
$B_n'(z)=\lim_{\zeta\to0}(B_n(z+\zeta)-B_n(z))/\zeta$
in the $L_1(\mathcal M)$ norm for every $z\in\Omega$.}
map $B_n:\Omega\to L_1(\mathcal M)$:
\[
 B_n(z):=d^{a(z)}k_n^{1-a(z)}u_n
          =d^{a(z)}u_nh_n^{1-a(z)}\quad(z\in\Omega).
\]
\end{samepage}
For $z\in\mathbb S$, we have $B_n(z)=d^{1/2}F_n(z)$.
For $t\in\mathbb R$, put
\[
 w_n(t)\coloneqq d^{it\delta}u_nh_n^{-it\delta},
\]
where $h_n^{-it\delta}$ is taken on $s(h_n)$.
Since $d,h_n\in L_1(\mathcal M)_+$ and $u_n\in\mathcal M$,
the dual-action scaling law \cite[(1.12)]{HJX} gives
\[
 \widehat\sigma_v(w_n(t))
 =(e^{-v}d)^{it\delta}u_n(e^{-v}h_n)^{-it\delta}
 =e^{-ivt\delta}e^{ivt\delta}w_n(t)=w_n(t)
       \qquad(v\in\mathbb R).
\]
Here $(e^{-v}h_n)^{-it\delta}=e^{ivt\delta}h_n^{-it\delta}$
on $s(h_n)$.
Thus $w_n(t)\in\mathcal R^{\widehat\sigma}=\mathcal M$
\cite[(1.3)]{HJX}. Since $d$ is faithful and
$u_n^*u_n=s(h_n)$,
\[
 w_n(t)^*w_n(t)
 =h_n^{it\delta}u_n^*d^{-it\delta}d^{it\delta}
       u_nh_n^{-it\delta}
 =h_n^{it\delta}s(h_n)h_n^{-it\delta}=s(h_n).
\]
Thus $|w_n(t)h_n^\beta|^2=h_n^\beta s(h_n)h_n^\beta
=h_n^{2\beta}$ for $\beta>0$.
Let $\alpha\ge0$, $\beta>0$, and $1/q=\alpha+\beta\le1$.
Put $p_\alpha\coloneqq1/\alpha$ if $\alpha>0$, and $p_0\coloneqq\infty$;
in particular, $\left\|d^0\right\|_{p_0}=1$.
H\"older's inequality \cite[Proposition~9.17]{HiaiModular}
gives
\[
 \begin{aligned}
 \left\|d^{\alpha+it\delta}u_nh_n^{\beta-it\delta}\right\|_q
 &=\left\|d^\alpha w_n(t)h_n^\beta\right\|_q\\
 &\le \left\|d^\alpha\right\|_{p_\alpha}
       \left\|w_n(t)h_n^\beta\right\|_{1/\beta}\\
 &=(\operatorname{Tr}d)^\alpha
   (\operatorname{Tr}h_n)^\beta\le M^\beta.
 \end{aligned}
\]
Taking
$(\alpha,\beta,q)=(s\delta,1/2-s\delta,2)$ for $F_n(s+it)$,
$(0,1/r_j,r_j)$ for $V_{n,j}(t)$, and
$(\operatorname{Re}a(z),1-\operatorname{Re}a(z),1)$ for
$B_n(z)$, with $t=\operatorname{Im}z$ in the last case, yields
\begin{equation}\label{eq:haagerup-basic-bounds}
 \begin{aligned}
 \left\|F_n(s+it)\right\|_2&\le M^{1/2-s\delta}
      &&(0\le s\le1),\\
 \left\|V_{n,j}(t)\right\|_{r_j}&\le M^{1/r_j}
      &&(j=0,1),\\
 \left\|B_n(z)\right\|_1&\le M^{1-\operatorname{Re}a(z)}
      &&(z\in\Omega).
 \end{aligned}
\end{equation}
For $b\in\mathcal M$, we have
\[
 \operatorname{Tr}\bigl((bd^{1/2})F_n(z)\bigr)
 =\operatorname{Tr}(bB_n(z))\qquad(z\in\mathbb S).
\]
Since $\mathcal M d^{1/2}$ is dense in $L_2(\mathcal M)$
\cite[Section~5, before Proposition~5.5]{HJX}, for each
$y\in L_2(\mathcal M)$ we may choose $b_m\in\mathcal M$ with
$b_md^{1/2}\to y$ in $L_2(\mathcal M)$. Since $M\ge1$, the first
estimate in \eqref{eq:haagerup-basic-bounds} gives
$\sup_{z\in\mathbb S}\left\|F_n(z)\right\|_2\le M^{1/2}$. Hence
\begin{equation}\label{eq:weak-pairing-uniform-convergence}
 \sup_{z\in\mathbb S}
 \bigl|\operatorname{Tr}(yF_n(z))
       -\operatorname{Tr}(b_mB_n(z))\bigr|
 \le M^{1/2}\left\|y-b_md^{1/2}\right\|_2\longrightarrow0.
\end{equation}
Since $|\operatorname{Tr}(b_mv)|\le\left\|b_m\right\|_\infty\left\|v\right\|_1$,
it follows from
\cite[Corollary~1.2.4]{MennickenMoller} that
$z\mapsto\operatorname{Tr}(b_mB_n(z))$ is analytic on $\Omega$.
By \eqref{eq:weak-pairing-uniform-convergence},
$z\mapsto\operatorname{Tr}(yF_n(z))$ is analytic on
$\mathbb S^\circ$ and continuous on $\mathbb S$ for every $y$.
By the weak-holomorphy criterion
\cite[Theorem~3.31(c)]{RudinFA}, $F_n$ is
$L_2$-analytic on $\mathbb S^\circ$.
The last estimate in \eqref{eq:haagerup-basic-bounds} gives
$\sup_{n,z\in\Omega}\left\|B_n(z)\right\|_1\le M$.
Set $\rho\coloneqq1/(2r\delta)$. Since
$\operatorname{dist}(\mathbb S,\partial\Omega)
=1/(2\delta)-1=1/(r\delta)=2\rho$,
we have $\overline{D(z,\rho)}\subset\Omega$ for $z\in\mathbb S$.
The $L_1$-valued Cauchy integral formula
\cite[Theorem~3.31(b)]{RudinFA} gives
\[
 \begin{aligned}
 B_n'(z)&=\frac{1}{2\pi i}\int_{|\zeta-z|=\rho}
            \frac{B_n(\zeta)}{(\zeta-z)^2}\,d\zeta,\\
 \left\|B_n'(z)\right\|_1
 &\le\frac{1}{2\pi}(2\pi\rho)\frac{M}{\rho^2}
   =\frac M\rho \qquad(z\in\mathbb S).
 \end{aligned}
\]
Consequently, with $C\coloneqq M/\rho=2r\delta M$,
\begin{equation}\label{eq:haagerup-common-L1-lipschitz}
 \left\|B_n(z)-B_n(w)\right\|_1\le C|z-w|
 \qquad(z,w\in\mathbb S).
\end{equation}
For $j=0,1$, set $\alpha_j\coloneqq1/2+j\delta=1-1/r_j$ and define
\[
 J_j:L_{r_j}(\mathcal M)\to L_1(\mathcal M),
 \qquad J_jy\coloneqq d^{\alpha_j}y.
\]
By H\"older's inequality \cite[Section~1.2]{HJX} and
$\operatorname{Tr}d=1$,
\[
 \left\|J_jy\right\|_1\le
 \left\|d^{\alpha_j}\right\|_{1/\alpha_j}\left\|y\right\|_{r_j}
 =(\operatorname{Tr}d)^{\alpha_j}\left\|y\right\|_{r_j}
 =\left\|y\right\|_{r_j}.
\]
Since $s(d)=\mathbf{1}$, each $J_j$ is injective.
Moreover, $J_0F_n(z)=B_n(z)$ and
$J_jV_{n,j}(t)=B_n(j+it)$.
Write $B_{L_{r_j}(\mathcal M)}$ for the closed unit ball.
For $R>0$, reflexivity makes $RB_{L_{r_j}(\mathcal M)}$
weakly compact. Boundedness of $J_j$ makes
$J_j(RB_{L_{r_j}(\mathcal M)})$ weakly compact in $L_1$.
The restriction of $J_j$ to $RB_{L_{r_j}(\mathcal M)}$ is a
weakly continuous bijection onto the Hausdorff space
$J_j(RB_{L_{r_j}(\mathcal M)})$. Hence
$J_j^{-1}:J_j(RB_{L_{r_j}(\mathcal M)})
\to RB_{L_{r_j}(\mathcal M)}$ is weakly continuous
\cite[Corollary~0.10]{Teschl}.

Choose a countable dense set $D\subset\mathbb S$.
By \eqref{eq:haagerup-basic-bounds},
$F_n(z)\in M^{1/2}B_{L_2(\mathcal M)}$ for $z\in\mathbb S$.
For each $z\in D$, weak sequential compactness of
$M^{1/2}B_{L_2(\mathcal M)}$ and boundedness of $J_0$ give a
weakly convergent subsequence of $(B_n(z))_n$ in $L_1$.
Since $D$ is countable, a diagonal subsequence $(B_{n_k})_k$
converges weakly at every $z\in D$.
Estimate \eqref{eq:haagerup-common-L1-lipschitz}
extends the convergence to $\mathbb S$ and gives
\[
 B_{n_k}(z)\rightharpoonup B(z)\quad\text{in }L_1
 \quad(z\in\mathbb S),\qquad
 \left\|B(z)-B(w)\right\|_1\le C|z-w|.
\]
Recall that $\delta=1/2-1/r$, so $1/2-\delta=1/r$.
For $z=s+it\in\mathbb S$, \eqref{eq:haagerup-basic-bounds} gives
$\left\|F_{n_k}(z)\right\|_2\le M^{1/2-s\delta}\le M^{1/2}$ and
$\left\|V_{n_k,1}(t)\right\|_r\le M^{1/r}$. Since
\[
 \begin{aligned}
 B_{n_k}(z)
 &=d^{1/2}\bigl(d^{z\delta}u_{n_k}h_{n_k}^{1/2-z\delta}\bigr)
   =J_0F_{n_k}(z)
 \in J_0(M^{1/2}B_{L_2(\mathcal M)})
   \qquad(z\in\mathbb S),\\
 B_{n_k}(1+it)
 &=d^{1/2+\delta}
   \bigl(d^{it\delta}u_{n_k}h_{n_k}^{1/r-it\delta}\bigr)
   =J_1V_{n_k,1}(t)
 \in J_1(M^{1/r}B_{L_r(\mathcal M)})
   \qquad(t\in\mathbb R),
 \end{aligned}
\]
it follows from the weak closedness of
$J_j(RB_{L_{r_j}(\mathcal M)})$ that
$B(z)\in J_0(M^{1/2}B_{L_2})$ and
$B(1+it)\in J_1(M^{1/r}B_{L_r})$. Set
\[
 G(z)\coloneqq J_0^{-1}B(z),\qquad
 g_0(t)\coloneqq G(it),\qquad g_1(t)\coloneqq J_1^{-1}B(1+it).
\]
Since $J_0(d^\delta g_1(t))=J_1g_1(t)
=B(1+it)=J_0G(1+it)$ and $J_0$ is injective,
\[
 G(1+it)=d^\delta g_1(t)\qquad(t\in\mathbb R).
\]
The inverse $J_0^{-1}$ is weakly continuous on
$J_0(M^{1/2}B_{L_2})$, and $J_1^{-1}$ is weakly continuous on
$J_1(M^{1/r}B_{L_r})$.
Applying $J_0^{-1}$ to $B_{n_k}(z)\rightharpoonup B(z)$ and
$J_1^{-1}$ at $z=1+it$ gives
\begin{equation}\label{eq:haagerup-Fn-weak-limit}
 \begin{aligned}
 F_{n_k}(z)=J_0^{-1}B_{n_k}(z)&\rightharpoonup G(z)
   &&\text{in }L_2\quad(z\in\mathbb S),\\
 V_{n_k,j}(t)=J_j^{-1}B_{n_k}(j+it)&\rightharpoonup g_j(t)
   &&\text{in }L_{r_j}\quad(j=0,1,\ t\in\mathbb R).
 \end{aligned}
\end{equation}
\Needspace{5\baselineskip}
Let $z_m\to z$ in $\mathbb S$ and $t_m\to t$ in $\mathbb R$.
The estimate \eqref{eq:haagerup-common-L1-lipschitz} and the
bound for $B$ give
\[
 \left\|B_n(z_m)-B_n(z)\right\|_1+\left\|B(z_m)-B(z)\right\|_1
 \le 2C|z_m-z|\longrightarrow0.
\]
Apply the weakly continuous $J_0^{-1}$ on
$J_0(M^{1/2}B_{L_2})$ and $J_j^{-1}$ on
$J_j(M^{1/r_j}B_{L_{r_j}})$, taking $z_m=j+it_m$ for the
boundary values. Then, for $j=0,1$,
\begin{equation}\label{eq:boundary-weak-continuity}
 \begin{aligned}
 F_n(z_m)&\rightharpoonup F_n(z),&
 G(z_m)&\rightharpoonup G(z)&&\text{in }L_2,\\
 V_{n,j}(t_m)&\rightharpoonup V_{n,j}(t),&
 g_j(t_m)&\rightharpoonup g_j(t)&&\text{in }L_{r_j}.
 \end{aligned}
\end{equation}
We claim that $V_{n,j}$ and $g_j$ are strongly measurable.
Indeed, for $f=V_{n,j}$ or $f=g_j$, set
\[
 E_f:=\overline{\operatorname{span}}^{\left\|\cdot\right\|}
     \{f(q):q\in\mathbb Q\}.
\]
For $x\notin E_f$, the closed-subspace Hahn--Banach theorem
\cite[Theorem~3.5]{RudinFA} supplies $\ell\in L_{r_j}^*$ with
$\ell|_{E_f}=0$ and $\ell(x)=1$. Consequently,
$E_f=\bigcap_{\ell|_{E_f}=0}\ker\ell$ is weakly closed:
each $\ker\ell=\ell^{-1}(\{0\})$ is weakly closed because
$\ell$ is weakly continuous \cite[Section~3.11]{RudinFA}.
For each $t\in\mathbb R$, choose $q_m\in\mathbb Q$ with
$q_m\to t$. By \eqref{eq:boundary-weak-continuity},
$f(q_m)\rightharpoonup f(t)$; since $f(q_m)\in E_f$, we obtain
$f(t)\in E_f$. Equation~\eqref{eq:boundary-weak-continuity}
also gives $\ell(f(t_m))\to\ell(f(t))$ whenever $t_m\to t$
and $\ell\in L_{r_j}^*$. Thus $\ell\circ f$ is continuous and
therefore Borel measurable \cite[Section~1.11]{RudinRCA}, hence
Lebesgue measurable. Since $E_f$ is separable
(as $\mathbb Q$ is countable) and $f$ is weakly measurable,
it follows from Pettis' theorem \cite[Theorem~II.1.2]{DU}
that $f$ is strongly measurable.

Fix $y\in L_2$ and put
$\varphi_{k,y}(z)\coloneqq\operatorname{Tr}(yF_{n_k}(z))$.
Since $|\operatorname{Tr}(yv)|\le\left\|y\right\|_2\left\|v\right\|_2$ for $v\in L_2$,
it follows from the $L_2$-analyticity of $F_{n_k}$ and
\cite[Corollary~1.2.4]{MennickenMoller} that
$\varphi_{k,y}$ is analytic on $\mathbb S^\circ$.
Equations~\eqref{eq:haagerup-basic-bounds} and
\eqref{eq:haagerup-Fn-weak-limit} give, respectively,
\[
 |\varphi_{k,y}(z)|\le M^{1/2}\left\|y\right\|_2,
 \qquad
 \varphi_{k,y}(z)\longrightarrow\operatorname{Tr}(yG(z))
 \quad(z\in\mathbb S^\circ).
\]
By Vitali's theorem \cite[Theorem~13.5]{StollComplex},
$z\mapsto\operatorname{Tr}(yG(z))$ is analytic. Moreover,
$\left\|G(z)\right\|_2\le M^{1/2}$ by weak lower semicontinuity.
Since $y$ was arbitrarily taken, the weak-holomorphy criterion
\cite[Theorem~3.31(c)]{RudinFA} shows that $G$ is
$L_2$-analytic on $\mathbb S^\circ$.

Finally, $1/2-\theta\delta=1/p$, so the definition of $F_n$
and the polar decomposition $x_n=u_nh_n^{1/p}$ give
\[
 F_{n_k}(\theta)
 =d^{\theta\delta}u_{n_k}h_{n_k}^{1/2-\theta\delta}
 =d^{\theta\delta}x_{n_k}.
\]
Also $1/p'=1/2+\theta\delta$, and H\"older's inequality gives
\[
 \left\|yd^{\theta\delta}\right\|_{p'}
 \le\left\|y\right\|_2\left\|d^{\theta\delta}\right\|_{1/(\theta\delta)}
 =\left\|y\right\|_2\qquad(y\in L_2).
\]
Since $x_{n_k}\rightharpoonup0$ in $L_p$ and
$yd^{\theta\delta}\in L_{p'}$, it follows that
\[
 \operatorname{Tr}\bigl(yF_{n_k}(\theta)\bigr)
 =\operatorname{Tr}\bigl((yd^{\theta\delta})x_{n_k}\bigr)
 \longrightarrow0\qquad(y\in L_2).
\]
By \eqref{eq:haagerup-Fn-weak-limit} at $z=\theta$,
$\operatorname{Tr}(yG(\theta))=0$ for every $y\in L_2$,
i.e., $G(\theta)=0$.
\end{proof}

\begin{lemma}\label{lem:weighted-boundary-interpolation}
Let $\mathcal N$ be a von Neumann algebra and let
$d\in L_1(\mathcal N)_+$ be faithful. Fix $2<r<\infty$ and
$0<\theta<1$. Set
$\mathbb S\coloneqq\{z\in\mathbb C:0\le\operatorname{Re}z\le1\}$.
Write $L_q\coloneqq L_q(\mathcal N)$ and set
\[
\delta\coloneqq\frac12-\frac1r,\qquad
 \frac1p\coloneqq\frac12-\theta\delta,\qquad
(r_0,r_1)\coloneqq(2,r),\qquad
 (\lambda_0,\lambda_1)\coloneqq(1-\theta,\theta).
\]
For $j=0,1$, define $P_{\theta,j}$ and $\nu_j$ by
\begin{equation}\label{eq:strip-boundary-measures}
 \begin{gathered}
 P_{\theta,j}(t)
 \coloneqq\frac{\sin(\pi\theta)}
 {2\bigl(\cosh(\pi t)-(-1)^j\cos(\pi\theta)\bigr)},
 \qquad d\nu_j(t)\coloneqq\frac{P_{\theta,j}(t)}{\lambda_j}\,dt,\\
 \int_{\mathbb R}P_{\theta,j}(t)\,dt=\lambda_j,
 \qquad \int_{\mathbb R}d\nu_j(t)=1
 \quad(j=0,1).
 \end{gathered}
\end{equation}

Here $dt$ denotes Lebesgue measure in the real
variable $t$, while $d$ is the fixed element of $L_1(\mathcal N)$.
Suppose
$H:\mathbb S^\circ\to L_2$ is bounded and analytic, and
$H_j:\mathbb R\to L_{r_j}$ are bounded and strongly measurable,
with weak $L_2$ limits
\[
 H(s+it)\rightharpoonup d^{j\delta}H_j(t)
 \quad(s\to j,\ 0<s<1;\ j=0,1,\ t\in\mathbb R).
\]
Then there is a unique $y\in L_p$ such that
$H(\theta)=d^{\theta\delta}y$, and
\begin{equation}\label{eq:integrated-kosaki}
 \left\|y\right\|_p
 \le\left(\int_{\mathbb R}\left\|H_0(t)\right\|_2^2\,d\nu_0(t)\right)^{(1-\theta)/2}
   \left(\int_{\mathbb R}\left\|H_1(t)\right\|_r^r\,d\nu_1(t)\right)^{\theta/r}.
\end{equation}
\end{lemma}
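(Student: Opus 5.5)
We prove the lemma by a duality argument combined with the three-lines (Poisson–Jensen) estimate on the strip. Define $y$ first. For $w\in L_{p'}$, pair $H(\theta)$ against elements $b\,d^{\theta\delta}$ with $b$ in a dense class; the goal is a bound
\[
 |\operatorname{Tr}(w\,d^{-\theta\delta}H(\theta))|\le \Phi\,\|w\|_{p'},
\]
where $\Phi$ is the right side of \eqref{eq:integrated-kosaki}, interpreted through a dense subspace. More precisely, the dense subspace will be $\{w=b\,d^{\theta\delta}\cdots\}$, but it is cleaner to work with Kosaki's description. The map $y\mapsto d^{\theta\delta}y$ is an injective contraction $L_p\to L_2$, since $\|d^{\theta\delta}\|_{1/(\theta\delta)}=(\operatorname{Tr}d)^{\theta\delta}$; after normalizing $\operatorname{Tr}d=1$, or keeping the constant, it is injective because $s(d)=\mathbf 1$. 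Uniqueness of $y$ follows at once. For existence it suffices to show that the functional $\ell(v)\coloneqq\operatorname{Tr}(vH(\theta))$, defined for $v=w\,d^{\theta\delta}$ with $w$ in the dense set $\mathcal N d^{1/p'-\theta\delta}$-type elements of $L_{p'}$, satisfies $|\ell|\le\Phi\|w\|_{p'}$. By duality this gives $y\in L_p$ with $\operatorname{Tr}(wy)=\operatorname{Tr}(w\,d^{\theta\delta}\cdot)$-compatibility, hence $d^{\theta\delta}y=H(\theta)$, using density of $\mathcal N d^{1/2}$ in $L_2$ as in Lemma~\ref{lem:haagerup-analytic-boundary-compactness}.

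The main estimate comes from an analytic dual family. Fix $w\in L_{p'}$ with $\|w\|_{p'}=1$ and polar decomposition $w=k^{1/p'}v$, where $k=|w^*|^{p'}\in L_1^+$. Set
\[
 W(z)\coloneqq k^{\alpha(z)}v\,d^{\beta(z)},
\]
with affine $\alpha,\beta$ chosen so that the following hold. At $z=\theta$, $W(\theta)d^{\theta\delta}=w$. At $\operatorname{Re}z=0$, $W(it)$ pairs $L_2$ with norm $\|k^{1/2}\|_2=1$ against $H_0$. At $\operatorname{Re}z=1$, $W(1+it)d^{\delta}$ is, up to unitary-type factors $d^{is},k^{is}$, the element $k^{1/r'}$ of $L_{r'}$, which has norm $1$. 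Concretely, $\alpha(z)=(1-z)/2+z/r'$ and $\beta(z)=(\theta-z)\delta$. Then $\varphi(z)=\operatorname{Tr}(W(z)H(z))$ is bounded and analytic on $\mathbb S^\circ$, with boundary values
\[
 |\varphi(j+it)|\le\|H_j(t)\|_{r_j}.
\]
This uses the weak boundary convergence $H(s+it)\rightharpoonup d^{j\delta}H_j(t)$ and the cancellation of the $d^{\pm it\delta}$ factors, as in the $w_n(t)$ computation above. The value at $\theta$ is $\varphi(\theta)=\operatorname{Tr}(wy)$. The Poisson representation on the strip with kernels $P_{\theta,j}$ and subharmonicity of $\log|\varphi|$ give
\[
 \log|\varphi(\theta)|\le\sum_j\int\log\|H_j(t)\|_{r_j}P_{\theta,j}(t)\,dt
 =\sum_j\lambda_j\int\log\|H_j\|_{r_j}\,d\nu_j.
\]
Applying Jensen's inequality with respect to the probability measures $\nu_j$, in the form $\int\log f\,d\nu\le\frac1q\log\int f^q\,d\nu$ with $q=2$ and $q=r$, yields \eqref{eq:integrated-kosaki}. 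This is the exponent bookkeeping $(1-\theta)/2$ and $\theta/r$.

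The main obstacle is rigor in the Haagerup setting. Products like $k^{\alpha(z)}v\,d^{\beta(z)}$ with $\operatorname{Re}\beta<0$ (for $z$ near $1$ when $\theta<1$) are not automatically bounded. To handle this, we would first take $w$ from a dense class where $k$ and $d$ interact nicely, for instance $w=d^{1/(2p')}b\,d^{1/(2p')}$ with $b\in\mathcal N$ analytic for $\sigma^d$. The alternative is to absorb negative powers of $d$ into $H$ and write the pairing as $\operatorname{Tr}(k^{\alpha}v\,d^{\beta}H)$ only where $d^{\beta}H$ makes sense via the boundary identification $H\approx d^{j\delta}H_j$. We would also need analyticity of $\varphi$ in the interior, obtained via \cite[Lemma~9.20]{HiaiModular} and the weak-holomorphy criterion as before, together with continuity up to the boundary in the weak sense, which follows from the hypothesis and boundedness. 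After the estimate on the dense class, density in $L_{p'}$ finishes the argument.
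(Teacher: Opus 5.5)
\textbf{Genuine gap: the dual analytic family is not well defined.} First, your explicit exponent does not meet your own requirements. With $\beta(z)=(\theta-z)\delta$ you get $W(\theta)=w$, not $W(\theta)d^{\theta\delta}=w$. Also, $W(it)$ carries $d^{(\theta-it)\delta}$, so it has the homogeneity of $L_{p'}$ (note $1/2+\theta\delta=1/p'$) rather than $L_2$, and cannot be paired with $H_0(t)$. The choice that meets your three requirements is $\beta(z)=-z\delta$, i.e.\ $W(z)=k^{\alpha(z)}v\,d^{-z\delta}$ with $\alpha(z)=1/2+z\delta$.

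With that choice, the gap is the one you flag but do not resolve. For $s=\operatorname{Re}z>0$, $W(z)$ contains $d^{-s\delta}$, which is not $\tau_{\mathcal R}$-measurable, and nothing places $W(z)$ in $L_2$, let alone uniformly in $z$. Already in the commutative case $\|W(s+it)\|_2^2=\int k^{1+2s\delta}d^{-2s\delta}$ can be infinite. So $\varphi(z)=\operatorname{Tr}(W(z)H(z))$ is not defined on $\mathbb S^\circ$, and a fortiori not known to be bounded and analytic there. Giving it a meaning amounts to knowing that $d^{-z\delta}H(z)$ lies in the appropriate $L_q$, which at $z=\theta$ is the conclusion of the lemma. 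The boundary step at $\operatorname{Re}z=1$ fails for the same reason. The hypothesis gives only $H(s+it)\rightharpoonup d^{\delta}H_1(t)$ weakly in $L_2$. To get $\varphi(s+it)\to\operatorname{Tr}(W(1+it)d^{\delta}H_1(t))$ you would need $W(s+it)\to W(1+it)$ in $L_2$-norm, but $W(1+it)$ is in general only in $L_{r'}d^{-\delta}$, not in $L_2$.

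\textbf{Your proposed repairs, and the paper's route.} The repairs you sketch do not close this. The dense class $w=bd^{1/p'}$ makes $\ell(w)=\operatorname{Tr}(bd^{1/2}H(\theta))$ well defined. However, $k=|w^*|^{p'}$ does not commute with $d$, so the polar-decomposition family only trades one singular factor for another: $W(z)=|w^*|^{p'\delta(z-\theta)}\,b\,d^{1/2+(\theta-z)\delta}$, which contains a negative power of $|w^*|$ for $\operatorname{Re}z<\theta$. A family built instead from $\sigma^d$-analytic $b$ is well defined, but its boundary norms are controlled by quantities like $\|b\|_\infty$, not by $\|w\|_{p'}$. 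It therefore yields only a bound of the form $|\operatorname{Tr}(wy)|\lesssim\|b\|_\infty$ times the right-hand side of \eqref{eq:integrated-kosaki}, which does not control $\|y\|_p$ by duality. Making the duality route rigorous essentially means reproving Kosaki's interpolation theorem for the dual couple.

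The paper avoids any pairing. Kosaki's identity $[d\mathcal N,L_1]_{1/q}=d^{1-1/q}L_q$ and reiteration give $[L_2,d^{\delta}L_r]_\theta=d^{\theta\delta}L_p$ isometrically. The argument then stays inside the abstract complex method for the couple $(L_2,d^{\delta}L_r)$, both of whose members lie in $L_2$, so no negative powers of $d$ ever occur:
\begin{itemize}
\item Peetre's averages $H_\varepsilon(z)=\varepsilon^{-1}\int_0^\varepsilon H(z+iu)\,du$ have norm-continuous boundary values, so Calder\'on's log-integral inequality applies to them.
\item That inequality, followed by Jensen, bounds $\|H_\varepsilon(\theta)\|_{X_\theta}$.
\item The same bound for $H_\varepsilon-H_\eta$, with Lebesgue differentiation, shows $H_\varepsilon(\theta)$ is Cauchy in $X_\theta$, with limit $H(\theta)=d^{\theta\delta}y$.
\end{itemize}
Your Poisson--Jensen and Jensen bookkeeping is the same as the paper's. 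What is missing is a rigorous object to apply it to.
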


\begin{proof}
Put
\[
 X_0\coloneqq L_2,\qquad X_1\coloneqq d^\delta L_r,
 \qquad X_\theta\coloneqq[X_0,X_1]_\theta.
\]
Equip $d\mathcal N$ with $\left\|da\right\|_{d\mathcal N}\coloneqq\left\|a\right\|_\infty$.
Kosaki's identity \cite[Theorem~9.1]{Kosaki},
applied after normalizing $d$ if necessary, is
\[
 [d\mathcal N,L_1(\mathcal N)]_{1/q}
 =d^{1-1/q}L_q(\mathcal N)\qquad(1<q<\infty).
\]
Reiteration \cite[Theorem~4.6.1]{BL}
and multiplication by $d^{-1/2}$ give
\begin{equation}\label{eq:weighted-kosaki-interpolation}
 \begin{aligned}
 [L_2,d^\delta L_r]_\theta
 &=d^{-1/2}[d^{1/2}L_2,d^{1-1/r}L_r]_\theta
 =d^{-1/2}d^{1-1/p}L_p=d^{\theta\delta}L_p,\\
 \left\|d^{\theta\delta}y\right\|_{[L_2,d^\delta L_r]_\theta}
 &=\left\|y\right\|_p\qquad(y\in L_p).
 \end{aligned}
\end{equation}
Following Peetre's argument
\cite[Section~4, proof of the theorem]{PeetreHinf},
for $\varepsilon>0$ define the analytic function $H_\varepsilon$
and its continuous boundary functions $H_{j,\varepsilon}$ by
\[
 \begin{aligned}
 H_\varepsilon(z)
 &\coloneqq\frac1\varepsilon\int_0^\varepsilon H(z+iu)\,du,\\
 H_\varepsilon(j+it)
 &=d^{j\delta}H_{j,\varepsilon}(t),&
 H_{j,\varepsilon}(t)
 &\coloneqq\frac1\varepsilon\int_0^\varepsilon H_j(t+u)\,du.
 \end{aligned}
\]
For $h\in\mathbb R$, the definition of
$H_{j,\varepsilon}$ gives
\[
 \begin{aligned}
 H_{j,\varepsilon}(t+h)-H_{j,\varepsilon}(t)
 &=\frac1\varepsilon\left(
   \int_{t+\varepsilon}^{t+\varepsilon+h}H_j(v)\,dv
   -\int_t^{t+h}H_j(v)\,dv\right),\\
 \left\|H_{j,\varepsilon}(t+h)-H_{j,\varepsilon}(t)\right\|_{r_j}
 &\le \frac{2|h|}{\varepsilon}
       \sup_{u\in\mathbb R}\left\|H_j(u)\right\|_{r_j}.
 \end{aligned}
\]
Thus $H_{j,\varepsilon}$ is norm continuous. Peetre's lemma
\cite[Section~2, Lemma]{PeetreHinf}
then gives an $L_2$-continuous extension of $H_\varepsilon$
to the closed strip
$\mathbb S=\{z\in\mathbb C:
0\le\operatorname{Re}z\le1\}$.
For $j=0,1$, the function $P_{\theta,j}$ defined in
\eqref{eq:strip-boundary-measures} is the Poisson kernel\footnote{
For a bounded harmonic function $u$ on $\mathbb S^\circ$
that is continuous on $\mathbb S$, the Poisson formula reads
\[
 u(\theta)=\sum_{k=0}^1\int_{\mathbb R}
    u(k+it)P_{\theta,k}(t)\,dt.
\]
Thus $P_{\theta,j}(t)\,dt$ is harmonic measure at $\theta$
on the boundary line $j+i\mathbb R$.}
at $\theta$ on the boundary line $\{j+it:t\in\mathbb R\}$ of $\mathbb S$;
see \cite[Chapter~8, Section~1, p.~216]{SteinShakarchiComplex}
after rotating the horizontal strip.
Moreover, $\nu_j(\mathbb R)=1$ for $j=0,1$.\footnote{
Let $u_0(z)\coloneqq1-\operatorname{Re}z$ and
$u_1(z)\coloneqq\operatorname{Re}z$. Since
$u_k(j+it)=\mathbf 1_{\{j=k\}}$, the Poisson formula gives
\[
 \begin{aligned}
 \int_{\mathbb R}P_{\theta,k}(t)\,dt
 &=\sum_{j=0}^1\int_{\mathbb R}
    u_k(j+it)P_{\theta,j}(t)\,dt\\
 &=u_k(\theta)
 =\begin{cases}
   1-\theta,&k=0,\\
   \theta,&k=1.
  \end{cases}
 \end{aligned}
\]
Consequently,
$\nu_k(\mathbb R)
=\lambda_k^{-1}\int_{\mathbb R}P_{\theta,k}(t)\,dt=1$.}
Calder\'on's inequality
\cite[Section~4, formula~(1)]{PeetreHinf} applies directly to
$H_\varepsilon$. Together with Jensen's inequality
\cite[Theorem~3.3]{RudinRCA}, it gives
\[
 \log\left\|H_\varepsilon(\theta)\right\|_{X_\theta}
 \le\sum_{j=0}^1\lambda_j\int_{\mathbb R}
       \log\left\|H_{j,\varepsilon}(t)\right\|_{r_j}\,d\nu_j(t)
 \le\sum_{j=0}^1\frac{\lambda_j}{r_j}
       \log\int_{\mathbb R}
       \left\|H_{j,\varepsilon}(t)\right\|_{r_j}^{r_j}\,d\nu_j(t).
\]
Exponentiating the displayed inequality yields
\begin{equation}\label{eq:averaged-interpolation-bound}
 \left\|H_\varepsilon(\theta)\right\|_{X_\theta}
 \le\prod_{j=0}^1
 \left(\int_{\mathbb R}
   \left\|H_{j,\varepsilon}(t)\right\|_{r_j}^{r_j}
       \,d\nu_j(t)\right)^{\lambda_j/r_j}.
\end{equation}
By Bochner-valued Lebesgue differentiation
\cite[Theorem~2.3.4]{HNVW1},
$H_{j,\varepsilon}(t)\to H_j(t)$ for Lebesgue-almost every $t$.
Formula~\eqref{eq:strip-boundary-measures} shows that the same
holds for $\nu_j$-almost every $t$; boundedness of $H_j$
and dominated convergence then give
\begin{equation}\label{eq:boundary-average-convergence}
 \int_{\mathbb R}\left\|H_{j,\varepsilon}(t)-H_j(t)\right\|_{r_j}^{r_j}
       \,d\nu_j(t)\longrightarrow0\qquad(j=0,1).
\end{equation}
To apply \eqref{eq:averaged-interpolation-bound}
to a difference, set
$K_{\varepsilon,\eta}\coloneqq H_\varepsilon-H_\eta$. Then
\[
 K_{\varepsilon,\eta}(j+it)
 =d^{j\delta}\bigl(H_{j,\varepsilon}(t)-H_{j,\eta}(t)\bigr)
 \qquad(j=0,1).
\]
Repeating the derivation of
\eqref{eq:averaged-interpolation-bound} for
$K_{\varepsilon,\eta}$ gives
\[
 \left\|H_\varepsilon(\theta)-H_\eta(\theta)\right\|_{X_\theta}
 \le\prod_{j=0}^1
 \left(\int_{\mathbb R}
   \left\|H_{j,\varepsilon}(t)-H_{j,\eta}(t)\right\|_{r_j}^{r_j}
       \,d\nu_j(t)\right)^{\lambda_j/r_j}\longrightarrow0.
\]
Thus $H_\varepsilon(\theta)$ converges in
$X_\theta$, which is complete. Since
$H_\varepsilon(\theta)\to H(\theta)$ in $L_2$ and
H\"older's inequality gives
$\left\|d^{\theta\delta}v\right\|_2\le
(\operatorname{Tr}d)^{\theta\delta}\left\|v\right\|_p$, the
$X_\theta$-limit is
$H(\theta)=d^{\theta\delta}y$ for a unique $y\in L_p$.
By \eqref{eq:weighted-kosaki-interpolation},
\eqref{eq:averaged-interpolation-bound} and
\eqref{eq:boundary-average-convergence},
\[
 \begin{aligned}
 \left\|y\right\|_p
 &=\left\|d^{\theta\delta}y\right\|_{X_\theta}
   =\left\|H(\theta)\right\|_{X_\theta}\\
 &=\lim_{\varepsilon\downarrow0}
   \left\|H_\varepsilon(\theta)\right\|_{X_\theta}\\
 &\le\prod_{j=0}^1\left(
   \int_{\mathbb R}\left\|H_j(t)\right\|_{r_j}^{r_j}
   \,d\nu_j(t)\right)^{\lambda_j/r_j}\\
 &=\left(\int_{\mathbb R}\left\|H_0(t)\right\|_2^2\,d\nu_0(t)\right)^{(1-\theta)/2}
   \left(\int_{\mathbb R}\left\|H_1(t)\right\|_r^r\,d\nu_1(t)\right)^{\theta/r}.
 \end{aligned}
\]
The final equality uses
$\lambda_0/r_0=(1-\theta)/2$ and
$\lambda_1/r_1=\theta/r$.
\end{proof}

\begin{proof}[Proof of Theorem~\ref{thm:general-all-p-upper}]
If $\sup_n\left\|x_n\right\|_p=0$, the conclusion is immediate.
By homogeneity, without loss of generality, we may assume that
$\sup_n\left\|x_n\right\|_p\le1$.
Let $e$ be the projection in \eqref{eq:sigma-finite-support-reduction}.
Then $e\mathcal M e$ is $\sigma$-finite and
$x_n\in L_p(e\mathcal M e)$ for all $n$.
We replace $\mathcal M$ by the von Neumann algebra $e\mathcal M e$.
Choose a faithful
$d\in L_1(\mathcal M)_+$ with $\operatorname{Tr}d=1$.
Set
\begin{equation}\label{eq:theorem-interpolation-parameters}
 r\coloneqq2\lfloor p/2\rfloor+2,\qquad
 \delta\coloneqq\frac12-\frac1r,\qquad
 \theta\coloneqq\frac{1/2-1/p}{\delta}\in(0,1).
\end{equation}
Put $r_0\coloneqq2$ and $r_1\coloneqq r$.

\medskip
\noindent\textbf{Step 1.} We construct analytic functions $(K_n)$
with boundary values $K_n(j+it)=d^{j\delta}v_{n,j}(t)$, where
$(v_{n,j}(t))_n$ is uniformly bounded and weakly null in
$L_{r_j}(\mathcal M)$ for every $t\in\mathbb R$ and $j=0,1$.

Pass to the subsequence given by
Lemma~\ref{lem:haagerup-analytic-boundary-compactness}.
Relabel $x_{n_k}$, $F_{n_k}$, and $V_{n_k,j}$ as
$x_k$, $F_k$, and $V_{k,j}$, respectively, for $j=0,1$.
Write $K_n\coloneqq F_n-G$
($G$ is given by
Lemma~\ref{lem:haagerup-analytic-boundary-compactness}), and set
\[
 v_{n,j}(t)\coloneqq V_{n,j}(t)-g_j(t)\quad(j=0,1).
\]
Here $g_0,g_1$ are also supplied by
Lemma~\ref{lem:haagerup-analytic-boundary-compactness}. The boundary
identities in Lemma~\ref{lem:haagerup-analytic-boundary-compactness} give
\begin{equation}\label{eq:theorem-K-boundary-values}
 \begin{aligned}
 K_n(\theta)&=F_n(\theta)-G(\theta)=d^{\theta\delta}x_n,\\
 K_n(j+it)&=F_n(j+it)-G(j+it)\\
 &=d^{j\delta}\bigl(V_{n,j}(t)-g_j(t)\bigr)
  =d^{j\delta}v_{n,j}(t)\quad(j=0,1).
 \end{aligned}
\end{equation}
For $j\in\{0,1\}$, each map $t\mapsto v_{n,j}(t)$ is strongly
measurable (Lemma~\ref{lem:haagerup-analytic-boundary-compactness}).
Since $\operatorname{Tr}|x_n|^p\le1$, it follows that the constant
$M=\max\{1,\sup_n\operatorname{Tr}|x_n|^p\}$ in
\eqref{eq:haagerup-basic-bounds} equals $1$. That estimate and the
weak convergence in
\eqref{eq:haagerup-Fn-weak-limit} give, by weak lower
semicontinuity of the norm,
\[
 \left\|V_{n,j}(t)\right\|_{r_j}\le1,\qquad
 \left\|g_j(t)\right\|_{r_j}\le1,\qquad
 \left\|v_{n,j}(t)\right\|_{r_j}
 \le\left\|V_{n,j}(t)\right\|_{r_j}+\left\|g_j(t)\right\|_{r_j}\le2.
\]
Subtracting $g_j(t)$ from
\eqref{eq:haagerup-Fn-weak-limit}, we obtain
\begin{equation}\label{eq:theorem-boundary-limits}
 v_{n,j}(t)\xrightarrow[n\to\infty]{w}0
 \quad\text{in }L_{r_j}(\mathcal M),\qquad
 \sup_{n\in\mathbb N,\,t\in\mathbb R}\left\|v_{n,j}(t)\right\|_{r_j}\le2
 \quad(j\in\{0,1\}).
\end{equation}
The weak convergence in
\eqref{eq:theorem-boundary-limits} holds for every $t\in\mathbb R$.

Use the probability measures $\nu_0,\nu_1$ from
\eqref{eq:strip-boundary-measures}. For $q\in\{2,r\}$
set $j_2\coloneqq0$, $j_r\coloneqq1$, and
define
\begin{equation}\label{eq:distinct-q-tuples}
 \mathcal I_q\coloneqq
 \bigl\{(i_1,\ldots,i_q)\in\mathbb N^q:
 i_a\ne i_b\ \text{whenever }a\ne b\bigr\}.
\end{equation}
For $\mathbf i\in\mathcal I_q$, define
\begin{equation}\label{eq:Phi-q-definition}
 \Phi_q(\mathbf i)
 \coloneqq\int_{\mathbb R}\left|
   \operatorname{Tr}\big(
   v_{i_1,j_q}(t)^*v_{i_2,j_q}(t)\cdots
   v_{i_{q-1},j_q}(t)^*v_{i_q,j_q}(t)\big)
   \right|\,d\nu_{j_q}(t).
\end{equation}
\medskip
\noindent\textbf{Step 2.} We choose a subsequence $(x_{n_k})$
of $(x_n)$ such that
\begin{equation}\label{eq:general-analytic-moment-selection}
 \sum_{(i_1,\ldots,i_q)\in\mathcal I_q}
 \Phi_q(n_{i_1},\ldots,n_{i_q})\le1\qquad(q=2,r).
\end{equation}

Fix distinct $i_1,\ldots,i_{q-1}$, a position
$1\le\ell\le q$, and $t\in\mathbb R$. In the integrand of
\eqref{eq:Phi-q-definition}, set
\[
 \begin{gathered}
 \mathbf h(n)\coloneqq(i_1,\ldots,i_{\ell-1},n,i_\ell,\ldots,i_{q-1}),
 \qquad
 w_s(t)\coloneqq
 \begin{cases}
 v_{h_s(n),j_q}(t)^*,&s\text{ odd},\\
 v_{h_s(n),j_q}(t),&s\text{ even},
 \end{cases}\\
 C_\ell(t)\coloneqq w_{\ell+1}(t)\cdots w_q(t)
 w_1(t)\cdots w_{\ell-1}(t)
 \in L_{q/(q-1)}(\mathcal M).
 \end{gathered}
\]
The product $C_\ell(t)$ is independent of $n$.
Trace cyclicity and \eqref{eq:theorem-boundary-limits} now give
\[
 \operatorname{Tr}(w_1(t)\cdots w_q(t))
 =\begin{cases}
 \overline{\operatorname{Tr}(C_\ell(t)^*v_{n,j_q}(t))},
       &\ell\text{ odd},\\
 \operatorname{Tr}(C_\ell(t)v_{n,j_q}(t)),
       &\ell\text{ even}
 \end{cases}
 \xrightarrow[n\to\infty]{}0.
\]
H\"older's inequality and
\eqref{eq:theorem-boundary-limits} give the integrable bound
\[
 \bigl|\operatorname{Tr}(w_1(t)\cdots w_q(t))\bigr|
 \le\prod_{s=1}^q\left\|w_s(t)\right\|_q\le2^q.
\]
Since $\nu_{j_q}(\mathbb R)=1$ by
\eqref{eq:strip-boundary-measures}, it follows from dominated convergence that
\[
 \Phi_q(i_1,\ldots,i_{\ell-1},n,i_{\ell},\ldots,i_{q-1})
 \longrightarrow0\quad(n\to\infty,\ 1\le \ell\le q).
\]
Set $n_1\coloneqq1$. Given
$n_1<\cdots<n_{k-1}$ and $n>n_{k-1}$, define
\[
 \begin{aligned}
 \mathcal T_{q,k}(n)&\coloneqq
 \bigl\{\mathbf i\in\mathcal I_q\cap
 \{n_1,\ldots,n_{k-1},n\}^q:
 \max\{i_1,\ldots,i_q\}=n\bigr\},\\
 |\mathcal T_{q,k}(n)|&\le q(k-1)^{q-1}<\infty,\qquad
 \lim_{n\to\infty}\sum_{\mathbf i\in\mathcal T_{q,k}(n)}
 \Phi_q(\mathbf i)=0\quad(q=2,r).
 \end{aligned}
\]
For $k<q$, $\mathcal T_{q,k}(n)=\varnothing$.
Choose $n_k>n_{k-1}$ recursively, simultaneously for
$q=2,r$, so that
\begin{equation}\label{eq:moment-recursion}
 \sum_{\mathbf i\in\mathcal T_{q,k}(n_k)}
 \Phi_q(\mathbf i)\le2^{-k}\qquad(q=2,r,\ k\ge2).
\end{equation}
Relabel $v_{n_k,j}$, $K_{n_k}$, and $x_{n_k}$ as
$v_{k,j}$, $K_k$, and $x_k$.
For the relabeled sequence, $\Phi_q$ is computed using
\eqref{eq:Phi-q-definition}; thus summing
\eqref{eq:moment-recursion} by the maximal index of each tuple
in \eqref{eq:distinct-q-tuples} gives
\[
 \begin{aligned}
 \sum_{\mathbf i\in\mathcal I_q}\Phi_q(\mathbf i)
 &=\sum_{k=q}^\infty
 \sum_{\substack{\mathbf i\in\{1,\ldots,k\}^q\\
                  i_1,\ldots,i_q\text{ distinct},\
                  \max\{i_1,\ldots,i_q\}=k}}
 \Phi_q(\mathbf i)\\
 &\le\sum_{k=q}^\infty2^{-k}=2^{1-q}\le\frac12\le1
 \qquad(q=2,r).
 \end{aligned}
\]
The chosen sequence of indices $(n_k)$ depends only on the
boundary functions and is fixed before choosing $m,N$ or $(a_k)$.

Fix $m,N\in\mathbb N$ and $a_1,\ldots,a_N\in \mathbb{M}_m$. Let
$\varepsilon_1,\ldots,\varepsilon_N$ be independent Rademacher variables on
$\Omega\coloneqq\{-1,1\}^N$ with uniform probability measure $\mathbb P$.
For a matrix-valued function $f$ on $\Omega$, write
$\mathbb E f\coloneqq 2^{-N}\sum_{\omega\in\Omega}f(\omega)$. Put
\begin{equation}\label{eq:coefficient-normalizing-factor}
 L\coloneqq\left\|\sum_{k=1}^N \varepsilon_k\otimes a_k
     \right\|_{L_p(\Omega;S_p^m)}.
\end{equation}
If $L=0$, then
$\sum_{\ell=1}^N\varepsilon_\ell(\omega)a_\ell=0$ for every
$\omega\in\Omega$, since $\mathbb P(\{\omega\})=2^{-N}>0$.
By $\mathbb E(\varepsilon_k\varepsilon_\ell)=\delta_{k\ell}$, we have
\[
 a_k
 =\sum_{\ell=1}^N\mathbb E(\varepsilon_k\varepsilon_\ell)a_\ell
 =\mathbb E\Bigl(\varepsilon_k\sum_{\ell=1}^N\varepsilon_\ell a_\ell\Bigr)
 =0\qquad(1\le k\le N).
\]
Thus the estimate in Theorem~\ref{thm:general-all-p-upper}
is immediate. Suppose henceforth that $L>0$.

\medskip
\noindent\textbf{Step 3.} We construct bounded, continuous
functions $b_k:\mathbb S\to\mathbb M_m$, analytic on $\mathbb S^\circ$, such that
\[
 \begin{gathered}
 b_k(\theta)=a_k/L\qquad(1\le k\le N),\\[3pt]
 \rho_{r_j}\bigl(b_1(j+it),\ldots,b_N(j+it)\bigr)\le1
 \qquad(j=0,1,\ t\in\mathbb R).
 \end{gathered}
\]

For each $\omega\in\Omega$, define $A(\omega)$ and take its polar decomposition:
\begin{equation}\label{eq:normalized-rademacher-function}
 \begin{aligned}
 A(\omega)&\coloneqq
 L^{-1}\sum_{k=1}^N\varepsilon_k(\omega)a_k
   \overset{\text{\cite[Theorem~4.10]{Teschl}}}{=}
   V(\omega)|A(\omega)|,\\
 |A(\omega)|&\coloneqq
 (A(\omega)^*A(\omega))^{1/2}.
 \end{aligned}
\end{equation}
Here $V(\omega)$ is a partial isometry in $\mathbb M_m$.
For $z\in\mathbb S$, set
\begin{equation}\label{eq:analytic-coefficient-functions}
 \begin{aligned}
 \mathcal A(z)(\omega)&\coloneqq
 V(\omega)|A(\omega)|^{p(1/2-z\delta)}
 \quad(\omega\in\Omega),\\
 b_k(z)&\coloneqq\mathbb E(\varepsilon_k\mathcal A(z))
 \quad(1\le k\le N,\ z\in\mathbb S).
 \end{aligned}
\end{equation}
Since $\Omega$ is finite, the argument for complex matrix powers in
\cite[Definition~7.16 and proof of Proposition~7.4, p.~60]{TroppMatrix}
applied to \eqref{eq:analytic-coefficient-functions} shows that
$\mathcal A$ and each $b_k$ are bounded and continuous on $\mathbb S$,
and analytic on $\mathbb S^\circ$.
Since
$p(1/2-\theta\delta)=1$ and
$\mathbb E(\varepsilon_k\varepsilon_\ell)=\delta_{k\ell}$,
it follows that
\begin{equation}\label{eq:coefficient-interpolation-value}
 b_k(\theta)
 =\mathbb E(\varepsilon_k A)
 =L^{-1}\sum_{\ell=1}^N
   \mathbb E(\varepsilon_k\varepsilon_\ell)a_\ell
 =a_k/L.
\end{equation}
For each $\xi\in\mathbb C^m$, apply scalar Bessel's
inequality \cite[Eq.~(1.11), p.~45]{Teschl}
to each coordinate of $\mathcal A(z)\xi$ and
$\mathcal A(z)^*\xi$, using the orthonormal family
$(\varepsilon_k)_{k=1}^N$ in $L_2(\Omega)$. Summing over the
coordinates gives the following inequalities:
\begin{equation}\label{eq:coefficient-bessel}
 \sum_{k=1}^N b_k(z)^*b_k(z)
 \le\mathbb E(\mathcal A(z)^*\mathcal A(z)),\qquad
 \sum_{k=1}^N b_k(z)b_k(z)^*
 \le\mathbb E(\mathcal A(z)\mathcal A(z)^*).
\end{equation}
The identity $V(\omega)^*V(\omega)=s(|A(\omega)|)$ gives
$|\mathcal A(j+it)(\omega)|=|A(\omega)|^{p/r_j}$,
where $r_0=2$ and $r_1=r$ as defined in
Lemma~\ref{lem:haagerup-analytic-boundary-compactness}.
Hence, for $j=0,1$ and $t\in\mathbb R$,
\begin{equation}\label{eq:coefficient-boundary-normalization}
 \left\|\mathcal A(j+it)\right\|_{L_{r_j}(\Omega;S_{r_j}^m)}^{r_j}
 =\mathbb E\operatorname{tr}|A|^p
 \overset{\eqref{eq:normalized-rademacher-function}}{=}
 L^{-p}\mathbb E\operatorname{tr}
 \left|\sum_{k=1}^N\varepsilon_k a_k\right|^p
 \overset{\eqref{eq:coefficient-normalizing-factor}}{=}1.
\end{equation}
Fix $j\in\{0,1\}$ and put $z=j+it$.
Since $r_j\ge2$, it follows from \eqref{eq:coefficient-bessel},
the triangle inequality, and scalar H\"older's inequality that
\begin{equation}\label{eq:coefficient-square-estimate}
 \begin{aligned}
 &\max\left\{
 \left\|\sum_k b_k(z)^*b_k(z)\right\|_{S_{r_j/2}^m},
 \left\|\sum_k b_k(z)b_k(z)^*\right\|_{S_{r_j/2}^m}\right\}\\
 &\quad\le\mathbb E\left\|\mathcal A(z)\right\|_{S_{r_j}^m}^{2}
 \le\bigl(\mathbb E\left\|\mathcal A(z)\right\|_{S_{r_j}^m}^{r_j}\bigr)^{2/r_j}
 \overset{\eqref{eq:coefficient-boundary-normalization}}{=}1.
 \end{aligned}
\end{equation}
It follows from \eqref{eq:coefficient-square-estimate}
and the definition \eqref{eq:rho-definition} that
\begin{equation}\label{eq:general-analytic-coefficients}
 \rho_{r_j}(b(j+it))
 \overset{\eqref{eq:coefficient-square-estimate}}{\le}
 1
 \overset{\eqref{eq:coefficient-boundary-normalization}}{=}
 \left\|\mathcal A(j+it)\right\|_{L_{r_j}(\Omega;S_{r_j}^m)}
 \quad(j=0,1,\ t\in\mathbb R),
\end{equation}
where $b=(b_1,\ldots,b_N)$.

\medskip
\noindent\textbf{Step 4.} We define $D,H,H_0,H_1$ by
\begin{equation}\label{eq:sum-function-definitions}
 \begin{gathered}
 D\coloneqq\mathbf{1}_{\mathbb M_m}\otimes d,\qquad
 (\operatorname{tr}\otimes\operatorname{Tr})(D)
 =\operatorname{tr}(\mathbf{1}_{\mathbb M_m})\operatorname{Tr}d=m,\\
 H(z)\coloneqq\sum_{k=1}^N b_k(z)\otimes K_k(z),\qquad
 H_j(t)\coloneqq\sum_{k=1}^N b_k(j+it)\otimes v_{k,j}(t),
 \end{gathered}
\end{equation}
and prove that
\[
 \int_{\mathbb R}\left\|H_0(t)\right\|_2^2\,d\nu_0(t)\le5.
\]

By \eqref{eq:theorem-K-boundary-values}, we have
\begin{equation}\label{eq:sum-boundary-values}
 H(j+it)
 =\sum_{k=1}^N b_k(j+it)\otimes d^{j\delta}v_{k,j}(t)
 =D^{j\delta}H_j(t)\quad(j=0,1).
\end{equation}
Since
$\operatorname{tr}(B^*B)=\operatorname{tr}(BB^*)$ for every
$B\in\mathbb M_m$, it follows from \eqref{eq:rho-definition}
and \eqref{eq:general-analytic-coefficients} with $j=0$ that
\[
 \begin{aligned}
 \sum_{k=1}^N\left\|b_k(it)\right\|_{S_2^m}^2
 &=\operatorname{tr}\left(\sum_{k=1}^N b_k(it)^*b_k(it)\right)\\
 &=\rho_2(b(it))^2
 \overset{\eqref{eq:general-analytic-coefficients}}{\le}1,
 \qquad t\in\mathbb R.
 \end{aligned}
\]
In particular,
\[
 \left\|b_k(it)\right\|_{S_2^m}
 \le\left(\sum_{\ell=1}^N\left\|b_\ell(it)\right\|_{S_2^m}^2\right)^{1/2}
 \le1\quad(1\le k\le N,\ t\in\mathbb R).
\]
Since $\left\|v_{k,0}(t)\right\|_2\le2$ by
\eqref{eq:theorem-boundary-limits} and $\nu_0(\mathbb R)=1$,
it follows that
\[
 \begin{aligned}
 &\sum_{k=1}^N\int_{\mathbb R}
   \left\|b_k(it)\right\|_2^2\left\|v_{k,0}(t)\right\|_2^2\,d\nu_0(t)\le4,\\
 &\bigl|\operatorname{tr}(b_i(it)^*b_k(it))\bigr|
   \le\left\|b_i(it)\right\|_2\left\|b_k(it)\right\|_2\le1\quad(i\ne k).
 \end{aligned}
\]
For $q=2$ and $j_2=0$, \eqref{eq:Phi-q-definition} reads
\[
 \Phi_2(i,k)
 =\int_{\mathbb R}\left|
 \operatorname{Tr}\bigl(v_{i,0}(t)^*v_{k,0}(t)\bigr)
 \right|\,d\nu_0(t)\qquad(i\ne k).
\]
By the definition of $H_0(t)$, we have
\begin{equation}\label{eq:lower-boundary-estimate}
 \begin{aligned}
 \int_{\mathbb R}\left\|H_0(t)\right\|_2^2\,d\nu_0(t)
 &=\sum_{i,k=1}^N\int_{\mathbb R}
   \operatorname{tr}\big(b_i(it)^*b_k(it)\big)
   \operatorname{Tr}\big(v_{i,0}(t)^*v_{k,0}(t)\big)\,d\nu_0(t)\\
 &\le 4+\sum_{\substack{1\le i,k\le N\\i\ne k}}\Phi_2(i,k)
 \overset{\eqref{eq:general-analytic-moment-selection}}{\le}5.
 \end{aligned}
\end{equation}

\medskip
\noindent\textbf{Step 5.} With
$A_r\coloneqq8\sqrt2\,r\kappa_r+2^{1/r}$, we prove that
\[
 \left(\int_{\mathbb R}\left\|H_1(t)\right\|_r^r\,d\nu_1(t)\right)^{1/r}
 \le A_r.
\]

The boundary line $\{1+it:t\in\mathbb R\}$ of
$\mathbb S$ corresponds to $j=1$, for which
$r_j=r_1=r\overset{\eqref{eq:theorem-interpolation-parameters}}{=}
2\lfloor p/2\rfloor+2>p$.
To estimate $H_1$, consider
$\mathcal Q\coloneqq L_\infty(\mathbb R,\nu_1)\overline\otimes
\mathbb{M}_m\overline\otimes\mathcal M$ and set
\[
 \begin{gathered}
 z_k(t)\coloneqq b_k(1+it)\otimes v_{k,1}(t)
 \quad(1\le k\le N),\\
 H_1(t)=\sum_{k=1}^N z_k(t)\qquad(t\in\mathbb R).
 \end{gathered}
\]
We use the identification
\cite[immediately before Theorem~6.2]{HJX}
\begin{equation}\label{eq:upper-bochner-norm}
 \begin{gathered}
 L_r(\mathcal Q)
 =L_r(\nu_1;L_r(\mathbb{M}_m\overline\otimes\mathcal M)),\\
 \left\|f\right\|_{L_r(\mathcal Q)}
 =\left(\int_{\mathbb R}\left\|f(t)\right\|_r^r\,d\nu_1(t)\right)^{1/r}
 \quad(f\in L_r(\mathcal Q)).
 \end{gathered}
\end{equation}
Fix $t\in\mathbb R$ and set
$B_k\coloneqq b_k(1+it)$ and
$V_k\coloneqq v_{k,1}(t)$. Then
\[
 \begin{aligned}
 z_k(t)^*z_k(t)&=B_k^*B_k\otimes V_k^*V_k,\\
 z_k(t)z_k(t)^*&=B_kB_k^*\otimes V_kV_k^*.
 \end{aligned}
\]
Taking $j=1$ in \eqref{eq:general-analytic-coefficients}
and \eqref{eq:theorem-boundary-limits}, we obtain
\[
 \rho_r(B_1,\ldots,B_N)\le1,\qquad
 \max_{1\le k\le N}\left\|V_k\right\|_r\le2.
\]
Since $B_k^*B_k,B_kB_k^*\in(\mathbb M_m)_+$ and
$V_k^*V_k,V_kV_k^*\in L_{r/2}(\mathcal M)_+$,
it follows from Lemma~\ref{lem:positive-tensor-high-p} with
$\mathcal N=\mathcal M$, $s=r/2$, and
\[
 (a_k,b_k)=(B_k^*B_k,V_k^*V_k)
 \quad\text{or}\quad
 (a_k,b_k)=(B_kB_k^*,V_kV_k^*)
\]
that
\begin{equation}\label{eq:upper-pointwise-column-square}
 \begin{aligned}
 \left\|\sum_k B_k^*B_k\otimes V_k^*V_k\right\|_{r/2}
 &\le\left\|\sum_kB_k^*B_k\right\|_{S_{r/2}^m}
       \max_k\left\|V_k^*V_k\right\|_{r/2}\\
 &=\left\|\left(\sum_kB_k^*B_k\right)^{1/2}\right\|_{S_r^m}^{2}
       \max_k\left\|V_k\right\|_r^2
 \overset{\eqref{eq:rho-definition}}{\le}
       4\rho_r(B_1,\ldots,B_N)^2
       \overset{\eqref{eq:general-analytic-coefficients}}{\le}4,
 \end{aligned}
\end{equation}
\begin{equation}\label{eq:upper-pointwise-row-square}
 \begin{aligned}
 \left\|\sum_k B_kB_k^*\otimes V_kV_k^*\right\|_{r/2}
 &\le\left\|\sum_kB_kB_k^*\right\|_{S_{r/2}^m}
       \max_k\left\|V_kV_k^*\right\|_{r/2}\\
 &=\left\|\left(\sum_kB_kB_k^*\right)^{1/2}\right\|_{S_r^m}^{2}
       \max_k\left\|V_k\right\|_r^2
 \overset{\eqref{eq:rho-definition}}{\le}
       4\rho_r(B_1,\ldots,B_N)^2
       \overset{\eqref{eq:general-analytic-coefficients}}{\le}4.
 \end{aligned}
\end{equation}
Since $\nu_1(\mathbb R)=1$, integrating the $r/2$ powers
in \eqref{eq:upper-pointwise-column-square} and
\eqref{eq:upper-pointwise-row-square} yields
\begin{equation}\label{eq:upper-integrated-squares}
 \begin{aligned}
 \left\|\sum_k z_k^*z_k\right\|_{L_{r/2}(\mathcal Q)}
 &=\left(\int_{\mathbb R}
  \left\|\sum_k B_k^*B_k\otimes V_k^*V_k\right\|_{r/2}^{r/2}
  \,d\nu_1(t)\right)^{2/r}\le4,\\
 \left\|\sum_k z_kz_k^*\right\|_{L_{r/2}(\mathcal Q)}
 &=\left(\int_{\mathbb R}
  \left\|\sum_k B_kB_k^*\otimes V_kV_k^*\right\|_{r/2}^{r/2}
  \,d\nu_1(t)\right)^{2/r}\le4.
 \end{aligned}
\end{equation}
By the triangle inequality in $L_{r/2}(\mathcal Q)$,
it follows that
\begin{equation}\label{eq:upper-square-sum}
 \left\|\left(\sum_k(z_k^*z_k+z_kz_k^*)\right)^{1/2}
       \right\|_{L_r(\mathcal Q)}^2\le4+4=8.
\end{equation}
On the other hand, since
$0\le B_i^*B_i\le\sum_{k=1}^N B_k^*B_k$ for $1\le i\le N$,
monotonicity of the Schatten $r/2$-norm on positive matrices gives
\[
 \begin{aligned}
 \left\|b_i(1+it)\right\|_{S_r^m}^{2}
 &=\left\|B_i^*B_i\right\|_{S_{r/2}^m}
 \le\left\|\sum_{k=1}^N B_k^*B_k\right\|_{S_{r/2}^m}\\
 &=\left\|\left(\sum_{k=1}^N B_k^*B_k\right)^{1/2}\right\|_{S_r^m}^{2}
 \overset{\eqref{eq:rho-definition}}{\le}\rho_r(B_1,\ldots,B_N)^2
 \overset{\eqref{eq:general-analytic-coefficients}}{\le}1.
 \end{aligned}
\]
Thus $\left\|b_i(1+it)\right\|_{S_r^m}\le1$ for every
$1\le i\le N$ and $t\in\mathbb R$.
For pairwise distinct $i_1,\ldots,i_r\in\{1,\ldots,N\}$,
H\"older's inequality gives
\[
 \left|\operatorname{tr}\big(
 b_{i_1}(1+it)^*b_{i_2}(1+it)\cdots
 b_{i_{r-1}}(1+it)^*b_{i_r}(1+it)\big)\right|
 \le\prod_{\ell=1}^r\left\|b_{i_\ell}(1+it)\right\|_{S_r^m}\le1.
\]
For pairwise distinct $i_1,\ldots,i_r$, the trace
of $z_{i_1}^*z_{i_2}\cdots z_{i_{r-1}}^*z_{i_r}$ is
\begin{equation}\label{eq:upper-trace-factorization}
 \begin{aligned}
 &\operatorname{Tr}_{\mathcal Q}
      (z_{i_1}^*z_{i_2}\cdots z_{i_{r-1}}^*z_{i_r})\\
 &\quad=\int_{\mathbb R}
   \operatorname{tr}\big(b_{i_1}(1+it)^*b_{i_2}(1+it)
                       \cdots b_{i_{r-1}}(1+it)^*b_{i_r}(1+it)\big)\\
 &\qquad\qquad\cdot
   \operatorname{Tr}\big(v_{i_1,1}(t)^*v_{i_2,1}(t)
                       \cdots v_{i_{r-1},1}(t)^*v_{i_r,1}(t)\big)\,d\nu_1(t).
 \end{aligned}
\end{equation}
Recalling the definition of $\Phi_r$
(see \eqref{eq:Phi-q-definition}), we have
\begin{equation}\label{eq:upper-distinct-sum}
 \Delta_r(z_1,\ldots,z_N)
 \overset{\eqref{eq:upper-trace-factorization}}{\le}
 \sum_{\mathbf i\in\mathcal I_r}\Phi_r(\mathbf i)
 \overset{\eqref{eq:general-analytic-moment-selection}}{\le}1.
\end{equation}
Since $r\in2\mathbb N$ and
$r\overset{\eqref{eq:theorem-interpolation-parameters}}{\ge}4$, we take
$\mathcal N=\mathcal Q$, $p=r$, and $u_k=z_k$
in Lemma~\ref{lem:even-haagerup-upper} to obtain
\begin{equation}\label{eq:upper-boundary-estimate}
 \begin{aligned}
 &\qquad\left(\int_{\mathbb R}\left\|H_1(t)\right\|_r^r\,d\nu_1(t)\right)^{1/r}
 \mathrel{\overset{\eqref{eq:sum-function-definitions},\,\eqref{eq:upper-bochner-norm}}{=}}
 \left\|\sum_k z_k\right\|_{L_r(\mathcal Q)}
 \\
 &\mathrel{\overset{\mathclap{\text{Lemma~\ref{lem:even-haagerup-upper}}}}{\le}}
 4r\kappa_r
 \left\|\left(\sum_{k=1}^N(z_k^*z_k+z_kz_k^*)\right)^{1/2}
 \right\|_{L_r(\mathcal Q)}
 +2^{1/r}\Delta_r(z_1,\ldots,z_N)^{1/r}\\
 &\mathrel{\overset{\mathclap{\eqref{eq:upper-square-sum}}}{\le}}
 4r\kappa_r\sqrt8+2^{1/r}\Delta_r(z_1,\ldots,z_N)^{1/r}\\
 &\mathrel{\overset{\mathclap{\eqref{eq:upper-distinct-sum}}}{\le}}
 4r\kappa_r\sqrt8+2^{1/r}=A_r.
 \end{aligned}
\end{equation}

\medskip
\noindent\textbf{Step 6.} With
$a\coloneqq(a_1,\ldots,a_N)$ and
$c_p\coloneqq5^{(1-\theta)/2}A_r^\theta\kappa_p$,
we apply Lemma~\ref{lem:weighted-boundary-interpolation} to $H$ to obtain
\[
 \left\|\sum_{k=1}^N a_k\otimes x_k\right\|_p
 \le c_p\rho_p(a).
\]

We have
\[
 H(\theta)
 \overset{\eqref{eq:sum-function-definitions}}{=}
 \sum_{k=1}^N b_k(\theta)\otimes K_k(\theta)
 \overset{\eqref{eq:coefficient-interpolation-value},\,\eqref{eq:theorem-K-boundary-values}}{=}
 D^{\theta\delta}L^{-1}\sum_{k=1}^N a_k\otimes x_k.
\]
For $1\le q<\infty$, the tensor product map
\begin{equation}\label{eq:elementary-tensor-norm}
 \begin{gathered}
 S_q^m\times L_q(\mathcal M)\longrightarrow
 L_q(\mathbb M_m\overline\otimes\mathcal M),
 \qquad (a,x)\longmapsto a\otimes x,\\
 \left\|a\otimes x\right\|_q=\left\|a\right\|_{S_q^m}\left\|x\right\|_q,
 \end{gathered}
\end{equation}
is bounded and bilinear. Each $b_k$ is bounded and continuous on $\mathbb S$,
and analytic on $\mathbb S^\circ$
(see \cite[proof of Proposition~7.4, p.~60]{TroppMatrix}).
It follows from Lemma~\ref{lem:haagerup-analytic-boundary-compactness}
and \cite[Proposition~1.2.3]{MennickenMoller} that
$H$ in \eqref{eq:sum-function-definitions} is bounded on $\mathbb S$
and analytic on $\mathbb S^\circ$.
Since $b_k$ is norm-continuous and $K_k$ is bounded and weakly continuous
on $\mathbb S$, $H$ is weakly continuous on $\mathbb S$.
\par
Moreover, Lemma~\ref{lem:haagerup-analytic-boundary-compactness}
gives bounded, strongly measurable $v_{k,j}$.
The continuity of the tensor product map and of $t\mapsto b_k(j+it)$
therefore implies that $H_j$ in \eqref{eq:sum-function-definitions}
is bounded and strongly measurable.
The density $D$ is faithful and satisfies
\[
 (\operatorname{tr}\otimes\operatorname{Tr})(D)
 \overset{\eqref{eq:sum-function-definitions}}{=}m<\infty.
\]
Weak continuity of $H$ gives, in
$L_2(\mathbb M_m\overline\otimes\mathcal M)$,
\[
 H(s+it)\xrightarrow[\substack{s\to j\\0<s<1}]{w}H(j+it)
 \overset{\eqref{eq:sum-boundary-values}}{=}D^{j\delta}H_j(t)
 \quad(j=0,1,\ t\in\mathbb R).
\]
We apply Lemma~\ref{lem:weighted-boundary-interpolation} with
$\mathcal N=\mathbb M_m\overline\otimes\mathcal M$,
$d=D$, and
\[
 y\coloneqq L^{-1}\sum_{k=1}^N a_k\otimes x_k,
 \qquad H(\theta)=D^{\theta\delta}y.
\]
It follows that
\[
 \begin{aligned}
 &\qquad
 \left\|\sum_{k=1}^N a_k\otimes x_k\right\|_p
 =L\left\|y\right\|_p\\
 &\overset{\mathclap{\eqref{eq:integrated-kosaki}}}{\le}
 L\left(\int_{\mathbb R}\left\|H_0(t)\right\|_2^2\,d\nu_0(t)\right)^{(1-\theta)/2}
 \left(\int_{\mathbb R}\left\|H_1(t)\right\|_r^r\,d\nu_1(t)\right)^{\theta/r}\\
 &\overset{\mathclap{\eqref{eq:lower-boundary-estimate},\,\eqref{eq:upper-boundary-estimate}}}{\le}
 5^{(1-\theta)/2}A_r^\theta L\\
 &\overset{\mathclap{\eqref{eq:coefficient-normalizing-factor},\,\eqref{eq:khintchine}}}{\le}
 c_p\rho_p(a).
 \end{aligned}
\]

Here $L\overset{\eqref{eq:khintchine}}{\le}\kappa_p\rho_p(a)$,
and $r,\theta$ depend only on $p$ by
\eqref{eq:theorem-interpolation-parameters}.
\end{proof}

\subsection{Completely complemented subspaces}\label{sec:models}

\leavevmode We denote
$\langle\xi,\eta\rangle_2\coloneqq\operatorname{Tr}(\xi^*\eta)$ for
$\xi,\eta\in L_2(\mathcal M)$.

The next lemma is a direct consequence of the factorization result
of Junge, Nielsen and Oikhberg \cite[Lemma~2.9]{JNO}.
We shall use it in Lemma~\ref{lem:spectral-analysis} to obtain
completely bounded maps into $C_p$ and $R_p$.

\begin{lemma}\label{lem:weighted-analysis}
Let $2<p<\infty$, put $r\coloneqq2p/(p-2)$, and let
$h\in L_r(\mathcal M)_+$. Suppose $(u_n)\subset L_2(\mathcal M)$
and $0\le B<\infty$ satisfy
\[
 \sum_{n=1}^\infty|\langle u_n,\xi\rangle_2|^2
 \le B^2\left\|\xi\right\|_2^2\qquad(\xi\in L_2(\mathcal M)).
\]
Then the maps
\[
 \begin{aligned}
 S_R:L_p(\mathcal M)&\longrightarrow C_p,
 &S_Rx&\coloneqq(\langle u_n,xh\rangle_2)_{n\ge1},\\
 S_L:L_p(\mathcal M)&\longrightarrow R_p,
 &S_Lx&\coloneqq(\langle u_n,hx\rangle_2)_{n\ge1}
 \end{aligned}
\]
satisfy $\max\{\left\|S_R\right\|_{\mathrm{cb}},\left\|S_L\right\|_{\mathrm{cb}}\}
\le B\left\|h\right\|_r$.
\end{lemma}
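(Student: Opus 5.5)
The plan is to verify the cb-estimate directly through the amplification criterion \eqref{eq:cb-amplification-criterion}, treating $S_R$ in detail; $S_L$ is the mirror image. Fix $m\in\mathbb N$ and $X\in L_p(\mathbb M_m\overline\otimes\mathcal M)$, which by the identification $S_p^m[L_p(\mathcal M)]=L_p(\mathbb M_m\overline\otimes\mathcal M)$ is a generic element of the amplified domain. Write $\varphi_n\coloneqq\langle u_n,\cdot\rangle_2$. Then
\[
 y_n\coloneqq(\operatorname{id}_{\mathbb M_m}\otimes\varphi_n)(Y)\in\mathbb M_m,
 \qquad Y\coloneqq X(\mathbf 1\otimes h),
\]
and by \eqref{eq:column-row-amplification} we must show
\[
 \Bigl\|\Bigl(\sum_n y_n^*y_n\Bigr)^{1/2}\Bigr\|_{S_p^m}
 \le B\left\|h\right\|_r\left\|X\right\|_p .
\]
Since $1/p+1/r=1/2$, H\"older's inequality gives $Y\in L_2(\mathbb M_m\overline\otimes\mathcal M)$, so all quantities below are finite.

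\textbf{Step 1 (matrix Bessel inequality).} Regard $Y$ as an $m\times m$ matrix $(Y_{ij})$ with entries in $L_2(\mathcal M)$. For $\xi\in\mathbb C^m$, the vector $y_n\xi$ is $\varphi_n$ applied coordinatewise to the column $Y\xi\in L_2(\mathcal M)^m$. Applying the Bessel hypothesis in each coordinate and summing gives
\[
 \sum_n y_n^*y_n\le B^2\,(\operatorname{id}\otimes\operatorname{Tr})(Y^*Y)
\]
as positive matrices. This is the same coordinatewise argument as \eqref{eq:coefficient-bessel}, with the frame bound $B^2$ in place of orthonormality. By monotonicity of the $S_{p/2}^m$-norm on positive matrices, it therefore suffices to show
\[
 \bigl\|(\operatorname{id}\otimes\operatorname{Tr})(Y^*Y)\bigr\|_{S_{p/2}^m}\le\left\|h\right\|_r^2\left\|X\right\|_p^2 .
\]

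\textbf{Step 2 (duality).} The dual exponent of $p/2$ is $p/(p-2)=r/2$. For positive $c\in S_{r/2}^m$ with $\|c\|_{S_{r/2}^m}\le1$, compute
\[
 \operatorname{tr}\bigl(c\,(\operatorname{id}\otimes\operatorname{Tr})(Y^*Y)\bigr)
 =(\operatorname{tr}\otimes\operatorname{Tr})\bigl((c\otimes\mathbf 1)Y^*Y\bigr)
 =\bigl\|X(c^{1/2}\otimes h)\bigr\|_2^2 .
\]
The last equality uses trace cyclicity and the fact that $c^{1/2}\otimes\mathbf 1$ commutes with $\mathbf 1\otimes h$. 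H\"older's inequality and \eqref{eq:elementary-tensor-norm} then bound the right-hand side by
\[
 \left\|X\right\|_p^2\,\|c^{1/2}\|_{S_r^m}^2\left\|h\right\|_r^2
 =\left\|X\right\|_p^2\left\|c\right\|_{S_{r/2}^m}\left\|h\right\|_r^2
 \le\left\|X\right\|_p^2\left\|h\right\|_r^2 .
\]
The norm of a positive matrix in $S_{p/2}^m$ is the supremum of such pairings over positive $c$ in the unit ball of the dual, so the estimate of Step~1 follows. Taking the supremum over $m$ in \eqref{eq:cb-amplification-criterion} yields $\|S_R\|_{\mathrm{cb}}\le B\|h\|_r$.

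\textbf{Step 3 (the row map).} For $S_L$, put $Y\coloneqq(\mathbf 1\otimes h)X$. The row square function $\sum_n y_ny_n^*$ is bounded by $B^2(\operatorname{id}\otimes\operatorname{Tr})(YY^*)$; this time the Bessel hypothesis is applied to the rows of $Y$, which is where the adjoint on the row side comes in. The same duality argument, with $(\mathbf 1\otimes h)X(\mathbf 1\otimes\cdots)$ replaced by $(c^{1/2}\otimes h)X$, gives the bound.

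\textbf{Main obstacle.} The step needing the most care is the Haagerup-space bookkeeping in Step~2 when $\mathcal M$ is not semifinite. One must justify that $c\otimes\mathbf 1$ (a bounded element) and $\mathbf 1\otimes h$ commute and may be moved cyclically under $\operatorname{tr}\otimes\operatorname{Tr}$, and that $(\operatorname{id}\otimes\operatorname{Tr})(Y^*Y)$ is well defined through its matrix entries $\operatorname{Tr}(Y_{ki}^*Y_{kj})$. Working entrywise with these $L_2$ inner products avoids any conditional-expectation machinery. Alternatively, this is exactly the content of \cite[Lemma~2.9]{JNO}: the map $x\mapsto xh$ is completely contractive up to $\|h\|_r$ from $L_p(\mathcal M)$ into the column Hilbert space $L_2(\mathcal M)^{c}_p$, and composing with the analysis operator $\ell_2$-bounded by $B$ (which is automatically cb between column spaces) gives the result.
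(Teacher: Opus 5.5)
Your argument is correct, but it takes a different route from the paper. The paper's proof is a two-line reduction. It packages the Bessel hypothesis as a bounded operator $W:L_2(\mathcal M)\to\ell_2$, $W\xi=(\langle u_n,\xi\rangle_2)_n$, with $\|W\|\le B$. It then quotes \cite[Lemma~2.9]{JNO} for the factorizations $S_Rx=W(xh)$ and $S_Lx=W(hx)$, which is essentially the alternative you mention at the end.

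You instead prove the estimate from scratch:
\begin{itemize}
\item The coordinatewise Bessel inequality $\sum_n y_n^*y_n\le B^2(\operatorname{id}\otimes\operatorname{Tr})(Y^*Y)$ absorbs both $W$ and the homogeneity of the column structure in one step.
\item $S_{p/2}^m$--$S_{r/2}^m$ duality, together with H\"older's inequality in $L_\cdot(\mathbb M_m\overline\otimes\mathcal M)$, bounds the partial trace by $\|h\|_r^2\|X\|_p^2$.
\end{itemize}
Both steps check out, including the row case. There, trace cyclicity $\operatorname{Tr}(Y_{ji}^*Y_{li})=\operatorname{Tr}(Y_{li}Y_{ji}^*)$ is what identifies the bounding matrix as $(\operatorname{id}\otimes\operatorname{Tr})(YY^*)$.

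What your route buys is self-containedness. It uses only H\"older's inequality, trace cyclicity with bounded or $L_2$ factors, and the entrywise description of $L_q(\mathbb M_m\overline\otimes\mathcal M)$. So it visibly works in the generality needed here (Haagerup $L_p$ of an arbitrary $\mathcal M$, with target structures $C_p$ and $R_p$), without relying on the precise formulation of the cited factorization lemma. The paper's route is shorter but treats that lemma as a black box.

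The only routine point you leave implicit is that the infinite column $(y_n)$ should be handled by truncation. This is harmless: by Step~1 the partial sums $\sum_{n\le N}y_n^*y_n$ increase to a limit in $\mathbb M_m$, and the tails tend to zero.
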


\begin{proof}
The operator
\[
 W:L_2(\mathcal M)\longrightarrow\ell_2,\qquad
 W\xi\coloneqq(\langle u_n,\xi\rangle_2)_{n\ge1},
\]
satisfies $\left\|W\right\|\le B$.
It follows from \cite[Lemma~2.9]{JNO}, applied to
$S_Rx=W(xh)$ and $S_Lx=W(hx)$, that
\begin{equation}\label{eq:weighted-analysis-cb}
 \max\{\left\|S_R\right\|_{\mathrm{cb}},\left\|S_L\right\|_{\mathrm{cb}}\}
 \le\left\|W\right\|\,\left\|h\right\|_r\le B\left\|h\right\|_r.
\end{equation}
\end{proof}

For $2<p<\infty$ and $0\le\lambda\le1$, set
\[
 E_\lambda\coloneqq
 \begin{cases}
 R_p,&\lambda=0,\\
 C_p\cap R_p,&0<\lambda<1,\\
 C_p,&\lambda=1.
 \end{cases}
\]

The orthonormal sequence and completely bounded map
constructed below will be used to obtain the projection $P$ in the proof
of Theorem~\ref{thm:general-high-p-relative-complement}.

\begin{lemma}\label{lem:spectral-analysis}
Let $2<p<\infty$, put $r\coloneqq2p/(p-2)$, and let
$F\subset L_p(\mathcal M)$ be an infinite-dimensional closed
subspace. Suppose that, for some $h\in L_r(\mathcal M)_+$,
\[
 \left\|x\right\|_H\coloneqq\big(\left\|xh\right\|_2^2+\left\|hx\right\|_2^2\big)^{1/2}
\]
is a Hilbert norm satisfying
$c\left\|x\right\|_p\le\left\|x\right\|_H\le C\left\|x\right\|_p$ for all $x\in F$, where
$0<c\le C<\infty$. Let $A$ be the positive
contraction on $H\coloneqq(F,\left\|\cdot\right\|_H)$ defined by
\[
 \langle Ax,y\rangle_H=\langle xh,yh\rangle_2.
\]
Let $K\subset H$ be an infinite-dimensional closed
reducing subspace for $A$,\footnote{A closed subspace $K\subset H$
reduces $A$ if $AK\subset K$ and
$AK^{\perp_H}\subset K^{\perp_H}$; equivalently,
$AP_K=P_KA$, where $P_K$ is the orthogonal projection onto $K$.} and let
$\lambda\in\sigma_{\mathrm{ess}}(A|_K)\subset[0,1]$.\footnote{For a bounded
self-adjoint operator $T$ on a Hilbert space $\mathcal H$,
$\sigma_{\mathrm{ess}}(T)$ denotes its essential spectrum, obtained from
$\sigma(T)$ by removing all isolated eigenvalues of finite multiplicity.
Equivalently,
\[
 \sigma_{\mathrm{ess}}(T)
 =\{\mu\in\mathbb R:
 \dim(\mathbf1_{(\mu-\varepsilon,\mu+\varepsilon)}(T)\mathcal H)=\infty
 \text{ for every }\varepsilon>0\};
\]
see \cite[Section~6.4, Eq.~(6.30)]{Teschl}.}
There is a sequence $(y_n)\subset K$ such that
\begin{equation}\label{eq:spectral-selection}
 \langle y_i,y_j\rangle_H=\delta_{ij}\quad(i,j\in\mathbb N),
 \qquad
 \sum_{n=1}^\infty\left\|(A-\lambda I_H)y_n\right\|_H<\infty.
\end{equation}
Moreover, the map
\[
 \Phi:F\longrightarrow E_\lambda,\qquad
 \Phi x\coloneqq(\langle y_n,x\rangle_H)_{n\ge1}
\]
is completely bounded with respect to the matrix norms $S_p^m[F]$,
$m\ge1$, defined in the Introduction.
\end{lemma}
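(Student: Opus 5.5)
The proof has two parts. First we choose the orthonormal sequence $(y_n)$ from spectral subspaces of $A|_K$ near $\lambda$. Then we write $\Phi$ as a combination of maps of the form in Lemma~\ref{lem:weighted-analysis}, and the case $0<\lambda<1$ (target $C_p\cap R_p$) is where the two square-function estimates must be combined.

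\emph{Construction of $(y_n)$.} Since $\lambda\in\sigma_{\mathrm{ess}}(A|_K)$, every spectral subspace $\mathbf 1_{(\lambda-\varepsilon,\lambda+\varepsilon)}(A|_K)K$ is infinite-dimensional. We take $\varepsilon_n\coloneqq2^{-n}$ and choose the $y_n$ inductively. Suppose $y_1,\ldots,y_{n-1}$ are chosen. The subspace $\mathbf 1_{(\lambda-\varepsilon_n,\lambda+\varepsilon_n)}(A|_K)K$ is infinite-dimensional, so it contains a unit vector $y_n$ orthogonal to $y_1,\ldots,y_{n-1}$. Since $K$ reduces $A$, these spectral projections of $A|_K$ are spectral projections of $A$ restricted to $K$, so $y_n\in K$. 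The spectral theorem gives $\|(A-\lambda)y_n\|_H\le\varepsilon_n$, and summing yields \eqref{eq:spectral-selection}.

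\emph{Reduction of $\Phi$.} By definition of $\|\cdot\|_H$,
\[
 \langle y_n,x\rangle_H=\langle y_nh,xh\rangle_2+\langle hy_n,hx\rangle_2 .
\]
Put $u_n\coloneqq y_nh$ and $w_n\coloneqq hy_n$. These vectors satisfy the Bessel bound required in Lemma~\ref{lem:weighted-analysis}. Indeed, for $\xi\in L_2$ the sequence $(\langle y_nh,\xi\rangle_2)_n$ is $T^*\xi$ paired against $(y_n)$, where $T:H\to L_2$, $Tx=xh$, has norm at most $1$; Bessel's inequality in $H$ then gives $\sum_n|\langle u_n,\xi\rangle_2|^2\le\|\xi\|_2^2$. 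The same argument applies to $(w_n)$. We cannot apply Lemma~\ref{lem:weighted-analysis} as stated, since it uses the same vectors and places $h$ on the other side. Instead we use it in the form coming from \cite[Lemma~2.9]{JNO}, with the map $W:L_2\to\ell_2$ applied to $xh$ or to $hx$. This shows that
\[
 \Phi_R x\coloneqq(\langle u_n,xh\rangle_2)_n\quad\text{and}\quad \Phi_L x\coloneqq(\langle w_n,hx\rangle_2)_n
\]
are completely bounded into $C_p$ and $R_p$, respectively. Their sum $\Phi=\Phi_R+\Phi_L$ is therefore completely bounded into $C_p$ and into $R_p$ separately, hence into $C_p\cap R_p$ by \eqref{eq:intersection-matrix-norm}. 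This already settles $0<\lambda<1$; there $E_\lambda=C_p\cap R_p$.

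\emph{The endpoint cases.} Here the approximate eigenvector property is used. For $\lambda=1$ we must show that $\Phi$ is completely bounded into $C_p$, so we must remove the row part $\Phi_L$. We have $\langle Ay_n,x\rangle_H=\langle y_nh,xh\rangle_2$, hence
\[
 \Phi_R x=(\langle Ay_n,x\rangle_H)_n=\Phi x+(\langle (A-I)y_n,x\rangle_H)_n .
\]
The error map $x\mapsto(\langle (A-I)y_n,x\rangle_H)_n$ is a sum of rank-one maps $x\mapsto\langle (A-I)y_n,x\rangle_H\,e_n$. The functional $\langle z,\cdot\rangle_H$ has norm at most $C^2\|z\|_H$ on $F$, so each rank-one map is completely bounded with $\mathrm{cb}$-norm at most $C^2\|(A-I)y_n\|_H$. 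The series of these norms converges by \eqref{eq:spectral-selection}, so the error map is completely bounded into $C_p$. Hence $\Phi=\Phi_R-(\text{error})$ is completely bounded into $C_p$. The case $\lambda=0$ is symmetric: $I-A$ corresponds to the form $\langle hx,hy\rangle_2$, so $\Phi_L=\Phi-(\langle Ay_n,x\rangle_H)_n$, and the subtracted map is again an absolutely summable series of rank-one maps. This gives complete boundedness of $\Phi$ into $R_p$.

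\emph{Expected main obstacle.} The delicate step is justifying the cb-estimate for $\Phi_R$ and $\Phi_L$ with $h$ on the correct side. This requires checking that \cite[Lemma~2.9]{JNO} gives complete boundedness of $x\mapsto W(xh)$ into $C_p$ for an arbitrary contraction $W:L_2\to\ell_2$, not only for the specific vectors in Lemma~\ref{lem:weighted-analysis}. A second point to check is the identification of $\mathbb M_m(F)$ norms with $S_p^m[F]$ via \eqref{eq:cb-amplification-criterion}.
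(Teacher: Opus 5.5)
Your construction of $(y_n)$ and your endpoint argument are correct, but the case $0<\lambda<1$ rests on a false inference. You have shown only that $\Phi_R$ is completely bounded into $C_p$ and that $\Phi_L$ is completely bounded into $R_p$. From this you cannot conclude that $\Phi=\Phi_R+\Phi_L$ is completely bounded into $C_p$ and into $R_p$ separately. That would require $\Phi_L$ to be completely bounded into $C_p$ and $\Phi_R$ into $R_p$, and Lemma~\ref{lem:weighted-analysis} gives neither. Such statements fail in general. Take $\mathcal M=B(\ell_2)$, $h=e_{11}$ and $W\xi=(\langle e_{k1},\xi\rangle_2)_k$. Then $x\mapsto W(xh)$ sends $e_{k1}\mapsto e_k$, so on $C_p$ it is the formal identity $C_p\to R_p$. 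This map is not completely bounded for $p>2$: for $a_k=e_{1k}\in\mathbb M_n$ the ratio of the $R_p$ norm to the $C_p$ norm is $n^{1/2-1/p}$. So the sentence ``This already settles $0<\lambda<1$'' is unjustified. The interior case is exactly where the spectral information about $\lambda$ must enter.

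The repair is the device from your endpoint paragraph, applied for every $\lambda$. Put $\mathcal E x\coloneqq(\langle(A-\lambda I_H)y_n,x\rangle_H)_n$. This is an absolutely summable series of rank-one maps, so it is completely bounded into $C_p$ and into $R_p$, with $\|\mathcal E\|_{\mathrm{cb}}\le C\sum_n\|(A-\lambda I_H)y_n\|_H$. (The functional $\langle z,\cdot\rangle_H$ has norm at most $C\|z\|_H$ on $(F,\|\cdot\|_p)$, not $C^2\|z\|_H$.) You have $\Phi_Rx=(\langle Ay_n,x\rangle_H)_n$ and $\Phi_Lx=(\langle(I_H-A)y_n,x\rangle_H)_n$. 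Hence $\lambda\Phi=\Phi_R-\mathcal E$ and $(1-\lambda)\Phi=\Phi_L+\mathcal E$. So $\Phi$ is completely bounded into $C_p$ if $\lambda>0$ and into $R_p$ if $\lambda<1$, and \eqref{eq:intersection-matrix-norm} then handles $0<\lambda<1$.

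Your worry about Lemma~\ref{lem:weighted-analysis} is unfounded. It allows any Bessel family, so it applies directly with $u_n=y_nh$ for $\Phi_R$ and with $u_n=hy_n$ for $\Phi_L$.

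With this fix your route differs from the paper's. The paper additionally picks $y_n$ in the range of $\mathbf 1_I(A)$, where $I=[\lambda/2,(1+\lambda)/2]$. It then uses $f(t)=t^{-1}\mathbf 1_I(t)$ to write $\Phi x=(\langle(f(A)y_n)h,xh\rangle_2)_n$ exactly, with Bessel constant $(2/\lambda)^{1/2}$. On the row side it uses $g(t)=(1-t)^{-1}\mathbf 1_I(t)$ in the same way. Thus the paper needs no perturbation and does not use the summability in \eqref{eq:spectral-selection} for complete boundedness. Your argument trades the functional calculus for that summability.
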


\begin{proof}
Put
\[
 I\coloneqq\left[\frac{\lambda}{2},\frac{1+\lambda}{2}\right],
 \qquad Q\coloneqq\mathbf1_I(A).
\]
Choose $0<\varepsilon_n\le2^{-n}$ such that
$[\lambda-\varepsilon_n,\lambda+\varepsilon_n]\cap[0,1]\subset I$.
Since $\lambda\in\sigma_{\mathrm{ess}}(A|_K)$, each spectral projection
\[
 Q_n\coloneqq\mathbf1_{[\lambda-\varepsilon_n,\lambda+\varepsilon_n]}(A|_K)
\]
has infinite-dimensional range.
We can therefore choose unit vectors successively so that
\[
 y_n\in Q_nK\cap\operatorname{span}\{y_1,\ldots,y_{n-1}\}^{\perp_H},
 \qquad Qy_n=y_n,
 \qquad\left\|(A-\lambda I_H)y_n\right\|_H\le\varepsilon_n.
\]
Hence \eqref{eq:spectral-selection} holds. Define
\[
 J_R:H\longrightarrow L_2(\mathcal M),\quad J_Rx\coloneqq xh,
 \qquad
 J_L:H\longrightarrow L_2(\mathcal M),\quad J_Lx\coloneqq hx.
\]
Since
$\left\|J_Rx\right\|_2^2+\left\|J_Lx\right\|_2^2
=\left\|x\right\|_H^2$ for every $x\in H$, the maps $J_R$ and $J_L$
are bounded. Recalling that
$\left\|u\right\|_H^2=\left\|uh\right\|_2^2+\left\|hu\right\|_2^2$
for $u\in H$, the complex polarization identity yields, for $x,y\in H$,
\[
 \begin{aligned}
 \langle x,y\rangle_H
 &=\frac14\sum_{k=0}^3(-\mathrm i)^k
   \left\|x+\mathrm i^k y\right\|_H^2\\
 &=\frac14\sum_{k=0}^3(-\mathrm i)^k
   \left(\left\|xh+\mathrm i^k yh\right\|_2^2
        +\left\|hx+\mathrm i^k hy\right\|_2^2\right)\\
 &=\langle xh,yh\rangle_2+\langle hx,hy\rangle_2.
 \end{aligned}
\]
For $J\in\{J_R,J_L\}$, the adjoint $J^*:L_2(\mathcal M)\to H$
is defined by\footnote{For fixed $\xi\in L_2(\mathcal M)$,
the linear functional
\[
 y\longmapsto\langle\xi,Jy\rangle_2\qquad(y\in H)
\]
is bounded because
\[
 |\langle\xi,Jy\rangle_2|
 \le\left\|\xi\right\|_2\left\|Jy\right\|_2
 \le\left\|\xi\right\|_2\left\|y\right\|_H.
\]
By the Riesz representation theorem, there is a unique $z\in H$ such that
\[
 \langle z,y\rangle_H=\langle\xi,Jy\rangle_2\qquad(y\in H).
\]
Defining $J^*\xi\coloneqq z$, we obtain
\[
 \langle J^*\xi,y\rangle_H=\langle\xi,Jy\rangle_2.
\]
}
\[
 \langle J^*\xi,y\rangle_H=\langle\xi,Jy\rangle_2
 \qquad(\xi\in L_2(\mathcal M),\ y\in H).
\]
Taking $(J,\xi)=(J_R,xh)$ and $(J,\xi)=(J_L,hx)$, and using the definition
$\langle Ax,y\rangle_H=\langle xh,yh\rangle_2$ of $A$, we obtain,
for all $x,y\in H$,
\begin{align}
 \langle J_R^*J_Rx,y\rangle_H
 &=\langle J_Rx,J_Ry\rangle_2
 =\langle xh,yh\rangle_2
 =\langle Ax,y\rangle_H,\label{eq:spectral-right-form}\\
 \langle J_L^*J_Lx,y\rangle_H
 &=\langle J_Lx,J_Ly\rangle_2
 =\langle hx,hy\rangle_2\notag\\
 &=\langle x,y\rangle_H-\langle Ax,y\rangle_H
 =\langle (I_H-A)x,y\rangle_H.\label{eq:spectral-left-form}
\end{align}
It follows that
\begin{equation}\label{eq:spectral-multiplication}
 J_R^*J_R=A,\qquad J_L^*J_L=I_H-A.
\end{equation}

If $\lambda>0$, define the bounded Borel function $f:[0,1]\to[0,\infty)$ by
\[
 f(t)\coloneqq
 \begin{cases}t^{-1},&t\in I,\\0,&t\notin I,\end{cases}
 \qquad u_n\coloneqq (f(A)y_n)h.
\]
The definition of $f$ gives
\[
 t f(t)=
 \begin{cases}
 1,&t\in I,\\
 0,&t\in[0,1]\setminus I
 \end{cases}
 =\mathbf1_I(t)\qquad(0\le t\le1).
\]
By Borel functional calculus
\cite[Theorem~3.1]{Teschl},
\[
 Af(A)=(t\mapsto t f(t))(A)=\mathbf1_I(A)=Q.
\]
Since $Qy_n=y_n$, the definitions of $u_n$ and $A$ give, for every $x\in F$,
\begin{equation}\label{eq:spectral-coefficient-identity}
 \begin{aligned}
 \langle u_n,xh\rangle_2
 &=\langle(f(A)y_n)h,xh\rangle_2
 =\langle Af(A)y_n,x\rangle_H\\
 &=\langle Qy_n,x\rangle_H
 =\langle y_n,x\rangle_H.
 \end{aligned}
\end{equation}
Since $f$ is real-valued, $f(A)^*=f(A)$. For $\xi\in L_2(\mathcal M)$, the definition
of $u_n$ and the adjoint identity give
\[
 \begin{aligned}
 \langle u_n,\xi\rangle_2
 &=\langle J_Rf(A)y_n,\xi\rangle_2
 =\langle f(A)y_n,J_R^*\xi\rangle_H\\
 &=\langle y_n,f(A)J_R^*\xi\rangle_H.
 \end{aligned}
\]
Using $\left\|T\right\|^2=\left\|TT^*\right\|$
for $T=f(A)J_R^*:L_2(\mathcal M)\to H$
(see \cite[Section~12.9, Eqs.~(2) and~(7)]{RudinFA}), we obtain
\[
 \begin{aligned}
 \left\|f(A)J_R^*\right\|_{L_2(\mathcal M)\to H}^2
 &=\left\|f(A)J_R^*J_Rf(A)\right\|_{B(H)}
 \overset{\eqref{eq:spectral-multiplication}}{=}
 \left\|f(A)Af(A)\right\|_{B(H)}\\
 &\le\sup_{t\in I}\frac1t=\frac2\lambda,
 \end{aligned}
\]
since $t f(t)^2=f(t)$ for $0\le t\le1$ and
$I=[\lambda/2,(1+\lambda)/2]$.
It follows from Bessel's inequality \cite[Theorem~1.2]{Teschl} that
\begin{equation}\label{eq:spectral-bessel}
 \begin{aligned}
 \sum_n|\langle u_n,\xi\rangle_2|^2
 &=\sum_n|\langle y_n,f(A)J_R^*\xi\rangle_H|^2\\
 &\le\left\|f(A)J_R^*\xi\right\|_H^2
 \le\left\|f(A)J_R^*\right\|_{L_2(\mathcal M)\to H}^2
 \,\left\|\xi\right\|_2^2
 \le\frac{2}{\lambda}\left\|\xi\right\|_2^2.
 \end{aligned}
\end{equation}
It follows from Lemma~\ref{lem:weighted-analysis} and
\eqref{eq:spectral-coefficient-identity} that
\begin{equation}\label{eq:spectral-column-cb}
 \left\|\Phi:F\to C_p\right\|_{\mathrm{cb}}
 \le\sqrt{\frac{2}{\lambda}}\,\left\|h\right\|_r.
\end{equation}

If $\lambda<1$, define $g:[0,1]\to[0,\infty)$ and $(v_n)$ by
\[
 g(t)\coloneqq
 \begin{cases}(1-t)^{-1},&t\in I,\\0,&t\notin I,\end{cases}
 \qquad v_n\coloneqq h(g(A)y_n).
\]
Since $v_n=J_Lg(A)y_n$ and $hx=J_Lx$, the adjoint identity gives
\[
 \begin{aligned}
 \langle v_n,hx\rangle_2
 &=\langle J_Lg(A)y_n,J_Lx\rangle_2
 =\langle J_L^*J_Lg(A)y_n,x\rangle_H\\
 &\overset{\eqref{eq:spectral-multiplication}}{=}
 \langle(I_H-A)g(A)y_n,x\rangle_H
 =\langle Qy_n,x\rangle_H
 =\langle y_n,x\rangle_H
 \qquad(x\in F),
 \end{aligned}
\]
where $(I_H-A)g(A)=\mathbf1_I(A)=Q$ follows from
$(1-t)g(t)=\mathbf1_I(t)$ and Borel functional calculus
\cite[Theorem~3.1]{Teschl}.
For $\xi\in L_2(\mathcal M)$, the identity $g(A)^*=g(A)$ gives
\[
 \langle v_n,\xi\rangle_2
 =\langle J_Lg(A)y_n,\xi\rangle_2
 =\langle g(A)y_n,J_L^*\xi\rangle_H
 =\langle y_n,g(A)J_L^*\xi\rangle_H.
\]
Moreover, for $\xi\in L_2(\mathcal M)$, Bessel's inequality gives
\[
 \sum_n|\langle v_n,\xi\rangle_2|^2
 =\sum_n|\langle y_n,g(A)J_L^*\xi\rangle_H|^2
 \le\left\|g(A)J_L^*\xi\right\|_H^2
 \le\frac{2}{1-\lambda}\left\|\xi\right\|_2^2,
\]
where the last inequality follows from
\[
 \begin{aligned}
 \left\|g(A)J_L^*\right\|_{L_2(\mathcal M)\to H}^2
 &=\left\|g(A)J_L^*J_Lg(A)\right\|_{B(H)}
 \overset{\eqref{eq:spectral-multiplication}}{=}
 \left\|g(A)(I_H-A)g(A)\right\|_{B(H)}\\
 &\le\sup_{t\in I}\frac{1}{1-t}
 =\frac{1}{1-(1+\lambda)/2}
 =\frac{2}{1-\lambda}.
 \end{aligned}
\]

Lemma~\ref{lem:weighted-analysis} therefore gives
\begin{equation}\label{eq:spectral-row-cb}
 \left\|\Phi:F\to R_p\right\|_{\mathrm{cb}}
 \le\sqrt{\frac{2}{1-\lambda}}\,\left\|h\right\|_r.
\end{equation}
For $0<\lambda<1$, it follows from \eqref{eq:intersection-amplification},
\eqref{eq:spectral-column-cb}, and \eqref{eq:spectral-row-cb} that
\[
 \left\|\Phi:F\to C_p\cap R_p\right\|_{\mathrm{cb}}
 \le2\max\{\left\|\Phi:F\to C_p\right\|_{\mathrm{cb}},
           \left\|\Phi:F\to R_p\right\|_{\mathrm{cb}}\}<\infty.
\]
For $\lambda=0$ or $\lambda=1$, use \eqref{eq:spectral-row-cb}
or \eqref{eq:spectral-column-cb}, respectively.
\end{proof}

Using the result of Raynaud and Xu
\cite[Theorem~5.1]{RX}, the next lemma constructs a weight $h$ and
an infinite-dimensional separable closed subspace $H_\infty$ of
an infinite-dimensional Hilbert space $H$, such that $H_\infty$
reduces $A$ and $H_\infty\subset eL_p(\mathcal M)e$,
where $e=s(h)$.
These properties allow us to apply Lemma~\ref{lem:spectral-analysis}.

\begin{lemma}
\label{lem:weighted-reducing-hull}
\emergencystretch=1em
Let $2<p<\infty$ and let $F\subset L_p(\mathcal M)$ be a closed,
infinite-dimensional Hilbertian subspace. There exist $h\in L_r(\mathcal M)_+$,
$r\coloneqq2p/(p-2)$, and an infinite-dimensional separable closed subspace
$H_\infty\subset F$ and constants $0<c\le C<\infty$ such that
\[
 c\left\|x\right\|_p\le\left\|x\right\|_H
 :=\big(\left\|xh\right\|_2^2+\left\|hx\right\|_2^2\big)^{1/2}
 \le C\left\|x\right\|_p\qquad(x\in F).
\]
For $H\coloneqq(F,\left\|\cdot\right\|_H)$ and $A$ defined by
$\langle Ax,y\rangle_H=\langle xh,yh\rangle_2$,
the subspace $H_\infty$ satisfies
\[
 A(H_\infty)\subset H_\infty,\qquad
 A(H_\infty^{\perp_H})\subset H_\infty^{\perp_H}.
\]
Moreover, $e\coloneqq s(h)$ is {$\sigma$-finite} and
$H_\infty\subset eL_p(\mathcal M)e$.
\end{lemma}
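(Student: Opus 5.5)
The plan is to obtain $h$ from Raynaud--Xu and then build $H_\infty$ as a separable reducing hull, as follows.

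\textbf{Obtaining $h$.} Since $F$ is Hilbertian, it contains no copy of $\ell_p$. The equivalence (iii)$\Leftrightarrow$(v) of \cite[Theorem~5.1]{RX}, recalled in Section~\ref{sec:preliminaries}, then gives $h_0\in L_r(\mathcal M)$ and $c>0$ with $c\|x\|_p\le(\|xh_0\|_2^2+\|h_0x\|_2^2)^{1/2}$ on $F$. The upper bound is automatic from H\"older: $\|xh\|_2\le\|x\|_p\|h\|_r$, so we may take $C=\sqrt2\|h\|_r$. I will replace $h_0$ by a positive element, e.g.\ $h=(|h_0|^2+|h_0^*|^2)^{1/2}$ or $|h_0|+|h_0^*|$, and check that the lower bound survives up to constants. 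The points to verify are $\|xh_0\|_2^2=\operatorname{Tr}(x|h_0^*|^2x^*)\le\|xh\|_2^2$ and $\|h_0x\|_2^2=\operatorname{Tr}(x^*|h_0|^2x)\le\|hx\|_2^2$, which hold once $|h_0|^2,|h_0^*|^2\le h^2$. Then $\|\cdot\|_H$ is a Hilbert norm equivalent to $\|\cdot\|_p$ on $F$, so $H=(F,\|\cdot\|_H)$ is a Hilbert space. The operator $A$ is a well-defined positive contraction by Riesz, since $\|xh\|_2^2\le\|x\|_H^2$.

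\textbf{$\sigma$-finiteness.} The support $e=s(h)$ need not be $\sigma$-finite. To fix this, first pick a sequence $(x_n)$ spanning an infinite-dimensional separable subspace of $F$. Let $f$ be the $\sigma$-finite join of their supports from \eqref{eq:sigma-finite-support-reduction}, enlarged by the support of a cutoff of $h$. The natural move is to replace $h$ by $h'=\mathbf 1_{(1/k,\infty)}$-type truncations, or by $fhf$ compressed. However, compressing $h$ may destroy the lower bound on all of $F$, and the statement requires the bound on all of $F$.

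Instead I will exploit that $h\in L_r$ with $r<\infty$: the support of any element of $L_r$ is already $\sigma$-finite, by the support reduction \cite[Remark~3.2]{HJX}. So $e=s(h)$ is $\sigma$-finite automatically, and this concern disappears.

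\textbf{Reducing hull.} Let $X_0$ be a separable infinite-dimensional closed subspace of $F$, and let $H_\infty$ be the closed span of $\{A^kx:x\in X_0,\ k\ge0\}$. This space is separable, and since $A$ is self-adjoint on $H$ and $A H_\infty\subset H_\infty$, it is reducing. It remains to arrange $H_\infty\subset eL_p(\mathcal M)e$. For this I need $X_0\subset eL_pe$ and $A$ to preserve $F\cap eL_pe$.

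Suppose $x\in F$ satisfies $x=exe$ fails, say $(1-e)x\neq0$. Then $h(1-e)x=0$, while $\|x\|_H$ only sees $exe$-parts partially. Indeed, $\|x\|_H^2=\|xh\|_2^2+\|hx\|_2^2=\|xeh\|_2^2+\|hex\|_2^2$ depends only on $ex$ and $xe$. The lower bound $c\|x\|_p\le\|x\|_H$ then forces $(1-e)x(1-e)=0$: otherwise that piece would have $H$-norm $0$ and positive $p$-norm, provided it lies in $F$, which it need not.

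This part is the main obstacle. The plan is to show that $F_e\coloneqq F\cap eL_pe$ is infinite-dimensional, and then define $A$-invariance within it. For invariance, note that $Ax$ is characterized by $\langle Ax,y\rangle_H=\langle xh,yh\rangle_2$; I expect to argue that the $H$-orthogonal projection structure commutes with the compressions $x\mapsto exe$, $x\mapsto ex(1-e)$, and so on. My first attempt is to show $F\subset eL_pe+(\text{corner pieces})$, using the fact that the maps $x\mapsto (1-e)x$ and $x\mapsto x(1-e)$ must be small on $F$. If this fails, the fallback is to choose $h$ via Raynaud--Xu applied on a $\sigma$-finite corner containing a separable infinite-dimensional subspace of $F$, and to add $\varepsilon\,$(faithful density)$^{1/r}$ to $h$ so that $e$ equals the corner unit. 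Then $H_\infty\subset eL_pe$ holds trivially because the initial $X_0$ lies there. In that variant the lower bound is kept on all of $F$ by using $h_0+h_{\rm corner}$, whose $H$-norm dominates the original.
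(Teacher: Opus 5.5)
Your handling of the weight is fine: the positivization $(|h_0|^2+|h_0^*|^2)^{1/2}$, the H\"older upper bound, and the $\sigma$-finiteness of $s(h)$ for $h\in L_r$ all work. So is the observation that the $A$-cyclic hull of a separable subspace is a separable reducing subspace. The gap is the inclusion $H_\infty\subset eL_p(\mathcal M)e$ with $e=s(h)$, which is the real content of the lemma, and neither of your routes gives it.

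In the fallback you say the inclusion holds ``trivially because $X_0$ lies there''. But $A$ is the operator on all of $H=(F,\|\cdot\|_H)$, $F$ is not contained in $eL_pe$, and nothing forces $AX_0\subset eL_pe$. A concrete counterexample in $S_p$:
\begin{itemize}
\item Let $(\epsilon_i)$ be an orthonormal basis of $\ell_2$ and write $x_\xi\coloneqq\sum_i\xi_ie_{i1}$ for $\xi\in\ell_2$.
\item Let $F=\{x_\xi:\xi\in\mathcal W\}$ with $\mathcal W=\overline{\operatorname{span}}\{\epsilon_{2j}:j\ge1\}\oplus\mathbb C(\epsilon_1+\epsilon_3)$.
\item Let $X_0$ correspond to $\overline{\operatorname{span}}\{\epsilon_{2j}\}$. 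Its support is the projection $f$ onto $\overline{\operatorname{span}}\{\epsilon_1,\epsilon_{2j}\}$.
\item Take $h^2=e_{11}+\varepsilon k^2$ with $k=d^{1/r}$ and $s(k)=f$. Then $e=s(h)=f$ and $F\cap eS_pe=X_0$.
\end{itemize}
One computes $\langle x_\xi,x_\eta\rangle_H=\langle G\xi,\eta\rangle$ with $G=\|h\epsilon_1\|^2I+P_{\mathcal W}h^2|_{\mathcal W}$, and $Ax_\xi=x_{\|h\epsilon_1\|^2G^{-1}\xi}$. Suppose $\langle k^2\epsilon_2,\epsilon_1\rangle\ne0$, which a suitable faithful density $d$ on $fB(\ell_2)f$ certainly allows. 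Then $G\epsilon_2$ has a nonzero component along $\epsilon_1+\epsilon_3$, so $G$ does not leave $X_0$ invariant. Since $G$ is self-adjoint and invertible, neither does $G^{-1}$. Hence $AX_0\not\subset F\cap eS_pe$, i.e.\ $AX_0\not\subset eS_pe$.

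Your first route (show $F\cap eL_pe$ is infinite-dimensional and $A$-invariant) fails for the same reason. Moreover, a weight satisfying the lower bound may have $F\cap s(h)L_ps(h)=\{0\}$: take $h=e_{11}$ and $F=\{x_\xi:\xi\perp\epsilon_1\}$. A minor point: $h_0+h_{\mathrm{corner}}$, like $|h_0|+|h_0^*|$, need not give the domination you claim, since $t\mapsto t^2$ is not operator monotone. Use $(h_0^2+\varepsilon k^2)^{1/2}$ instead.

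The missing idea is how to break a circularity: enlarging the weight to cover the supports of $AX_0$ changes $A$, and hence the hull. The paper fixes the reference norm $\|x\|_{H_0}^2=\|xh_0\|_2^2+\|h_0x\|_2^2$ once and for all. It then works with the $H_0$-self-adjoint form operators given by $\langle R_kx,y\rangle_{H_0}=\langle xk,yk\rangle_2$ and $\langle L_kx,y\rangle_{H_0}=\langle kx,ky\rangle_2$, which do not depend on the final weight.

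Starting from a separable $V_1\subset F$, the construction alternates two steps. First, take the $\sigma$-finite join $e_n$ of the supports of a dense subset of $V_n$, and set $k_n=d_n^{1/r}$ with $d_n$ a faithful density of $e_n\mathcal Me_n$. Second, let $V_{n+1}$ be the closed span of all words in $R_{h_0},L_{h_0},R_{k_j},L_{k_j}$ ($j\le n$) applied to $V_n$.

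Set $h^2\coloneqq h_0^2+\sum_n2^{-n}k_n^2$ and $H_\infty\coloneqq\overline{\bigcup_nV_n}$. Then $s(h)\ge e_n$ for every $n$, so $H_\infty\subset eL_pe$. The final operator is $A=B^{-1}R$, where $R=R_{h_0}+\sum_n2^{-n}R_{k_n}$ and $B=R+L$ are norm-convergent sums of these block operators. Therefore the $H_0$-orthogonal projection onto $H_\infty$ commutes with $R$ and $B$, is self-adjoint for $\langle\cdot,\cdot\rangle_H=\langle B\cdot,\cdot\rangle_{H_0}$, and commutes with $A$.
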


\begin{proof}
Since $F$ is Hilbertian and $p>2$, $F$ contains no subspace
isomorphic to $\ell_p$. By
\cite[Theorem~5.1, $(iii)\Leftrightarrow(v)$]{RX}, with $q=2$,
there exist $g\in L_r(\mathcal M)$ and $c_0>0$ such that
\[
 \left\|xg\right\|_2^2+\left\|gx\right\|_2^2
 \ge c_0^2\left\|x\right\|_p^2\qquad(x\in F).
\]
Set $h_0=(g^*g+gg^*)^{1/2}\in L_r(\mathcal M)_+$. By the tracial property of $\operatorname{Tr}$,
\begin{equation}\label{eq:initial-weighted-lower-bound}
 \left\|xh_0\right\|_2^2+\left\|h_0x\right\|_2^2
 =\left\|xg\right\|_2^2+\left\|xg^*\right\|_2^2
   +\left\|gx\right\|_2^2+\left\|g^*x\right\|_2^2
 \ge c_0^2\left\|x\right\|_p^2\qquad(x\in F).
\end{equation}
Equip $F$ with the equivalent Hilbert norm
\begin{equation}\label{eq:initial-weighted-norm}
 \left\|x\right\|_{H_0}^2\coloneqq\left\|xh_0\right\|_2^2+\left\|h_0x\right\|_2^2.
\end{equation}
For $k\in L_r(\mathcal M)_+$, define bounded positive
operators $R_k,L_k$ on $H_0\coloneqq(F,\left\|\cdot\right\|_{H_0})$ by
\begin{equation}\label{eq:weighted-form-operators}
 \langle R_kx,y\rangle_{H_0}=\langle xk,yk\rangle_2,
 \qquad
 \langle L_kx,y\rangle_{H_0}=\langle kx,ky\rangle_2.
\end{equation}
In particular, $R_{h_0}+L_{h_0}=I$. Indeed, for all $x,y\in F$, we have
\[
 \langle(R_{h_0}+L_{h_0})x,y\rangle_{H_0}
 =\langle xh_0,yh_0\rangle_2+\langle h_0x,h_0y\rangle_2
 \overset{\eqref{eq:initial-weighted-norm}}{=}\langle x,y\rangle_{H_0}.
\]

Start with an infinite-dimensional separable closed subspace
$V_1\subset F$. Having constructed $V_n$, choose
a countable dense subset of $V_n$, and let $e_n$ be the join
(supremum) of the left and right support projections of its elements. By
\eqref{eq:sigma-finite-support-reduction}, $e_n$ is $\sigma$-finite
and $V_n\subset L_p(e_n\mathcal M e_n)$. Choose a faithful
$d_n\in L_1(e_n\mathcal M e_n)_+$ with $\operatorname{Tr}d_n=1$ and set
$k_n\coloneqq d_n^{1/r}$. Then $s(k_n)=e_n$ and $\left\|k_n\right\|_r=1$.
Set
$\mathcal T_n\coloneqq
\{R_{h_0},L_{h_0},R_{k_j},L_{k_j}:1\le j\le n\}$ and define
\begin{equation}\label{eq:reducing-subspace-construction}
 \begin{aligned}
 V_{n+1}&\coloneqq
 \overline{\operatorname{span}}
 \{T_1\cdots T_\ell x:\ell\ge0,\ T_i\in\mathcal T_n,\ x\in V_n\},\\
 H_\infty&\coloneqq\overline{\bigcup_{n\ge1}V_n}.
 \end{aligned}
\end{equation}
The closures are taken in $H_0$, and the product $T_1\cdots T_\ell$
is interpreted as the identity when $\ell=0$. There are countably many finite products
of operators in $\mathcal T_n$, so $V_{n+1}$ is separable.
For each fixed $j$, all $V_n$ with $n>j$ are invariant under
$R_{k_j},L_{k_j}$; all $V_n$ with $n\ge2$ are invariant under
$R_{h_0},L_{h_0}$. 
By continuity and self-adjointness on $H_0$, for every
$T\in\bigcup_{n\ge1}\mathcal T_n$,
\begin{equation}\label{eq:initial-reducing-relations}
 T(H_\infty)\subset H_\infty,\qquad
 T(H_\infty^{\perp_{H_0}})\subset H_\infty^{\perp_{H_0}}.
\end{equation}

Since $r>2$, the positive series
\begin{equation}\label{eq:combined-weight}
 h^2\coloneqq h_0^2+\sum_{n\ge1}2^{-n}k_n^2
\end{equation}
converges in $L_{r/2}$ and defines $h\in L_r(\mathcal M)_+$.
Its support satisfies
\[
 e=s(h)=s(h_0)\vee\bigvee_{n\ge1}e_n,\qquad
 V_n\subset eL_p(\mathcal M)e\quad(n\ge1).
\]
The support $s(h_0)$ and every $e_n$ are $\sigma$-finite,
so $e$ is $\sigma$-finite. By \eqref{eq:reducing-subspace-construction},
$H_\infty\subset eL_p(\mathcal M)e$.
For $x\in F$, by \eqref{eq:combined-weight},
\begin{equation}\label{eq:combined-weight-identities}
 \begin{aligned}
 \left\|xh\right\|_2^2&=\left\|xh_0\right\|_2^2
   +\sum_{n\ge1}2^{-n}\left\|xk_n\right\|_2^2,\\
 \left\|hx\right\|_2^2&=\left\|h_0x\right\|_2^2
   +\sum_{n\ge1}2^{-n}\left\|k_nx\right\|_2^2.
 \end{aligned}
\end{equation}
Define operators on $H_0$ by
\begin{equation}\label{eq:weighted-sum-operators}
 R\coloneqq R_{h_0}+\sum_n2^{-n}R_{k_n},\qquad
 L\coloneqq L_{h_0}+\sum_n2^{-n}L_{k_n},
\end{equation}
where the operator-norm convergence follows from H\"older's inequality:
\[
\begin{aligned}
 \langle R_{k_n}x,x\rangle_{H_0}
 &=\left\|xk_n\right\|_2^2
 \overset{\frac12=\frac1p+\frac1r}{\le}
 \left\|k_n\right\|_r^2\left\|x\right\|_p^2
 \overset{\eqref{eq:initial-weighted-lower-bound},\,\eqref{eq:initial-weighted-norm}}{\le}
 c_0^{-2}\left\|k_n\right\|_r^2\left\|x\right\|_{H_0}^2,\\
 \langle L_{k_n}x,x\rangle_{H_0}
 &=\left\|k_nx\right\|_2^2
 \overset{\frac12=\frac1p+\frac1r}{\le}
 \left\|k_n\right\|_r^2\left\|x\right\|_p^2
 \overset{\eqref{eq:initial-weighted-lower-bound},\,\eqref{eq:initial-weighted-norm}}{\le}
 c_0^{-2}\left\|k_n\right\|_r^2\left\|x\right\|_{H_0}^2
 \qquad(x\in F).
\end{aligned}
\]
Since $R_{k_n}$ and $L_{k_n}$ are positive, taking the supremum over
$\left\|x\right\|_{H_0}=1$ gives
\[
 \left\|R_{k_n}\right\|,\ \left\|L_{k_n}\right\|
 \le c_0^{-2}\left\|k_n\right\|_r^2=c_0^{-2}.
\]
Since $H_\infty$ is closed in the Hilbert space $H_0$, we have
$H_0=H_\infty\oplus H_\infty^{\perp_{H_0}}$.
Define a bounded projection $P:H_0\to H_0$ by $P(u+v)=u$ for
$u\in H_\infty$ and $v\in H_\infty^{\perp_{H_0}}$.
Thus
\[
 \begin{aligned}
 &Px\in H_\infty,\qquad x-Px\in H_\infty^{\perp_{H_0}}
 \qquad(x\in F),\\
 &\langle Px,y\rangle_{H_0}=\langle x,Py\rangle_{H_0}
 \qquad(x,y\in F).
 \end{aligned}
\]
By the definition of $\mathcal T_n$,
\[
 \bigcup_{n\ge1}\mathcal T_n
 =\{R_{h_0},L_{h_0},R_{k_n},L_{k_n}:n\ge1\}.
\]
Thus \eqref{eq:initial-reducing-relations} applies to each of these operators
and to their scalar multiples. For every summand $T$ in
\eqref{eq:weighted-sum-operators} and every $x\in F$,
\[
 TPx\in H_\infty,\qquad
 T(I-P)x\in H_\infty^{\perp_{H_0}}.
\]
Since $P$ is the identity on $H_\infty$ and vanishes on
$H_\infty^{\perp_{H_0}}$, we obtain
\[
 PTx=PTPx+PT(I-P)x=TPx+0=TPx\qquad(x\in F).
\]
Passing to the operator-norm limits gives $PR=RP$ and $PL=LP$.
Since $R_{h_0}+L_{h_0}=I$ and $R_{k_n},L_{k_n}\ge0$,
\[
 B\coloneqq R+L
 =I+\sum_{n\ge1}2^{-n}(R_{k_n}+L_{k_n})\ge I.
\]
In particular, $B$ is bounded, positive and invertible, and $PB=BP$.
Define
\begin{equation}\label{eq:renormed-H-inner-product}
 \langle x,y\rangle_H\coloneqq\langle Bx,y\rangle_{H_0}.
\end{equation}
This is an equivalent Hilbert inner product on $F$, since $B$ is bounded
and $B\ge I$. By the definitions of $R$ and $L$, for $x\in F$,
\[
\begin{aligned}
 \left\|x\right\|_H^2
 &\mathrel{
 \overset{\mathclap{\eqref{eq:renormed-H-inner-product}}}{=}}
 \langle Bx,x\rangle_{H_0}
 =\langle Rx,x\rangle_{H_0}+\langle Lx,x\rangle_{H_0}\\
 &\mathrel{
 \overset{\mathclap{\eqref{eq:weighted-form-operators},\,\eqref{eq:weighted-sum-operators}}}{=}}
 \left\|xh_0\right\|_2^2+\left\|h_0x\right\|_2^2
   +\sum_{n\ge1}2^{-n}\bigl(\left\|xk_n\right\|_2^2
                             +\left\|k_nx\right\|_2^2\bigr)\\
 &\mathrel{
 \overset{\mathclap{\eqref{eq:combined-weight-identities}}}{=}}
   \left\|xh\right\|_2^2+\left\|hx\right\|_2^2.
\end{aligned}
\]
Moreover, $B\ge I$ gives
\[
 \left\|x\right\|_{H_0}^2
 =\langle x,x\rangle_{H_0}
 \le\langle Bx,x\rangle_{H_0}=\left\|x\right\|_H^2,
\]
and H\"older's inequality, with $1/2=1/p+1/r$, gives
\[
 \left\|x\right\|_H^2
 =\left\|xh\right\|_2^2+\left\|hx\right\|_2^2
 \le\left\|x\right\|_p^2\left\|h\right\|_r^2
   +\left\|h\right\|_r^2\left\|x\right\|_p^2
 =2\left\|h\right\|_r^2\left\|x\right\|_p^2.
\]
Together with \eqref{eq:initial-weighted-lower-bound}--\eqref{eq:initial-weighted-norm},
these estimates yield
$c_0\left\|x\right\|_p\le\left\|x\right\|_H
\le\sqrt2\left\|h\right\|_r\left\|x\right\|_p$.

Since $PB=BP$, it follows that for all $x,y\in F$,
\[
 \langle Px,y\rangle_H
 =\langle BPx,y\rangle_{H_0}
 =\langle PBx,y\rangle_{H_0}
 =\langle Bx,Py\rangle_{H_0}
 =\langle x,Py\rangle_H.
\]
For $x\in F$ and $z\in H_\infty$, we have $Pz=z$, and hence
\[
 \langle x-Px,z\rangle_H
 =\langle x,z\rangle_H-\langle x,Pz\rangle_H=0.
\]
Thus $Px\in H_\infty$ and $x-Px$ is orthogonal to every vector in
$H_\infty$ with respect to $\langle\cdot,\cdot\rangle_H$.
If $Px=0$, the preceding identity gives
$\langle x,z\rangle_H=0$ for every $z\in H_\infty$;
hence $\ker P\subset H_\infty^{\perp_H}$.
Conversely, if $x\in H_\infty^{\perp_H}$, then $Px=0$.
Thus $\ker P=H_\infty^{\perp_H}$.

For all $x,y\in F$,
\[
\begin{aligned}
 \langle B^{-1}Rx,y\rangle_H
 &\mathrel{
 \overset{\mathclap{\eqref{eq:renormed-H-inner-product}}}{=}}
 \langle BB^{-1}Rx,y\rangle_{H_0}
  =\langle Rx,y\rangle_{H_0}\\
 &\mathrel{
 \overset{\mathclap{\eqref{eq:weighted-sum-operators}}}{=}}
 \langle xh,yh\rangle_2
  =\langle Ax,y\rangle_H.
\end{aligned}
\]
Also, $PB=BP$ implies $PB^{-1}=B^{-1}P$. Consequently,
\begin{equation}\label{eq:reducing-weighted-operator}
 A=B^{-1}R,\qquad
 PA=PB^{-1}R=B^{-1}PR=B^{-1}RP=AP.
\end{equation}
Since $PA=AP$, both $\operatorname{ran}P=H_\infty$ and
$\ker P=H_\infty^{\perp_H}$ are invariant under $A$.
\end{proof}

\subsection{Proof of Theorem~\ref{thm:general-high-p-relative-complement}}
\label{sec:proof-main-theorem}

\begin{proof}
If $F$ is not Hilbertian,
\cite[Corollary~6.3]{RX} gives a subspace
$Z\subset F$ completely isomorphic to $\ell_p$ and a completely
bounded projection $Q:L_p(\mathcal M)\to Z$.
Then $Q|_F:F\to Z$ is the required projection.

Suppose that $F$ is Hilbertian; since $p>2$, $F$ contains no
isomorphic copy of $\ell_p$. Apply
Lemma~\ref{lem:weighted-reducing-hull} and retain its notation
$h$, $H$, $A$, $H_\infty$, and $e=s(h)$.
The restriction $A|_{H_\infty}$ is self-adjoint on the
infinite-dimensional Hilbert space $H_\infty$, so its essential
spectrum is nonempty. Choose
$\lambda\in\sigma_{\mathrm{ess}}(A|_{H_\infty})$.
Apply Lemma~\ref{lem:spectral-analysis} with $K=H_\infty$
to choose $(y_n)\subset H_\infty$ such that
\begin{equation}\label{eq:model-orthonormal-selection}
 \langle y_i,y_j\rangle_H=\delta_{ij},\qquad
 \sum_{n=1}^\infty\left\|(A-\lambda I_H)y_n\right\|_H<\infty,
\end{equation}
and the map is completely bounded:
\[
 \Phi:F\longrightarrow E_\lambda,\qquad
\Phi x\coloneqq(\langle y_n,x\rangle_H)_n.
\]
Here $E_\lambda$ is defined before Lemma~\ref{lem:spectral-analysis}.
Since $(y_n)\subset H_\infty$, the last assertion of
Lemma~\ref{lem:weighted-reducing-hull} gives
$y_n\in eL_p(\mathcal M)e=L_p(e\mathcal M e)$ for every $n$.
Moreover, $s(h)=e$, so $h$ is faithful in $e\mathcal M e$.

\textbf{Case 1:} Suppose first that $0<\lambda<1$.
Orthonormality in \eqref{eq:model-orthonormal-selection}
implies $y_n\rightharpoonup0$ in $H$. Since the inclusion
$H\hookrightarrow L_p(\mathcal M)$ is bounded, $(y_n)$ is bounded
and weakly null in $L_p(\mathcal M)$. Apply
Theorem~\ref{thm:general-all-p-upper} to obtain a subsequence
$(y_{n_k})$ satisfying
\eqref{eq:upper-selection-estimate}. Define
$Q((\alpha_n)_n)\coloneqq(\alpha_{n_k})_k$ and set
\[
 \Phi'\coloneqq Q\Phi:F\longrightarrow C_p\cap R_p,\qquad
 \Phi'x=(\langle y_{n_k},x\rangle_H)_{k\ge1}\quad(x\in F).
\]
For every $m\in\mathbb N$ and every finitely supported family
$(a_n)\subset\mathbb M_m$, we have
\[
 \sum_k a_{n_k}^*a_{n_k}\le\sum_n a_n^*a_n,
 \qquad
 \sum_k a_{n_k}a_{n_k}^*\le\sum_n a_na_n^*.
\]
By \eqref{eq:column-row-amplification} and
\eqref{eq:cb-amplification-criterion}, $Q$ is completely contractive on
each of $C_p$ and $R_p$. By the definition of $C_p\cap R_p$ in
\eqref{eq:intersection-matrix-norm}, $Q$ is also completely contractive
on $C_p\cap R_p$. Thus $\Phi'=Q\Phi$ is completely bounded.
The synthesis map
\[
 T:C_p\cap R_p\longrightarrow F,\qquad
 Te_k\coloneqq y_{n_k}\quad(k\in\mathbb N),
\]
is completely bounded by
Theorem~\ref{thm:general-all-p-upper} and
\eqref{eq:intersection-amplification}.
Indeed, for every
$m,N\in\mathbb N$ and $a\in\mathbb M_m^N$,
\[
\begin{aligned}
 \left\|(\mathrm{id}_{\mathbb M_m}\otimes T)
 \left(\sum_{k=1}^N a_k\otimes e_k\right)\right\|_{S_p^m[F]}
 &=\left\|\sum_{k=1}^N a_k\otimes y_{n_k}\right\|_p\\
 &\overset{\eqref{eq:upper-selection-estimate}}{\le}
 c_p\left(\sup_n\left\|y_n\right\|_p\right)\rho_p(a)\\
 &\overset{\eqref{eq:intersection-amplification}}{\le}
 c_p\left(\sup_n\left\|y_n\right\|_p\right)
 \left\|\sum_{k=1}^N a_k\otimes e_k\right\|_{S_p^m[C_p\cap R_p]}.
\end{aligned}
\]
The orthonormality of
$(y_{n_k})$ in $H$, given by \eqref{eq:model-orthonormal-selection}, yields
$\Phi'Te_k=(\langle y_{n_j},y_{n_k}\rangle_H)_j=e_k$;
hence $\Phi'T=I_{C_p\cap R_p}$, where
$I_{C_p\cap R_p}$ denotes the identity operator on $C_p\cap R_p$. The space
$Z\coloneqq T(C_p\cap R_p)$ is completely isomorphic to $C_p\cap R_p$
with completely bounded inverse $\Phi'|_Z$.
The map $P\coloneqq T\Phi':F\to F$ satisfies
$P^2=P$, $P(F)=Z$ and $\left\|P\right\|_{\mathrm{cb}}<\infty$.

\textbf{Case 2:} For the cases $\lambda=0,1$, set
$\mathcal N\coloneqq e\mathcal M e$.
By the norm equivalence in Lemma~\ref{lem:weighted-reducing-hull},
\[
 c\left\|y_n\right\|_p\le\left\|y_n\right\|_H=1\qquad(n\in\mathbb N).
\]
Thus $(y_n)$ is bounded in $L_p(\mathcal N)$.
Fix a free
ultrafilter $\mathcal U$ on $\mathbb N$.
We use the compatible realizations of $L_q(\mathcal N)_{\mathcal U}$,
$q=p,r,2$, where $r=2p/(p-2)$, as noncommutative $L_q$-spaces over a common von Neumann
algebra $\mathcal A_{\mathcal U}$; see
\cite[Section~1]{RX}. For a bounded sequence
$(u_n)$ in $L_q(\mathcal N)$, write $(u_n)^\bullet$ for its
ultrapower class, so that
\[
 \left\|(u_n)^\bullet\right\|_q
 =\lim_{n\to\mathcal U}\left\|u_n\right\|_q,
 \qquad \widehat x\coloneqq(x,x,\ldots)^\bullet.
\]
The support of the diagonal copy
$\{\widehat{x}:x\in L_p(\mathcal N)\}$
of $L_p(\mathcal N)$ is
\[
 s_{\mathrm{diag}}\coloneqq
 \bigvee_{x\in L_p(\mathcal N)}
 \bigl(s(|\widehat x|)\vee s(|\widehat x^*|)\bigr)
 \in\mathcal A_{\mathcal U}.
\]
Since $h$ is faithful in $\mathcal N$,
\cite[Section~4, immediately before Theorem~4.6]{RX}
gives
\begin{equation}\label{eq:diagonal-support}
 s(\widehat h)=s_{\mathrm{diag}}.
\end{equation}

For $\lambda\in\{0,1\}$, we have
\[
 \begin{aligned}
 \lambda=1:\quad
 \left\|hy_n\right\|_2^2
 &\mathrel{
   \overset{\eqref{eq:spectral-left-form}}{=}}
 \langle(I_H-A)y_n,y_n\rangle_H
 \mathrel{
   \overset{\eqref{eq:model-orthonormal-selection}}{\le}}
 \left\|(I_H-A)y_n\right\|_H\longrightarrow0,\\
 \lambda=0:\quad
 \left\|y_nh\right\|_2^2
 &\mathrel{
   \overset{\eqref{eq:spectral-right-form}}{=}}
 \langle Ay_n,y_n\rangle_H
 \mathrel{
   \overset{\eqref{eq:model-orthonormal-selection}}{\le}}
 \left\|Ay_n\right\|_H\longrightarrow0.
 \end{aligned}
\]
Put $\widetilde y\coloneqq(y_n)^\bullet$.
By \eqref{eq:diagonal-support}, for every $x\in L_p(\mathcal N)$ we have
\[
 \begin{aligned}
 \lambda=1:\quad
 \widehat h\widetilde y=(hy_n)^\bullet=0
 &\ \Longrightarrow\ s_{\mathrm{diag}}\widetilde y=0
 \ \Longrightarrow\ \widehat x^*\widetilde y=0,\\
 \lambda=0:\quad
 \widetilde y\widehat h=(y_nh)^\bullet=0
 &\ \Longrightarrow\ \widetilde y s_{\mathrm{diag}}=0
 \ \Longrightarrow\ \widetilde y\widehat x^*=0.
 \end{aligned}
\]
Thus $\widetilde y$ is left disjoint from the diagonal copy
$\{\widehat{x}:x\in L_p(\mathcal N)\}$ of
$L_p(\mathcal N)$ when $\lambda=1$, and right disjoint when $\lambda=0$.
By \cite[Remarks~2.8(i)]{RX}, after passing to a subsequence there are
$z_n\in L_p(\mathcal N)$ such that $\left\|y_n-z_n\right\|_p\to0$ and the
corresponding one-sided products
$z_i^*z_j$ (when $\lambda=1$) or $z_i z_j^*$ (when $\lambda=0$)
vanish for $i\ne j$. Passing to a further subsequence
and relabeling, we obtain
\begin{equation}\label{eq:endpoint-disjoint-approximation}
 \sum_{n=1}^\infty\left\|y_n-z_n\right\|_p<\infty,
 \qquad
 \begin{cases}
 z_i^*z_j=0,&\lambda=1,\\
 z_i z_j^*=0,&\lambda=0
 \end{cases}
 \quad(i\ne j).
\end{equation}
For a suitable increasing sequence $j_1<j_2<\cdots$, define
$Q((\alpha_n)_n)\coloneqq(\alpha_{j_k})_k$.
Since $E_1=C_p$ and $E_0=R_p$, \eqref{eq:column-row-amplification} and
\eqref{eq:cb-amplification-criterion} give
\[
 \left\|Q:E_\lambda\to E_\lambda\right\|_{\mathrm{cb}}\le1,
 \qquad \left\|Q\Phi\right\|_{\mathrm{cb}}\le\left\|\Phi\right\|_{\mathrm{cb}}<\infty.
\]
We henceforth denote $Q\Phi$ by $\Phi$. For the relabeled sequence $(y_n)$,
this means
\[
 \Phi:F\longrightarrow E_\lambda,\qquad
 \Phi x=(\langle y_n,x\rangle_H)_n\quad(x\in F).
\]

Fix $m,N\in\mathbb N$ and $a_1,\ldots,a_N\in\mathbb M_m$, and put
$X\coloneqq\sum_{n=1}^N a_n\otimes z_n$. Recall that
\begin{equation}\label{eq:endpoint-matrix-norm}
 \left\|\sum_{n=1}^N a_n\otimes e_n\right\|_{S_p^m[E_\lambda]}
 =\begin{cases}
 \displaystyle\left\|\left(\sum_{n=1}^N a_n^*a_n\right)^{1/2}\right\|_{S_p^m},
 &\lambda=1,\\[6pt]
 \displaystyle\left\|\left(\sum_{n=1}^N a_na_n^*\right)^{1/2}\right\|_{S_p^m},
 &\lambda=0.
 \end{cases}
\end{equation}
By \eqref{eq:endpoint-disjoint-approximation},
\[
 \begin{aligned}
 \lambda=1:\quad X^*X&=\sum_{n=1}^N a_n^*a_n\otimes z_n^*z_n,\\
 \lambda=0:\quad XX^*&=\sum_{n=1}^N a_na_n^*\otimes z_nz_n^*.
 \end{aligned}
\]
Applying Lemma~\ref{lem:positive-tensor-high-p} with exponent $p/2$ gives
\begin{equation}\label{eq:endpoint-disjoint-estimate}
 \begin{aligned}
 \left\|X\right\|_p^2
 &\le
 \begin{cases}
 \displaystyle\left\|\sum_{n=1}^N a_n^*a_n\right\|_{S_{p/2}^m}
     \sup_n\left\|z_n^*z_n\right\|_{p/2},&\lambda=1,\\[6pt]
 \displaystyle\left\|\sum_{n=1}^N a_na_n^*\right\|_{S_{p/2}^m}
     \sup_n\left\|z_nz_n^*\right\|_{p/2},&\lambda=0
 \end{cases}\\
 &\overset{\eqref{eq:endpoint-matrix-norm}}{=}
 \left\|\sum_{n=1}^N a_n\otimes e_n\right\|_{S_p^m[E_\lambda]}^2
 \sup_n\left\|z_n\right\|_p^2.
 \end{aligned}
\end{equation}
Since $a_n^*a_n\le\sum_{k=1}^N a_k^*a_k$ and
$a_na_n^*\le\sum_{k=1}^N a_ka_k^*$, \eqref{eq:endpoint-matrix-norm} gives
\[
 \max_{1\le n\le N}\left\|a_n\right\|_{S_p^m}
 \le\left\|\sum_{n=1}^N a_n\otimes e_n\right\|_{S_p^m[E_\lambda]}.
\]
Consequently,
\begin{equation}\label{eq:synthesis-perturbation}
 \begin{aligned}
 \left\|\sum_{n=1}^N a_n\otimes(y_n-z_n)\right\|_p
 &\le\sum_{n=1}^N\left\|a_n\otimes(y_n-z_n)\right\|_p\\
 &\overset{\eqref{eq:elementary-tensor-norm}}{=}
 \sum_{n=1}^N\left\|a_n\right\|_{S_p^m}\left\|y_n-z_n\right\|_p\\
 &\le\left\|\sum_{n=1}^N a_n\otimes e_n\right\|_{S_p^m[E_\lambda]}
       \sum_{n=1}^N\left\|y_n-z_n\right\|_p.
 \end{aligned}
\end{equation}
By the triangle inequality,
\eqref{eq:endpoint-disjoint-estimate} and \eqref{eq:synthesis-perturbation}, the map
\[
 T:E_\lambda\longrightarrow F,\qquad Te_n\coloneqq y_n,
\]
satisfies
\[
 \left\|T\right\|_{\mathrm{cb}}
 \le\sup_n\left\|z_n\right\|_p+\sum_{n=1}^\infty\left\|y_n-z_n\right\|_p<\infty.
\]
By the orthonormality of $(y_n)$ in $H$,
\[
 \Phi Te_k=(\langle y_j,y_k\rangle_H)_j=e_k,
 \qquad \Phi T=I_{E_\lambda}.
\]
Hence $T$ is a complete isomorphism onto the closed subspace
$Z\coloneqq T(E_\lambda)$, with inverse $\Phi|_Z$, and
\[
 P\coloneqq T\Phi:F\longrightarrow F,
 \qquad P^2=P,\qquad P(F)=Z,\qquad\left\|P\right\|_{\mathrm{cb}}<\infty.\qedhere
\]
\end{proof}

\subsection{Proof of Corollary~\ref{cor:finite-models}}
\label{sec:proof-finite-corollary}

\begin{proof}
We first reduce to the case where $\mathcal M$ has a faithful normal
tracial state $\tau$. Choose a separable infinite-dimensional closed
subspace $F_0\subset F$. By \eqref{eq:sigma-finite-support-reduction},
there is a $\sigma$-finite projection $e\in\mathcal M$ such that
$F_0\subset L_p(e\mathcal M e)$, and the finite algebra $e\mathcal M e$
admits a faithful normal tracial state
\cite[Corollary~III.2.5.8]{Blackadar}.
Since $x\mapsto exe$ is completely contractive, any completely bounded
projection $Q_0:L_p(e\mathcal M e)\to Z\subset F_0$ gives the required
projection $x\mapsto Q_0(exe)$ on $L_p(\mathcal M)$.
We may therefore assume that $\tau(\mathbf{1})=1$ and identify
$L_q(\mathcal M)$ with $L_q(\mathcal M,\tau)$.

By Theorem~\ref{thm:general-high-p-relative-complement} and
\cite[Corollary~6.10]{JX}, $F$ contains an infinite-dimensional closed
subspace $Z_0$ completely isomorphic to either $\ell_p$ or
$D\coloneqq C_p\cap R_p$.
If $Z_0$ is completely isomorphic to $\ell_p$, then
\cite[Corollary~6.3]{RX}, applied to $Z_0$, gives a subspace
$Z\subset Z_0$ completely isomorphic to $\ell_p$ and completely
complemented in $L_p(\mathcal M)$.
It remains to treat the case $Z\coloneqq Z_0\cong_{\mathrm{cb}}D$.

Fix a complete isomorphism $T:D\to Z$.
Since $Z$ is Hilbertian and contains no copy of $\ell_p$,
\cite[Theorem~5.1(iii), (v)]{RX}, with $q=2$ and
$h_0=\mathbf{1}$, gives $c>0$ such that
\begin{equation}\label{eq:finite-L2-equivalence}
 c\left\|z\right\|_p\le\left\|z\right\|_2\le\left\|z\right\|_p\qquad(z\in Z).
\end{equation}
Thus $Z$ is closed in $L_2(\mathcal M)$. Let
$P_2:L_2(\mathcal M)\to Z$ be the orthogonal projection, and regard
\[
 W\coloneqq T^{-1}P_2:L_2(\mathcal M)\longrightarrow D
\]
as a bounded operator between Hilbert spaces, with $D$ carrying its
canonical $\ell_2$ inner product. For $u_n\coloneqq W^*e_n$, we have
\[
 \sum_{n=1}^\infty|\langle u_n,\xi\rangle_2|^2
 =\left\|W\xi\right\|_D^2
 \le\left\|W\right\|^2\left\|\xi\right\|_2^2\qquad(\xi\in L_2(\mathcal M)).
\]
Lemma~\ref{lem:weighted-analysis}, applied with $h=\mathbf{1}$,
therefore shows that the map
\[
 Sx\coloneqq Wx=(\langle u_n,x\rangle_2)_n,
 \qquad x\in L_p(\mathcal M),
\]
is completely bounded into both $C_p$ and $R_p$.
By \eqref{eq:intersection-matrix-norm},
$S:L_p(\mathcal M)\to D$ is completely bounded as well.
Since $ST=I_D$, the map
\[
 P\coloneqq TS:L_p(\mathcal M)\longrightarrow Z
\]
is a completely bounded projection onto $Z$.\qedhere
\par
\end{proof}

\end{document}